\theoremstyle{plain}
\newtheorem{theorem}{Theorem}[section]
\newtheorem{proposition}[theorem]{Proposition}
\newtheorem{corollary}[theorem]{Corollary}
\newtheorem{lemma}[theorem]{Lemma}
\theoremstyle{definition}
\theoremstyle{remark}
\DeclareMathOperator{\reg}{reg}
\DeclareMathOperator{\ureg}{ureg}
\DeclareMathOperator{\Pre}{Pre}
\DeclareMathOperator{\id}{id}
\DeclareMathOperator{\Sym}{Sym}
\begin{document}

\title[Semigroups of transformations whose characters belong to...]{Semigroups of transformations whose characters belong to a given semigroup}


\author[Mosarof Sarkar]{\bfseries Mosarof Sarkar}
\address{Department of Mathematics, Central University of South Bihar, Gaya--824236 (Bihar), India}
\email{mosarofsarkar@cusb.ac.in}
\author[Shubh N. Singh]{\bfseries Shubh N. Singh}
\address{Department of Mathematics, Central University of South Bihar, Gaya--824236 (Bihar), India}
\email{shubh@cub.ac.in}

\subjclass[2010]{20M20, 20M17, 20M18.}
\keywords{Transformation semigroups, Regular semigroups, Unit-regular semigroups, Idempotents, Inverse semigroups, Green's relations}


\begin{abstract}
Let $X$ be a nonempty set and $\mathcal{P}=\{X_i\colon i\in I\}$ a partition of $X$. Denote by $T(X)$ the full transformation semigroup on $X$, and
$T(X, \mathcal{P})$ the subsemigroup of $T(X)$ consisting of all transformations that preserve $\mathcal{P}$. For every subsemigroup $\mathbb{S}(I)$ of $T(I)$, let $T_{\mathbb{S}(I)}(X,\mathcal{P})$ be the semigroup of all transformations $f\in T(X, \mathcal{P})$ such that $\chi^{(f)}\in \mathbb{S}(I)$, where $\chi^{(f)}\in T(I)$ defined by $i\chi^{(f)}=j$ whenever $X_if\subseteq X_j$. We describe regular and idempotent elements in $T_{\mathbb{S}(I)}(X,\mathcal{P})$, and determine when
$T_{\mathbb{S}(I)}(X,\mathcal{P})$ is a regular semigroup [inverse semigroup].  With the assumption that $\mathbb{S}(I)$ contains the identity, we characterize Green's relations on $T_{\mathbb{S}(I)}(X,\mathcal{P})$, describe unit-regular elements in $T_{\mathbb{S}(I)}(X,\mathcal{P})$, and determine when $T_{\mathbb{S}(I)}(X,\mathcal{P})$ is a unit-regular semigroup. We apply these general results to obtain more concrete results for $T(X,\mathcal{P})$.
\end{abstract}
\maketitle


\section{Introduction}
Throughout this paper, let $X$ be a nonempty set, let $\mathcal{P} = \{X_i\colon i\in I\}$ be a partition of $X$, and let $E$ be the equivalence relation on $X$ induced by $\mathcal{P}$. Denote by $T(X)$ the full transformation semigroup on $X$. The semigroup $T(X)$ and its subsemigroups have a crucial place in the theory of semigroups, since every semigroup is isomorphic to a subsemigroup of $T(Z)$ for some nonempty set $Z$ \cite[Theorem 1.1.2]{howie95}. In 1994, Pei \cite{pei-sf94} introduced the following subsemigroup of $T(X)$.

\begin{align*}
T(X, \mathcal{P}) &= \{f\in T(X)\colon \forall X_i \in \mathcal{P} ~\exists X_j \in \mathcal{P}, ~X_i f \subseteq X_j\}\\
&= \left\{f\in T(X)\colon \forall x, y\in X, ~(x,y)\in E \Longrightarrow (xf, yf)\in E\right\}.
\end{align*}
In that paper, Pei \cite[Theorem 2.8]{pei-sf94} proved that $T(X, \mathcal{P})$ is the semigroup of all continuous selfmaps on $X$ endowed with the topology having $\mathcal{P}$ as a basis. Since then the semigroup $T(X, \mathcal{P})$ has been extensively studied in the literature; see, for example, \cite{araujo-cps15, araujo-sf09,pei-sf96,pei-sf05, pei-ca05,sanwong-sf20,shubh-ca21,shubh-sf22, shubh-jaa22}.


\vspace{0.05cm}
Let $f\in T(X, \mathcal{P})$. The \emph{character} of $f$ is the transformation $\chi^{(f)}\colon I \to I$ defined by $i\chi^{(f)} = j$ whenever $X_i f \subseteq X_j$. If $X$ is finite, the character of $f\in T(X, \mathcal{P})$ has been studied using the symbol $\bar{f}$ by Ara\'{u}jo et al. \cite{araujo-cps15}, Dolinka and East \cite{east-ca16}, and Dolinka et al. \cite{east-bams16}. The character of $f\in T(X, \mathcal{P})$ for an arbitrary set $X$ was first studied by Purisang and Rakbud \cite{puri-ckms16}.
Let $\mathbb{S}(I)$ be a subsemigroup of $T(I)$. By using the notion of character, Rakbud \cite{rakbud-kmj18} introduced the following subsemigroup of $T(X, \mathcal{P})$:
\[T_{\mathbb{S}(I)}(X,\mathcal{P})=\left\{f\in T(X, \mathcal{P})\colon \chi^{(f)}\in \mathbb{S}(I)\right\}.\]
In \cite{rakbud-ijmm20}, Rakbud and Chaiya studied the semigroup $T_{\mathbb{S}(I)}(X,\mathcal{P})$. Semigroup $T_{\mathbb{S}(I)}(X,\mathcal{P})$ generalizes
the semigroup $T(X, \mathcal{P})$, since $T(X,\mathcal{P}) =T_{T(I)}(X,\mathcal{P})$. The semigroup $T_{\mathbb{S}(I)}(X,\mathcal{P})$ also generalizes both $T(X)$ and $\mathbb{S}(I)$ in the sense that $T_{\mathbb{S}(I)}(X,\mathcal{P})=T(X)$ if $|\mathcal{P}|= 1$, and $T_{\mathbb{S}(I)}(X,\mathcal{P}) \simeq
\mathbb{S}(I)$ if $\mathcal{P}$ consists of singleton sets. (Note that if $\mathcal{P}$ consists of singleton sets, then $|I|=|X|$ and $\mathbb{S}(I) \simeq
\mathbb{S}(X)$.)  For various proper subsemigroups $\mathbb{S}(I)$ of $T(I)$, the semigroup $T_{\mathbb{S}(I)}(X,\mathcal{P})$ has been investigated; see  \cite{deng-bims16,deng-sf10,deng-sf12,puri-ckms16, shubh-bkms23, shubh-ac23, shubh-ca21,shubh-sf22,shubh-jaa22, sun-jaa13}. 


\vspace{0.2mm}

The purpose of this paper is to study the semigroup $T_{\mathbb{S}(I)}(X,\mathcal{P})$, where $\mathbb{S}(I)$ is a general subsemigroup of $T(I)$. The paper is organized as follows. In the next section, we present the definitions and notation
used throughout this paper. In Section 3, we describe regular and idempotent elements in $T_{\mathbb{S}(I)}(X,\mathcal{P})$. We next determine when $T_{\mathbb{S}(I)}(X,\mathcal{P})$ is a regular semigroup, and when $T_{\mathbb{S}(I)}(X,\mathcal{P})$ is an inverse semigroup. Moreover, we prove that $T_{\mathbb{S}(I)}(X,\mathcal{P})$ is a regular semigroup if $\mathbb{S}(I)$ is a subgroup of the symmetric group on $I$, and thus generalize the result given by Purisang and Rakbud in \cite[Theorem 3.5(1)]{puri-ckms16}. With the assumption that $\mathbb{S}(I)$ contains the identity, we describe unit-regular elements in $T_{\mathbb{S}(I)}(X,\mathcal{P})$, and determine when $T_{\mathbb{S}(I)}(X,\mathcal{P})$ is a unit-regular semigroup in Section 4. In Section 5, we characterize Green's relations on $T_{\mathbb{S}(I)}(X,\mathcal{P})$ when $\mathbb{S}(I)$ contains the identity. Using these characterizations, we also obtain concrete and noticeably different descriptions of Green's relations on $T(X,\mathcal{P})$, whose Green's relations are known; see \cite{pei-ca05,pei-ca07}.


\section{Preliminaries and Notation}

Let $X$ be a nonempty set. The cardinality of $X$ is denoted by $|X|$.  For any nonempty set $Y$, we write $X\setminus Y = \{x\in X \colon x\notin Y\}$.
A \emph{partition} $\mathcal{P}$ of $X$ is a collection of nonempty disjoint subsets of $X$, called \emph{blocks} of $\mathcal{P}$, whose union is $X$. A \emph{trivial partition} is a partition that contains only singleton blocks or a single block. A \emph{transversal} of an equivalence relation $\rho$ on $X$ is a subset of $X$ that contains exactly one element from every $\rho$-class. We denote the identity mapping on $X$ by $\id_X$, and the symmetric group on $X$ by $\Sym(X)$.

\vspace{0.05cm}

Denote the composition of mappings by juxtaposition, and compose mappings from left to right. Let $f\colon X \to Y$ be a mapping. We write $xf$ for the image of $x$ under $f$. For a nonempty $A\subseteq X$  (resp. $B\subseteq Y$), we put $Af =\{af\colon a\in A\}$ (resp. $Bf^{-1}=\{x\in X\colon  xf \in B\}$). Furthermore, if $B=\{b\}$, then we write $bf^{-1}$ instead of $\{b\} f^{-1}$.
We let $D(f)= Y\setminus Xf$ and $\textnormal{d}(f)=|D(f)|$. The \emph{kernel} of $f$, denoted by $\ker(f)$, is an equivalence relation on $X$ defined by $\ker(f) = \{(a,b)\in X \times X \colon af = bf\}$. The symbol $\pi(f)$ denotes the partition of $X$ induced by $\ker(f)$, and $T_f$ denotes a transversal of $\ker(f)$. Note that $|X\setminus T_f|$ is independent of the choice of transversal of $\ker(f)$ (cf. \cite[p. 1356]{higgins-rsm98}). We let $\textnormal{c}(f) = |X\setminus T_f|$.


\vspace{0.05cm}

Let $f\in T(X)$. For a nonempty subset $A$ of the domain of $f$, the \emph{restriction} of $f$ to $A$ is the mapping $f_{\upharpoonright A}\colon A\to X$ defined by $x(f_{\upharpoonright A})=xf$ for all $x\in A$. Moreover if $B$ is a subset of the codomain of $f$ such that $Af \subseteq B$, then we use the same notation $f_{\upharpoonright A}$ for the mapping from $A$ to $B$ that assigns $xf$ to each $x\in A$. We say that $f$ \emph{preserves} or \emph{stabilizes} the partition $\mathcal{P}$ if for each $X_j \in \mathcal{P}$, there exists $X_k \in \mathcal{P}$ such that $X_j f\subseteq X_k$. Thus $T(X, \mathcal{P})$ is a subsemigroup of $T(X)$ consisting of all transformations that preserve the partition $\mathcal{P}$.

\vspace{0.05cm}
Let $S$ be a semigroup. An element $a\in S$ is an \textit{idempotent} if $a^2=a$. The set of all idempotents of $S$ is denoted by $E(S)$. It is well-known that $f\in T(X)$ is an idempotent if and only if $f$ acts as the identity map on its image set (cf. \cite[p. 6]{clifford-ams61}). An element $a\in S$ is \textit{regular} in $S$ if there is $b\in S$ such that $aba = a$. Such element $b$, if exists, is called an \emph{inner inverse} of $a$. The set of all inner inverses of $a$ is denoted by $\Pre(a)$, and the set of all regular elements in $S$ is denoted by $\reg(S)$. If $\reg(S) = S$, then we say that $S$ is \emph{regular}. The semigroup $S$ is an \textit{inverse} semigroup if for every $x\in S$, there is a unique $y\in S$ such that $xyx = x$ and $yxy = y$. Equivalently, the semigroup $S$ is an inverse semigroup if and only if $S$ is regular and its idempotents commute (cf. \cite[Theorem 5.1.1]{howie95}).

\vspace{0.05cm}
Let $S$ be a semigroup with identity. Denote by $U(S)$ the set of all units in $S$. Note that $U(T(X))=\Sym(X)$, and write $U\big(T(X,\mathcal{P})\big)=S(X,\mathcal{P})$. Clearly $S(X,\mathcal{P})=T(X,\mathcal{P})\cap \Sym(X)$. An element $a\in S$ is \textit{unit-regular} in $S$ if there is $u\in U(S)$ such that $aua = a$. The set of all unit-regular elements in $S$ is denoted by $\ureg(S)$. If $\ureg(S) = S$, then we say that $S$ is \emph{unit-regular}.

\vspace{0.05cm}

In what follows, the letter $I$ denotes the index set of the partition $\mathcal{P}$, and the symbol $\mathbb{S}(I)$ denotes a subsemigroup of $T(I)$. We refer the reader to \cite{howie95} for any undefined notation and terminology of semigroup theory.

\section{Regular and Inverse semigroups $T_{\mathbb{S}(I)}(X,\mathcal{P})$}

In this section, we give a characterization of regular elements in $T_{\mathbb{S}(I)}(X,\mathcal{P})$ and determine when $T_{\mathbb{S}(I)}(X,\mathcal{P})$ is a regular semigroup. Using these, we obtain known descriptions of regular elements in $T(X,\mathcal{P})$ and regularity of $T(X,\mathcal{P})$. If $\mathbb{S}(I)$ is a subgroup of $\Sym(I)$, we prove that $T_{\mathbb{S}(I)}(X,\mathcal{P})$ is regular, and thus generalize Theorem 3.5(1) in \cite{puri-ckms16}. Next, we give a characterization of idempotents in $T_{\mathbb{S}(I)}(X,\mathcal{P})$ and then determine when $T_{\mathbb{S}(I)}(X,\mathcal{P})$ is an inverse semigroup.

\vspace{0.1cm}
 We begin by recalling a specific case of Lemma 2.3 in \cite{puri-ckms16}.

\begin{lemma}\cite[Lemma 2.3]{puri-ckms16}\label{rak-lemma-2.3}
We have $\chi^{(fg)}=\chi^{(f)}\chi^{(g)}$ for all $f,g\in T(X,\mathcal{P})$.
\end{lemma}


In the next result, we observe that $\mathbb{S}(I)$ is a homomorphic image of $T_{\mathbb{S}(I)}(X,\mathcal{P})$.

\begin{proposition}\label{epimorphism}
The mapping $\varphi \colon T_{\mathbb{S}(I)}(X,\mathcal{P})\to \mathbb{S}(I)$ defined by $f\varphi=\chi^{(f)}$ is an epimorphism. Moreover, if $\id_X\in T_{\mathbb{S}(I)}(X,\mathcal{P})$, then $\id_X\varphi=\id_I$.
\end{proposition}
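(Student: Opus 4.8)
The plan is to verify directly that $\varphi$ is a well-defined semigroup homomorphism, that it is surjective, and finally to check the statement about $\id_X$.

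First I would confirm that $\varphi$ is well-defined: for $f \in T_{\mathbb{S}(I)}(X,\mathcal{P})$ the character $\chi^{(f)}$ is, by definition of the semigroup, an element of $\mathbb{S}(I)$, so $f\varphi = \chi^{(f)}$ indeed lands in the codomain. The homomorphism property is immediate from Lemma~\ref{rak-lemma-2.3}: for $f, g \in T_{\mathbb{S}(I)}(X,\mathcal{P})$ we have $fg \in T_{\mathbb{S}(I)}(X,\mathcal{P})$ (since $\chi^{(fg)} = \chi^{(f)}\chi^{(g)} \in \mathbb{S}(I)$ as $\mathbb{S}(I)$ is a subsemigroup), and then $(fg)\varphi = \chi^{(fg)} = \chi^{(f)}\chi^{(g)} = (f\varphi)(g\varphi)$.

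Next I would prove surjectivity. Given $\sigma \in \mathbb{S}(I)$, I need to exhibit some $f \in T_{\mathbb{S}(I)}(X,\mathcal{P})$ with $\chi^{(f)} = \sigma$. For each $i \in I$, since $X_{i\sigma}$ is nonempty, fix an element $x_i \in X_{i\sigma}$ and define $f$ by sending every element of $X_i$ to $x_i$. Then $X_i f = \{x_i\} \subseteq X_{i\sigma}$, so $f$ preserves $\mathcal{P}$, i.e.\ $f \in T(X,\mathcal{P})$, and moreover $\chi^{(f)} = \sigma \in \mathbb{S}(I)$, whence $f \in T_{\mathbb{S}(I)}(X,\mathcal{P})$ and $f\varphi = \sigma$. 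This shows $\varphi$ is an epimorphism.

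Finally, the moreover clause: if $\id_X \in T_{\mathbb{S}(I)}(X,\mathcal{P})$, then for each $i \in I$ we have $X_i \id_X = X_i \subseteq X_i$, so by definition of the character $i\chi^{(\id_X)} = i$ for all $i \in I$, that is, $\chi^{(\id_X)} = \id_I$, and hence $\id_X \varphi = \id_I$. None of these steps presents a genuine obstacle; the only point requiring a moment's care is the surjectivity construction, where one must make an arbitrary choice of a point in each block $X_{i\sigma}$ (an appeal to the axiom of choice when $I$ is infinite), but this is harmless.
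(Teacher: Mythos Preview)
Your proof is correct and follows essentially the same approach as the paper: both use Lemma~\ref{rak-lemma-2.3} for the homomorphism property, construct a preimage of a given $\sigma\in\mathbb{S}(I)$ by sending each block $X_i$ to a single chosen point of $X_{i\sigma}$, and verify directly that $\chi^{(\id_X)}=\id_I$. The only cosmetic difference is that the paper fixes one point $x_j$ in each block $X_j$ and sends $X_i$ to $x_{i\sigma}$, whereas you index the chosen points by the source block; this makes no substantive difference.
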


\begin{proof}[\textbf{Proof}]
Note from Lemma \ref{rak-lemma-2.3} that $\chi^{(fg)}=\chi^{(f)}\chi^{(g)}$ for all $f,g\in T_{\mathbb{S}(I)}(X,\mathcal{P})$. Then we see that $\varphi \colon T_{\mathbb{S}(I)}(X,\mathcal{P})\to \mathbb{S}(I)$ is a homomorphism, since
\[(fg)\varphi=\chi^{(fg)}=\chi^{(f)}\chi^{(g)}=(f\varphi)(g\varphi)\]
for all $f,g\in T_{\mathbb{S}(I)}(X,\mathcal{P})$. It remains to prove that $\varphi$ is surjective. For this, let $\alpha\in \mathbb{S}(I)$. Fix $x_i\in X_i$ for each $i\in I$, and define $h\in T(X)$ by $xh=x_{i\alpha}$ whenever $x\in X_i$ for some $i\in I$. Clearly $X_i h \subseteq X_{i\alpha}$, and so $\chi^{(h)}=\alpha$. This gives $h\in T_{\mathbb{S}(I)}(X,\mathcal{P})$. We also obtain $h\varphi =\chi^{(h)}=\alpha$ as required. If $\id_X\in T_{\mathbb{S}(I)}(X,\mathcal{P})$, then we get $\id_X\varphi=\chi^{(\id_X)}=\id_I$.	
\end{proof}


The following result gives a characterization of regular elements in $T_{\mathbb{S}(I)}(X,\mathcal{P})$.

\begin{theorem}\label{regular-element}
Let $f\in T_{\mathbb{S}(I)}(X,\mathcal{P})$. Then $f\in \reg\big(T_{\mathbb{S}(I)}(X,\mathcal{P})\big)$ if and only if there exists $\alpha\in \mathbb{S}(I)$ such that:
\begin{enumerate}
	\item[\rm(i)] $\chi^{(f)}\alpha\chi^{(f)}=\chi^{(f)}$;
	\item[\rm(ii)] $X_i\cap Xf\subseteq X_{i\alpha}f$ for all $i\in I\chi^{(f)}$. 
\end{enumerate}	
\end{theorem}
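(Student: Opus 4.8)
The plan is to prove the two directions separately, using the epimorphism $\varphi$ from Proposition \ref{epimorphism} together with a careful analysis of how an inner inverse of $f$ interacts with the block structure.

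\medskip

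\textbf{Necessity.} Suppose $f\in\reg\big(T_{\mathbb{S}(I)}(X,\mathcal{P})\big)$, so there is $g\in T_{\mathbb{S}(I)}(X,\mathcal{P})$ with $fgf=f$. Set $\alpha=\chi^{(g)}\in\mathbb{S}(I)$. Applying the homomorphism $\varphi$ (equivalently, Lemma \ref{rak-lemma-2.3}) to $fgf=f$ immediately yields $\chi^{(f)}\alpha\chi^{(f)}=\chi^{(f)}$, which is (i). For (ii), I would take $i\in I\chi^{(f)}$, say $i=k\chi^{(f)}$ for some $k\in I$, and fix an arbitrary $x\in X_i\cap Xf$. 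Write $x=yf$ with $y\in X$; then $y\in X_j$ for some $j$ with $j\chi^{(f)}=i$. Since $fgf=f$, we have $x=yf=y(fgf)=(yf)(gf)=x(gf)$. Now $x\in X_i$ forces $xg\in X_{i\alpha}$ (by definition of $\chi^{(g)}=\alpha$), and then $x=x(gf)=(xg)f\in X_{i\alpha}f$. Hence $X_i\cap Xf\subseteq X_{i\alpha}f$, giving (ii).

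\medskip

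\textbf{Sufficiency.} This is the direction I expect to be the main obstacle: given $\alpha\in\mathbb{S}(I)$ satisfying (i) and (ii), I must build an explicit inner inverse $g$ of $f$ lying in $T_{\mathbb{S}(I)}(X,\mathcal{P})$, i.e.\ with $\chi^{(g)}=\alpha$ (or at least with $\chi^{(g)}\in\mathbb{S}(I)$; taking $\chi^{(g)}=\alpha$ is cleanest). The construction will be a block-wise one. For each $i\in I\chi^{(f)}$, condition (ii) says every point of $X_i\cap Xf$ has an $f$-preimage inside the block $X_{i\alpha}$; so I would choose, for each such $x\in X_i\cap Xf$, a point $x\sigma\in X_{i\alpha}$ with $(x\sigma)f=x$ (a partial section of $f$ landing in $X_{i\alpha}$). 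The map $g$ is then defined on $X$ by: if $x\in X_i$ for some $i\in I$, send $x$ into $X_{i\alpha}$, using $x\mapsto x\sigma$ when $x\in X_i\cap Xf$, and sending the remaining points of $X_i$ to an arbitrary but fixed element of $X_{i\alpha}$ (well-defined since condition (i) guarantees that whenever $i\in I\chi^{(f)}$, the block $X_{i\alpha}$ is itself mapped by $f$ back into $X_i$, so at least the chosen section points make sense; for $i\notin I\chi^{(f)}$ we just need $X_{i\alpha}$ nonempty, which it is). By construction $X_ig\subseteq X_{i\alpha}$ for all $i$, so $g\in T(X,\mathcal{P})$ and $\chi^{(g)}=\alpha\in\mathbb{S}(I)$, whence $g\in T_{\mathbb{S}(I)}(X,\mathcal{P})$.

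\medskip

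It remains to verify $fgf=f$. Take any $y\in X$, say $y\in X_j$, and put $i=j\chi^{(f)}$, so $yf\in X_i\cap Xf$ and $i\in I\chi^{(f)}$. Then $(yf)g=(yf)\sigma$ is the chosen section point in $X_{i\alpha}$ with $\big((yf)\sigma\big)f=yf$. Therefore $y(fgf)=\big((yf)g\big)f=\big((yf)\sigma\big)f=yf$, so $fgf=f$ and $f$ is regular in $T_{\mathbb{S}(I)}(X,\mathcal{P})$. The subtle points to get right in the writeup are: checking that the section $\sigma$ can be chosen coherently (it can, pointwise, since (ii) supplies a preimage for each $x$ individually), confirming that $X_{i\alpha}$ is always nonempty so the "arbitrary element" choices make sense, and noting that condition (i) is exactly what is needed so that this $\alpha$ is a legitimate character of an element of $\mathbb{S}(I)$ and simultaneously is consistent with $g$ being an inner inverse at the level of characters.
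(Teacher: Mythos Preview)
Your proof is correct and follows essentially the same approach as the paper: in both directions you use the character homomorphism (Lemma~\ref{rak-lemma-2.3}) to handle condition~(i), and for sufficiency you construct $g$ block-wise by choosing, for each $x\in X_i\cap Xf$ with $i\in I\chi^{(f)}$, an $f$-preimage in $X_{i\alpha}$ (the paper's $x'$, your $x\sigma$) and sending all other points of $X_i$ to a fixed element of $X_{i\alpha}$. The paper separates the non-image points into two explicit cases ($X_i\setminus Xf$ for $i\in I\chi^{(f)}$ versus $X_j$ for $j\notin I\chi^{(f)}$), whereas you treat them uniformly, but this is a purely cosmetic difference.
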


\begin{proof}[\textbf{Proof}]
Assume that $f\in \reg\big(T_{\mathbb{S}(I)}(X,\mathcal{P})\big)$. Then there exists $g\in T_{\mathbb{S}(I)}(X,\mathcal{P})$ such that $fgf=f$. This yields $\chi^{(f)}\chi^{(g)}\chi^{(f)}=\chi^{(f)}$ by Lemma \ref{rak-lemma-2.3}. Note that $\chi^{(g)}\in \mathbb{S}(I)$, and write $\chi^{(g)}=\alpha$. Then $\chi^{(f)}\alpha\chi^{(f)}=\chi^{(f)}$, and so \rm(i) holds.

\vspace{0.2mm}
To prove \rm(ii), let $i\in I\chi^{(f)}$ and $y\in X_i\cap Xf$. Then, since $y\in Xf$, there exists $x\in X$ such that $xf=y$. Also, since $y\in X_i$ and $\chi^{(g)} = \alpha$, we have $yg\in X_{i\alpha}$. Therefore, since $f = fgf$, we obtain $y=xf=(xf)gf=(yg)f\in X_{i\alpha}f$. Thus $X_i\cap Xf\subseteq X_jf$, and so \rm(ii) holds.

\vspace{0.2mm}
Conversely, assume that the given conditions hold. To prove $f\in \reg\big(T_{\mathbb{S}(I)}(X,\mathcal{P})\big)$, we will construct $g\in T_{\mathbb{S}(I)}(X,\mathcal{P})$ such that $fgf = f$. Let $i\in I\chi^{(f)}$. For each $x\in X_i\cap Xf$, by \rm(ii) there exists $x'\in X_{i\alpha}$ such that $x=x'f$. For each $j\in I$, we fix $x_j\in X_j$. Define $g\in T(X)$ by:
\begin{eqnarray*}
xg=
\begin{cases}
	x'  & \text{if $x\in X_i\cap Xf$, where $i\in I\chi^{(f)}$};\\
	x_{i\alpha} & \text{if $x\in X_i\setminus Xf$, where $i\in I\chi^{(f)}$};\\
	x_{j\alpha} & \text{if $x\in X_j$, where $j\in I\setminus I\chi^{(f)}$}.
\end{cases}
\end{eqnarray*}
Clearly $\chi^{(g)}=\alpha$. Since $\alpha \in \mathbb{S}(I)$, it follows that $g\in T_{\mathbb{S}(I)}(X,\mathcal{P})$. It is routine to verify that $fgf=f$, and hence $f\in \reg\big(T_{\mathbb{S}(I)}(X,\mathcal{P})\big)$.
\end{proof}		
	

Using \cite[Theorem 3.1]{magill-cjm74}, Pei \cite[Corollary 2.3]{pei-ca05} obtained the following result that characterizes regular elements in $T(X,\mathcal{P})$. Using Theorem \ref{regular-element}, we give an alternative proof of the same result.

\begin{proposition}
Let $f\in T(X,\mathcal{P})$. Then $f\in \reg\big(T(X,\mathcal{P})\big)$ if and only if for every $i\in I$, there exists $j\in I$ such that $X_i\cap Xf\subseteq X_jf$.
\end{proposition}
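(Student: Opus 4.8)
The plan is to deduce this from Theorem~\ref{regular-element} by specializing to $\mathbb{S}(I)=T(I)$, which is legitimate since $T(X,\mathcal{P})=T_{T(I)}(X,\mathcal{P})$ and hence $\reg\big(T(X,\mathcal{P})\big)=\reg\big(T_{T(I)}(X,\mathcal{P})\big)$. When $\mathbb{S}(I)=T(I)$, condition~(i) of Theorem~\ref{regular-element}, namely the existence of some $\alpha\in T(I)$ with $\chi^{(f)}\alpha\chi^{(f)}=\chi^{(f)}$, is automatic: every element of $T(I)$ is regular (one checks $\chi^{(f)}$ is regular in $T(I)$ directly, or cites that $T(I)$ is a regular semigroup). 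So for $T(X,\mathcal{P})$ the only real content is condition~(ii).

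For the forward direction, I would take $f\in\reg\big(T(X,\mathcal{P})\big)$, apply Theorem~\ref{regular-element} to obtain $\alpha\in T(I)$ satisfying (i) and (ii), and then observe that for every $i\in I$ we can produce the required $j$. If $i\in I\chi^{(f)}$, condition~(ii) directly gives $X_i\cap Xf\subseteq X_{i\alpha}f$, so $j=i\alpha$ works. If $i\notin I\chi^{(f)}$, then $X_i\cap Xf=\emptyset$ (since $Xf\subseteq\bigcup_{k\in I\chi^{(f)}}X_k$, because every block $X_k f$ lands inside $X_{k\chi^{(f)}}$ and so $Xf$ meets only blocks indexed by $I\chi^{(f)}$), hence the inclusion $X_i\cap Xf\subseteq X_jf$ holds trivially for any $j$. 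This establishes the stated condition for all $i\in I$.

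For the converse, suppose that for every $i\in I$ there is $j\in I$ with $X_i\cap Xf\subseteq X_jf$. I want to build $\alpha\in T(I)$ meeting (i) and (ii). For $i\in I\chi^{(f)}$, pick such a $j$ and set $i\alpha=j$; for $i\notin I\chi^{(f)}$, define $i\alpha$ arbitrarily, say $i\alpha=i$ (or anything in $I$ — there is no constraint since $X_i\cap Xf=\emptyset$ in that case, so (ii) is vacuous there). Then (ii) holds by construction. For (i): the equation $\chi^{(f)}\alpha\chi^{(f)}=\chi^{(f)}$ only needs to be checked on the image $I\chi^{(f)}$ of the first factor, i.e.\ I need $k\alpha\chi^{(f)}=k$ for all $k\in I\chi^{(f)}$. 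Here the key observation is that for $k=k'\chi^{(f)}\in I\chi^{(f)}$, the block $X_{k'}f$ is a nonempty subset of both $X_k$ and $Xf$, so $X_k\cap Xf\neq\emptyset$; choosing $y$ in this intersection and writing $y=xf$, we have $yg$-type reasoning: $y\in X_k\cap Xf\subseteq X_{k\alpha}f$, so there is $x'\in X_{k\alpha}$ with $y=x'f$, whence $X_{k\alpha}f\ni y\in X_k$, forcing $X_{k\alpha}f\subseteq X_k$ (blocks are mapped into single blocks) and therefore $(k\alpha)\chi^{(f)}=k$. This is exactly (i). Since $\alpha\in T(I)=\mathbb{S}(I)$ trivially, Theorem~\ref{regular-element} yields $f\in\reg\big(T(X,\mathcal{P})\big)$.

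The only mildly delicate point — and the step I would be most careful about — is verifying (i) in the converse direction, specifically the inference that $X_{k\alpha}f\subseteq X_k$ (hence $(k\alpha)\chi^{(f)}=k$) from the single witness $y\in X_k\cap X_{k\alpha}f$; this uses that $f\in T(X,\mathcal{P})$ so $X_{k\alpha}f$ lies entirely in one block, which must then be $X_k$. Everything else is bookkeeping about which blocks $Xf$ can meet. I should also state clearly at the outset that the regularity of $\chi^{(f)}$ in $T(I)$ is what makes (i) free when $\mathbb{S}(I)=T(I)$, so that the proposition is genuinely just the ``(ii)-part'' of Theorem~\ref{regular-element}.
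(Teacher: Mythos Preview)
Your proposal is correct and follows essentially the same route as the paper: both directions are deduced from Theorem~\ref{regular-element} with $\mathbb{S}(I)=T(I)$, handling $i\notin I\chi^{(f)}$ trivially and building $\alpha$ from the chosen $j$'s to verify conditions~(i)--(ii). Your justification that $(k\alpha)\chi^{(f)}=k$ for $k\in I\chi^{(f)}$ is in fact more explicit than the paper's, which simply asserts the equality $j_\ell\chi^{(f)}=\ell$ without spelling out the block argument you give.
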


\begin{proof}[\textbf{Proof}]
Assume that $f\in \reg\big(T(X,\mathcal{P})\big)$, and let $i\in I$. If $i\in I\setminus I\chi^{(f)}$, then we get $X_i\cap Xf=\varnothing$, and so $X_i\cap Xf\subseteq X_jf$ for all $j\in I$. Assume that $i\in I\chi^{(f)}$. By Theorem \ref{regular-element}, since $T(X,\mathcal{P})=T_{T(I)}(X,\mathcal{P})$, there exists $\alpha \in T(I)$ such that $X_i\cap Xf\subseteq X_{i\alpha}f$. Thus we conclude that $X_i\cap Xf\subseteq X_jf$ for some $j\in I$.

\vspace{0.2mm}
Conversely, assume that the given condition holds. To prove $f\in \reg\big(T(X,\mathcal{P})\big)$, since $T(X,\mathcal{P}) = T_{T(I)}(X,\mathcal{P})$, we will construct $\alpha \in T(I)$ such that each condition of Theorem \ref{regular-element} holds. For each $i\in I$, by hypothesis we fix $j_i \in I$ such that $X_i\cap Xf\subseteq X_{j_i}f$. Consider $\alpha\in T(I)$ defined by $i\alpha=j_i$. We now show that $\chi^{(f)}\alpha\chi^{(f)}=\chi^{(f)}$. Let $k\in I$ and write $k\chi^{(f)}=\ell$. Then \[k\left(\chi^{(f)}\alpha\chi^{(f)}\right)= \left(k\chi^{(f)}\right)\alpha\chi^{(f)}=(\ell\alpha)\chi^{(f)}=j_{\ell}\chi^{(f)}=\ell=k\chi^{(f)},\]
and so $\chi^{(f)}\alpha\chi^{(f)}=\chi^{(f)}$. Thus the condition \rm(i) of Theorem \ref{regular-element} holds.

\vspace{0.2mm}
Next, let $i\in I\chi^{(f)}$. Then $i\alpha=j_i$ by the definition of $\alpha$. Therefore, by hypothesis and the choice of $j_i$, we get $X_i\cap Xf\subseteq X_{i\alpha}f$. Therefore the condition \rm(ii) of Theorem \ref{regular-element} holds.

\vspace{0.2mm}
Thus, since $T_{T(I)}(X,\mathcal{P}) = T(X,\mathcal{P})$, we conclude from Theorem \ref{regular-element} that $f\in \reg\big(T(X,\mathcal{P})\big)$.
\end{proof}

Note that the regularity of $\mathbb{S}(I)$ may not imply the regularity of $T_{\mathbb{S}(I)}(X,\mathcal{P})$ (cf. \cite[p. 631]{rakbud-kmj18}).
In the next result, we determine when $T_{\mathbb{S}(I)}(X,\mathcal{P})$ is a regular semigroup.

\begin{theorem}\label{regular-semigroup}
The semigroup $T_{\mathbb{S}(I)}(X,\mathcal{P})$ is regular if and only if:
\begin{enumerate}
	\item[\rm(i)] $\mathbb{S}(I)$ is regular;
	\item[\rm(ii)] for each $\alpha\in \mathbb{S}(I)$ if there exists $i\in I$ such that $|i\alpha^{-1}|\geq 2$, then $|X_i|=1$.
\end{enumerate}
\end{theorem}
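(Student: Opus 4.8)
The plan is to combine the epimorphism $\varphi$ from Proposition \ref{epimorphism} with the description of regular elements in Theorem \ref{regular-element}, treating the two implications separately.

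For the ``only if'' direction, suppose $T_{\mathbb{S}(I)}(X,\mathcal{P})$ is regular. Then (i) is immediate, since $\mathbb{S}(I)=T_{\mathbb{S}(I)}(X,\mathcal{P})\varphi$ is a homomorphic image of a regular semigroup and hence regular. For (ii) I would argue by contradiction: assume some $\alpha\in\mathbb{S}(I)$ admits $i\in I$ with $|i\alpha^{-1}|\geq 2$ while $|X_i|\geq 2$. Choose distinct $j_1,j_2\in i\alpha^{-1}$ and distinct $a,b\in X_i$, and build $f\in T(X)$ by sending all of $X_{j_1}$ to $a$, all of $X_{j_2}$ to $b$, all of $X_k$ to $a$ for every other $k$ with $k\alpha=i$, and all of $X_k$ to a fixed point of $X_{k\alpha}$ for every $k$ with $k\alpha\neq i$. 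Then $X_kf\subseteq X_{k\alpha}$ for all $k$, so $\chi^{(f)}=\alpha\in\mathbb{S}(I)$ and $f\in T_{\mathbb{S}(I)}(X,\mathcal{P})$; moreover $X_i\cap Xf=\{a,b\}$, and for each $k$ the set $X_kf$ meets $X_i$ only if $k\in i\alpha^{-1}$, in which case $X_kf$ is one of the singletons $\{a\},\{b\}$. Hence no single block-image $X_kf$ contains $\{a,b\}$. Since $i\in I\alpha=I\chi^{(f)}$, condition (ii) of Theorem \ref{regular-element} cannot be satisfied at the index $i$ for any $\beta\in\mathbb{S}(I)$ (it would require $\{a,b\}\subseteq X_{i\beta}f$), so $f\notin\reg\big(T_{\mathbb{S}(I)}(X,\mathcal{P})\big)$, a contradiction. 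Therefore $|X_i|=1$.

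For the ``if'' direction, assume (i) and (ii) and let $f\in T_{\mathbb{S}(I)}(X,\mathcal{P})$ be arbitrary; it suffices to exhibit $\alpha\in\mathbb{S}(I)$ meeting both conditions of Theorem \ref{regular-element}. Since $\chi^{(f)}\in\mathbb{S}(I)$ and $\mathbb{S}(I)$ is regular, pick $\alpha\in\mathbb{S}(I)$ with $\chi^{(f)}\alpha\chi^{(f)}=\chi^{(f)}$, which is condition (i). For condition (ii), fix $\ell\in I\chi^{(f)}$; writing $\ell=m\chi^{(f)}$ and using $\chi^{(f)}\alpha\chi^{(f)}=\chi^{(f)}$ gives $(\ell\alpha)\chi^{(f)}=\ell$, so $\ell\alpha\in\ell(\chi^{(f)})^{-1}$ and $X_{\ell\alpha}f\subseteq X_\ell$. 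If $|\ell(\chi^{(f)})^{-1}|=1$, then every $x$ with $xf\in X_\ell$ lies in $X_{\ell\alpha}$, whence $X_\ell\cap Xf\subseteq X_{\ell\alpha}f$. If $|\ell(\chi^{(f)})^{-1}|\geq 2$, then (ii) applied to $\chi^{(f)}\in\mathbb{S}(I)$ forces $|X_\ell|=1$, so $\varnothing\neq X_{\ell\alpha}f\subseteq X_\ell$ gives $X_{\ell\alpha}f=X_\ell\supseteq X_\ell\cap Xf$. In both cases condition (ii) of Theorem \ref{regular-element} holds, so $f$ is regular, and $T_{\mathbb{S}(I)}(X,\mathcal{P})$ is regular.

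I expect the main obstacle to be the construction of the non-regular element $f$ in the ``only if'' direction: one must distribute the image of $f$ inside the block $X_i$ over two points in a way that cannot be recollected by any single preimage block, while simultaneously keeping $\chi^{(f)}$ equal to the prescribed $\alpha$ so that $f$ genuinely lies in $T_{\mathbb{S}(I)}(X,\mathcal{P})$. Once this map is pinned down, verifying that it violates Theorem \ref{regular-element} and that the remaining steps go through is routine bookkeeping with characters.
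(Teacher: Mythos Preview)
Your proposal is correct and follows essentially the same strategy as the paper: derive (i) from the epimorphism $\varphi$, prove (ii) by constructing a map with character $\alpha$ whose image inside $X_i$ splits across two points so that Theorem \ref{regular-element}(ii) must fail, and for the converse pick an inner inverse of $\chi^{(f)}$ in $\mathbb{S}(I)$ and verify Theorem \ref{regular-element} blockwise via the same two-case analysis on $|\ell(\chi^{(f)})^{-1}|$. Your version of the contradiction step is in fact marginally cleaner, since you observe that every block-image $X_kf$ is a singleton, so condition (ii) of Theorem \ref{regular-element} fails for \emph{every} $\beta\in\mathbb{S}(I)$, whereas the paper restricts attention to $\beta\in\Pre(\alpha)$ first; both routes reach the same conclusion.
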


\begin{proof}[\textbf{Proof}]
Assume that $T_{\mathbb{S}(I)}(X,\mathcal{P})$ is regular. Then \rm(i) is true by Proposition \ref{epimorphism} and the fact that any homomorphic image of a regular semigroup is regular (cf. \cite[Lemma 2.4.4]{howie95}).

\vspace{0.2mm}
To prove \rm(ii), let $\alpha \in \mathbb{S}(I)$. If $\mathcal{P}$ is trivial, then there is nothing to prove. Assume that $\mathcal{P}$ is nontrivial. Suppose to the contrary that $|X_j|\geq 2$ for some $j\in I$ with $|j\alpha^{-1}|\geq 2$. Choose distinct $y_1,y_2\in X_j$.
Fix $x_t\in X_t$ for each $t\in I\alpha\setminus \{j\}$ if $I\alpha\setminus \{j\}\neq \varnothing$. We also fix $\ell\in j\alpha^{-1}$. Define $f\in T(X)$ by:

\begin{eqnarray*}
xf=
\begin{cases}
	y_1         &\text{if $x\in X_{\ell}$};\\
	y_2         &\text{if $x\in X_k$, where $k\in j\alpha^{-1}\setminus\{\ell\}$};\\
	x_{i\alpha} &\text{if $x\in X_i$, where $i\in I\setminus j\alpha^{-1}$}.
\end{cases}
\end{eqnarray*}
Clearly $\chi^{(f)}=\alpha$. Since $\alpha \in \mathbb{S}(I)$, it follows that $f\in T_{\mathbb{S}(I)}(X,\mathcal{P})$. Therefore $f\in \reg \big(T_{\mathbb{S}(I)}(X,\mathcal{P})\big)$ by hypothesis. By \rm(i), let $\beta \in \Pre(\alpha)$. Note that $\alpha \beta \alpha = \alpha$ and $j \in I\alpha$, which gives $j\beta\in j\alpha^{-1}$. Therefore either $X_{j\beta}f=\{y_1\}$ or $X_{j\beta}f=\{y_2\}$ by the definition of $f$. However, we have $X_j\cap Xf=\{y_1,y_2\}$, and so $X_j\cap Xf\not\subseteq X_{j\beta}f$. This is a contradiction by Theorem \ref{regular-element}, since $f\in \reg \big(T_{\mathbb{S}(I)}(X,\mathcal{P})\big)$. Thus \rm(ii) holds.

\vspace{0.4mm}
Conversely, assume that the given conditions hold and let $f\in T_{\mathbb{S}(I)}(X,\mathcal{P})$. To prove the desired result, we will show by Theorem \ref{regular-element} that
$f\in \reg(T_{\mathbb{S}(I)}(X,\mathcal{P}))$. Since $f\in T_{\mathbb{S}(I)}(X,\mathcal{P})$, we have $\chi^{(f)}\in \mathbb{S}(I)$.
Then by \rm(i), there exists $\beta\in \mathbb{S}(I)$ such that $\chi^{(f)}\beta\chi^{(f)}=\chi^{(f)}$.

\vspace{0.2mm}
Next, let $i\in I\chi^{(f)}$. Then there are two cases:

\vspace{0.2mm}
\noindent\textbf{Case 1:} Suppose $|i\left(\chi^{(f)}\right)^{-1}|=1$. Write $i\left(\chi^{(f)}\right)^{-1}=\{j\}$. Clearly $X_i\cap Xf=X_jf$. Since $\chi^{(f)}\beta\chi^{(f)}=\chi^{(f)}$ and $j\chi^{(f)}=i$, we see that $i\beta=j$. Therefore $X_i\cap Xf\subseteq X_{i\beta}f$.

\vspace{0.2mm}
\noindent\textbf{Case 2:} Suppose $|i\left(\chi^{(f)}\right)^{-1}|\geq 2$. Then by $\rm(ii)$, we get $|X_i|=1$. Write $i\beta=k$. Then, since $\chi^{(f)}\beta\chi^{(f)}=\chi^{(f)}$, we see that $k\chi^{(f)}=i$. It follows that $X_kf=X_i$ and $X_i\cap Xf=X_i$. Therefore $X_i\cap Xf\subseteq X_{i\beta}f$.

\vspace{0.2mm}
Thus, in either case, we have $X_i\cap Xf\subseteq X_{i\beta}f$. Hence we conclude from Theorem \ref{regular-element} that  $f\in \reg(T_{\mathbb{S}(I)}(X,\mathcal{P}))$.
\end{proof}


If $\mathbb{S}(I) = \Sym(I)$, Purisang and Rakbud \cite[Theorem 3.5(1)]{puri-ckms16} proved that the semigroup $T_{\mathbb{S}(I)}(X,\mathcal{P})$ is regular. Using Theorem \ref{regular-semigroup}, we prove in the following result that $T_{\mathbb{S}(I)}(X,\mathcal{P})$ is regular for every subgroup $\mathbb{S}(I)$ of $\Sym(I)$, and thus
generalizes Theorem 3.5(1) in \cite{puri-ckms16}.

\begin{corollary}\label{group-SI-regular-TSIPX}
If $\mathbb{S}(I)$ is a subgroup of $\Sym(I)$, then $T_{\mathbb{S}(I)}(X,\mathcal{P})$ is regular.
\end{corollary}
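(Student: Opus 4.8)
The plan is to derive this immediately from Theorem \ref{regular-semigroup} by checking that its two conditions hold whenever $\mathbb{S}(I)$ is a subgroup of $\Sym(I)$. For condition (i), I would note that any group is a regular semigroup, since $\alpha\alpha^{-1}\alpha=\alpha$ for every element $\alpha$ of the group; thus the subgroup $\mathbb{S}(I)$ of $\Sym(I)$ is regular.

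For condition (ii), the key observation is that every element of $\mathbb{S}(I)$ lies in $\Sym(I)$ and is therefore a bijection of $I$. Consequently, for each $\alpha\in\mathbb{S}(I)$ and each $i\in I$ we have $|i\alpha^{-1}|=1$, so there is no $i\in I$ with $|i\alpha^{-1}|\ge 2$. Hence the implication in condition (ii) is vacuously satisfied for every $\alpha\in\mathbb{S}(I)$.

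With both conditions of Theorem \ref{regular-semigroup} verified, I would conclude that $T_{\mathbb{S}(I)}(X,\mathcal{P})$ is regular. I do not anticipate any real obstacle here: the corollary is a direct specialization of the theorem, and the only thing to spell out is the vacuous verification of (ii) using injectivity of permutations. It may also be worth remarking explicitly that this recovers \cite[Theorem 3.5(1)]{puri-ckms16} by taking $\mathbb{S}(I)=\Sym(I)$.
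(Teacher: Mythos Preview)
Your proposal is correct and follows essentially the same approach as the paper: both verify the two hypotheses of Theorem~\ref{regular-semigroup}, noting that a subgroup of $\Sym(I)$ is regular and that bijectivity of each $\alpha\in\mathbb{S}(I)$ forces $|i\alpha^{-1}|=1$, so condition~(ii) holds vacuously. Your write-up is slightly more explicit about why groups are regular and about the vacuous nature of~(ii), but the argument is the same.
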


\begin{proof}[\textbf{Proof}]
If $\mathbb{S}(I)$ is a subgroup of $\Sym(I)$, then it is obvious that $\mathbb{S}(I)$ is regular. Also for each $\alpha \in \mathbb{S}(I)$, we get $|i\alpha^{-1}|=1$ for all $i\in I$. Therefore the semigroup $T_{\mathbb{S}(I)}(X,\mathcal{P})$ is regular by Theorem \ref{regular-semigroup}.	
\end{proof}	


Using Theorem \ref{regular-semigroup}, we obtain the following result, which was first appeared in \cite[Proposition 2.4]{pei-ca05}.

\begin{proposition}\label{regular-TXP}
The semigroup $T(X,\mathcal{P})$ is regular if and only if $\mathcal{P}$ is trivial.
\end{proposition}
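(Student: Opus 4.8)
The plan is to derive this directly from Theorem \ref{regular-semigroup} applied to the special case $\mathbb{S}(I) = T(I)$, since $T(X,\mathcal{P}) = T_{T(I)}(X,\mathcal{P})$. In that case Theorem \ref{regular-semigroup} says that $T(X,\mathcal{P})$ is regular if and only if (i) $T(I)$ is regular and (ii) for each $\alpha \in T(I)$, whenever some $i \in I$ satisfies $|i\alpha^{-1}| \geq 2$ we have $|X_i| = 1$. Condition (i) is automatic because $T(I)$ is well known to be a regular semigroup. So the whole statement reduces to showing that condition (ii) holds precisely when $\mathcal{P}$ is trivial.

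For the ``if'' direction, I would split the two meanings of a trivial partition. If $\mathcal{P}$ is a single block then $|I| = 1$, so $|i\alpha^{-1}| = 1$ for every $\alpha \in T(I)$ and every $i \in I$, and (ii) holds vacuously. If instead every block of $\mathcal{P}$ is a singleton, then $|X_i| = 1$ for all $i \in I$, so the conclusion of (ii) always holds. Either way (ii) is satisfied, hence $T(X,\mathcal{P})$ is regular by Theorem \ref{regular-semigroup}.

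For the ``only if'' direction I would argue by contraposition: suppose $\mathcal{P}$ is nontrivial, so $|I| \geq 2$ and there is some block $X_j$ with $|X_j| \geq 2$. Then I exhibit a single witness violating (ii): let $\alpha \in T(I)$ be the constant map with value $j$, which indeed lies in $T(I)$. Now $j\alpha^{-1} = I$ has at least two elements while $|X_j| \geq 2$, so (ii) fails, and Theorem \ref{regular-semigroup} gives that $T(X,\mathcal{P})$ is not regular.

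There is essentially no obstacle here; the only points requiring a little care are handling the two flavors of trivial partition separately in the forward direction and checking that the constant map used in the reverse direction really belongs to $T(I)$ (it does, trivially). All the real content has already been isolated in Theorem \ref{regular-semigroup}, so this proposition is just an instantiation of it.
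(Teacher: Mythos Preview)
Your proposal is correct and follows essentially the same route as the paper: both invoke Theorem \ref{regular-semigroup} with $\mathbb{S}(I)=T(I)$ and use the constant map $\alpha\equiv j$ to witness the failure of condition (ii) when $\mathcal{P}$ is nontrivial. The only cosmetic difference is that for the converse the paper observes directly that a trivial $\mathcal{P}$ gives $T(X,\mathcal{P})=T(X)$ and cites the regularity of $T(X)$, whereas you instead verify condition (ii) of Theorem \ref{regular-semigroup} by a short case split; both are equally straightforward.
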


\begin{proof}[\textbf{Proof}]
Assume that $T(X,\mathcal{P})$ is regular. If $|X|\leq 2$, then there is nothing to prove. Assume that $|X|\geq 3$. Suppose to the contrary that $\mathcal{P}$ is not trivial. Then $|\mathcal{P}|\ge 2$ and $|X_j|\ge 2$ for some $j\in I$.
 Consider the constant mapping $\alpha\in T(I)$ defined by $i\alpha=j$. Since $|\mathcal{P}|\ge 2$, we get $|j\alpha^{-1}|\geq 2$. Therefore, since $T(X,\mathcal{P}) = T_{T(I)}(X,\mathcal{P})$, the semigroup $T(X,\mathcal{P})$ is not regular by Theorem \ref{regular-semigroup}, a contradiction. 

\vspace{0.2mm}
Conversely, assume that $\mathcal{P}$ is trivial. Then $T(X,\mathcal{P}) = T(X)$, and hence $T(X,\mathcal{P})$ is regular by \cite[Exercise 15, p. 63]{howie95}.
\end{proof}

Recall that a \emph{block mapping} is a
mapping whose both domain and codomain are blocks of a partition of a nonempty set (cf. \cite[Definition 5.1]{shubh-ca21}). Since $T_{\mathbb{S}(I)}(X,\mathcal{P})$ is a subsemigroup of $T(X,\mathcal{P})$, the following result is immediate from \cite[Lemma 5.2]{shubh-ca21}.

\begin{lemma}\label{block-lemma}
Let $f\in T(X)$. Then $f\in T_{\mathbb{S}(I)}(X,\mathcal{P})$ if and only if there exists a unique indexed family $B(f,I) = \{f_{\upharpoonright{X_i}}\colon i\in I\}$ of block mappings induced by $f$.
\end{lemma}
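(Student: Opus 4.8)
The plan is to prove Lemma~\ref{block-lemma} by unwinding the definitions on both sides and checking that membership in $T_{\mathbb{S}(I)}(X,\mathcal{P})$ is equivalent to the existence and uniqueness of the indexed family $B(f,I)$. Since the paper already cites \cite[Lemma 5.2]{shubh-ca21} for the analogous statement about $T(X,\mathcal{P})$, the honest approach is to invoke that result and then add the single extra observation about characters. Concretely, first I would recall that $f\in T(X,\mathcal{P})$ if and only if for each $i\in I$ there is some $k\in I$ with $X_if\subseteq X_k$; by \cite[Lemma 5.2]{shubh-ca21} this is exactly the condition that yields a well-defined restriction $f_{\upharpoonright X_i}\colon X_i\to X_k$ for each $i$, and collecting these gives the family $B(f,I)=\{f_{\upharpoonright X_i}\colon i\in I\}$ of block mappings; uniqueness is automatic because each $f_{\upharpoonright X_i}$ is literally the restriction of the fixed map $f$, and the target block $X_k$ containing $X_if$ is unique since the blocks of $\mathcal{P}$ are pairwise disjoint and nonempty.

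Next I would bring in the character. For $f\in T(X,\mathcal{P})$ the transformation $\chi^{(f)}\in T(I)$ is defined by $i\chi^{(f)}=j$ whenever $X_if\subseteq X_j$, so the codomain of the block mapping $f_{\upharpoonright X_i}$ is precisely $X_{i\chi^{(f)}}$. Thus the family $B(f,I)$ and the character $\chi^{(f)}$ carry the same combinatorial data: specifying $B(f,I)$ is the same as specifying, for each $i$, a block mapping $X_i\to X_{i\chi^{(f)}}$. The point of the lemma for $T_{\mathbb{S}(I)}(X,\mathcal{P})$ is then that $f\in T_{\mathbb{S}(I)}(X,\mathcal{P})$ adds the single constraint $\chi^{(f)}\in\mathbb{S}(I)$ on top of $f\in T(X,\mathcal{P})$, and this constraint is visible in $B(f,I)$ as the requirement that the index map $i\mapsto i\chi^{(f)}$ determined by the codomains of the block mappings lies in $\mathbb{S}(I)$. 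So I would phrase the equivalence as: $f\in T_{\mathbb{S}(I)}(X,\mathcal{P})$ $\iff$ $f\in T(X,\mathcal{P})$ and $\chi^{(f)}\in\mathbb{S}(I)$ $\iff$ there is a (necessarily unique) family $B(f,I)$ of block mappings induced by $f$ whose associated index map belongs to $\mathbb{S}(I)$, the last step being \cite[Lemma 5.2]{shubh-ca21} together with the identification above.

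I do not anticipate a genuine obstacle here: the statement is essentially a bookkeeping reformulation, and all the content has already been established in \cite{shubh-ca21} and in the definitions of Section~2. The one point requiring a little care is making sure the notion of ``block mapping induced by $f$'' is matched correctly with the conventions of \cite[Definition 5.1 and Lemma 5.2]{shubh-ca21} — in particular that the codomain of each induced block mapping is taken to be the block of $\mathcal{P}$ in which $X_if$ sits (which is where the restriction notation $f_{\upharpoonright X_i}$ of Section~2, with its allowed target $B\supseteq X_if$, is used), and that ``unique indexed family'' is interpreted as: the index set is $I$ and the $i$th member is the restriction of $f$ to $X_i$. Once those conventions are pinned down, the proof is the two sentences: apply \cite[Lemma 5.2]{shubh-ca21} to get the equivalence for $T(X,\mathcal{P})$, and observe that the extra hypothesis $\chi^{(f)}\in\mathbb{S}(I)$ defining $T_{\mathbb{S}(I)}(X,\mathcal{P})$ is precisely the condition on the codomains of the block mappings in $B(f,I)$. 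This is why the excerpt labels the result ``immediate''.
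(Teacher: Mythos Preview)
Your approach matches the paper's exactly: the paper gives no proof beyond the one-line remark that, since $T_{\mathbb{S}(I)}(X,\mathcal{P})$ is a subsemigroup of $T(X,\mathcal{P})$, the result is immediate from \cite[Lemma 5.2]{shubh-ca21}. Your additional care about the character condition $\chi^{(f)}\in\mathbb{S}(I)$ actually goes beyond what the paper provides---indeed, as literally stated the ``if'' direction would need that extra clause on the codomains, which the paper's phrasing elides but which you have correctly identified.
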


\vspace{0.1cm}
Note that an element $f\in T(X)$ is an idempotent if and only if $f$ acts as the identity on its image set (cf. \cite[p. 6]{clifford-ams61}). If $X$ is finite, the idempotents of $T(X,\mathcal{P})$ are characterized in \cite[Proposition 3.1]{east-ca16} and \cite[Proposition 3.1]{east-bams16} for a uniform partition $\mathcal{P}$ and a non-uniform partition $\mathcal{P}$, respectively.

\vspace{1.0mm}
The following result describes idempotents of $T_{\mathbb{S}(I)}(X,\mathcal{P})$.

\begin{proposition}\label{idmpotent}
Let $f\in T_{\mathbb{S}(I)}(X,\mathcal{P})$. Then $f\in E\big(T_{\mathbb{S}(I)}(X,\mathcal{P})\big)$ if and only if:
\begin{enumerate}
\item[\rm(i)] $\chi^{(f)}\in E(\mathbb{S}(I))$;
\item[\rm(ii)] $f_{\upharpoonright{X_i}}\in E(T(X_i))$ for all $i\in I\chi^{(f)}$;
\item[\rm(iii)] $X_if\subseteq X_{i\chi^{(f)}}f$ for all $i\in I\setminus I\chi^{(f)}$.
\end{enumerate}
\end{proposition}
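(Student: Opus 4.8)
The plan is to prove this via the standard characterization of idempotents in $T(X)$—namely that $f$ is idempotent iff it fixes every point of its image—combined with the block-mapping decomposition from Lemma~\ref{block-lemma} and the multiplicativity of the character map from Lemma~\ref{rak-lemma-2.3}. For the forward direction, suppose $f\in E\big(T_{\mathbb{S}(I)}(X,\mathcal{P})\big)$, so $f^2=f$. Applying $\varphi$ (Proposition~\ref{epimorphism}) immediately gives $\big(\chi^{(f)}\big)^2=\chi^{(ff)}=\chi^{(f)}$, so $\chi^{(f)}\in E(\mathbb{S}(I))$ and (i) holds. For (ii), fix $i\in I\chi^{(f)}$; since $\chi^{(f)}$ is idempotent, $i\chi^{(f)}=i$, which means $X_if\subseteq X_i$, so $f_{\upharpoonright X_i}$ is genuinely a selfmap of $X_i$, i.e. $f_{\upharpoonright X_i}\in T(X_i)$. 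To see it is idempotent, note that for $x\in X_i$ we have $xf\in X_i$ and then $(xf)f=x f^2=xf$, so $f_{\upharpoonright X_i}$ fixes its image and is idempotent in $T(X_i)$. For (iii), fix $i\in I\setminus I\chi^{(f)}$ and set $j=i\chi^{(f)}$; then $X_if\subseteq X_j$, and since $\chi^{(f)}$ is idempotent, $j\chi^{(f)}=j$, i.e. $X_jf\subseteq X_j$. For any $x\in X_i$, the point $xf$ lies in $X_j$ and in $Xf$, hence $xf$ is fixed by $f$: $xf=(xf)f$. Thus $X_if = (X_if)f \subseteq X_jf$, giving (iii).

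For the converse, assume (i), (ii), (iii) hold; I must show $xf^2=xf$ for every $x\in X$. Take $x\in X$, say $x\in X_i$, and split into two cases according to whether $i\in I\chi^{(f)}$. If $i\in I\chi^{(f)}$, then by (i) $i\chi^{(f)}=i$ so $X_if\subseteq X_i$ and $xf\in X_i$; by (ii) $f_{\upharpoonright X_i}$ is idempotent on $X_i$, so $(xf)f = x(f_{\upharpoonright X_i})^2 = x f_{\upharpoonright X_i} = xf$. If $i\notin I\chi^{(f)}$, write $j=i\chi^{(f)}$; by (iii) we have $xf\in X_if\subseteq X_jf$, so $xf = yf$ for some $y\in X_j$. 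Now $j\in I\chi^{(f)}$ (since $j=i\chi^{(f)}\in I\chi^{(f)}$), and by (i) $j\chi^{(f)}=j$, so the argument of the first case applies to $y\in X_j$: $(yf)f = yf$. Hence $(xf)f = (yf)f = yf = xf$. In both cases $xf^2=xf$, so $f^2=f$, and since $f\in T_{\mathbb{S}(I)}(X,\mathcal{P})$ already, $f\in E\big(T_{\mathbb{S}(I)}(X,\mathcal{P})\big)$.

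The main subtlety—really the only place any care is needed—is condition (iii) and the case $i\notin I\chi^{(f)}$ in the converse: one has to recognize that a block $X_i$ outside the range of the character still has its image landing inside some block $X_j$ that \emph{is} in the range and is fixed by $\chi^{(f)}$, and that the action of $f$ on $X_jf$ being "identity-like" is exactly what propagates to force $xf$ to be a fixed point. The condition $X_if\subseteq X_{i\chi^{(f)}}f$ is precisely the statement that the values produced from $X_i$ coincide with values already produced (and hence fixed) from the stabilized block $X_{i\chi^{(f)}}$. I would also remark at the outset that by Lemma~\ref{block-lemma} the family $\{f_{\upharpoonright X_i}\}$ is well defined, so writing $f_{\upharpoonright X_i}\in E(T(X_i))$ in (ii) makes sense precisely because (i) guarantees $i\chi^{(f)}=i$ for $i\in I\chi^{(f)}$, i.e. $f$ maps $X_i$ into itself; this interplay between (i) and (ii) is worth flagging explicitly so the statement is not vacuous. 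The rest is a routine case check with no further obstacles.
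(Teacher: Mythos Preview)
Your proof is correct and follows essentially the same approach as the paper: both directions rest on the characterization of idempotents in $T(X)$ as maps fixing their image, combined with the multiplicativity of the character from Lemma~\ref{rak-lemma-2.3}. The only cosmetic difference is that in the converse the paper works directly with an arbitrary $y\in Xf$ (noting $y\in X_i$ for some $i\in I\chi^{(f)}$ and then invoking (iii) to get $y\in X_if$), whereas you split into cases on whether the preimage block lies in $I\chi^{(f)}$; the content is the same.
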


\begin{proof}[\textbf{Proof}]
Assume that $f\in E\big(T_{\mathbb{S}(I)}(X,\mathcal{P})\big)$. Then $\chi^{(f)}\in \mathbb{S}(I)$ and $f^2=f$.
\begin{enumerate}
\item[\rm(i)] It is clear that $\chi^{(f)}\in E(\mathbb{S}(I))$, since
$\left(\chi^{(f)}\right)^2=\chi^{(f^2)}= \chi^{(f)}$ by Lemma \ref{rak-lemma-2.3}.

\vspace{0.1cm}	
\item[\rm(ii)] Let $i\in I\chi^{(f)}$, and let $y\in X_if_{\upharpoonright{X_i}}$. Then $i\chi^{(f)}=i$ by $\rm(i)$, and $y\in Xf$. Therefore, since $f$ is idempotent, we obtain $yf_{\upharpoonright{X_i}}=yf=y$. Hence $f_{\upharpoonright{X_i}}\in E(T(X_i))$.

\vspace{0.1cm}	
\item[\rm(iii)] It is straightforward, since $f$ stabilizes $\mathcal{P}$ and $f^2 = f$.
\end{enumerate}	

\vspace{0.2mm}	
Conversely, assume that the given conditions hold. To prove $f\in E\big(T_{\mathbb{S}(I)}(X,\mathcal{P})\big)$, let $y\in Xf$. Then $y\in X_i$ for some $i\in I\chi^{(f)}$. Therefore $i\chi^{(f)}=i$ by \rm(i), and $f_{\upharpoonright{X_i}}\in E(T(X_i))$ by \rm(ii). By \rm(iii), we then see that $y\in X_if_{\upharpoonright{X_i}}$. Thus we obtain $yf=yf_{\upharpoonright{X_i}}=y$. Hence $f\in E\big(T_{\mathbb{S}(I)}(X,\mathcal{P})\big)$.
\end{proof}


Recall that a semigroup is an inverse semigroup if and only if it is regular and its idempotents commute (cf. \cite[Theorem 5.1.1]{howie95}).

\vspace{0.1cm}
The following result determines when $T_{\mathbb{S}(I)}(X,\mathcal{P})$ is an inverse semigroup.

\begin{theorem}\label{inverse-semigroup}
The semigroup $T_{\mathbb{S}(I)}(X,\mathcal{P})$ is an inverse semigroup if and only if:
\begin{enumerate}
\item[\rm(i)] $\mathbb{S}(I)$ is an inverse semigroup;
\item[\rm(ii)] for each $\alpha\in E(\mathbb{S}(I))$, we have $|X_i|=1$ for all $i\in I\alpha$.
\end{enumerate}
\end{theorem}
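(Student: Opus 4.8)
The plan is to invoke the standard criterion that a semigroup is an inverse semigroup exactly when it is regular and its idempotents commute, and to feed into this the tools already in place: Theorem \ref{regular-semigroup} for regularity of $T_{\mathbb{S}(I)}(X,\mathcal{P})$, Proposition \ref{idmpotent} for its idempotents, Lemma \ref{rak-lemma-2.3} for multiplicativity of characters, and Proposition \ref{epimorphism}.

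For the ``only if'' direction, suppose $T_{\mathbb{S}(I)}(X,\mathcal{P})$ is inverse. It is then regular, so Theorem \ref{regular-semigroup} already gives that $\mathbb{S}(I)$ is regular. To see that the idempotents of $\mathbb{S}(I)$ commute, hence that $\mathbb{S}(I)$ is inverse, i.e.\ (i), I would lift idempotents: given $\alpha,\beta\in E(\mathbb{S}(I))$, Proposition \ref{idmpotent} lets me exhibit idempotents $f,g\in T_{\mathbb{S}(I)}(X,\mathcal{P})$ with $\chi^{(f)}=\alpha$ and $\chi^{(g)}=\beta$ (take $f$ to act as $\id_{X_k}$ on each block $X_k$ with $k\in I\alpha$ and as a constant block map onto a point of $X_{k\alpha}$ on the remaining blocks, and similarly for $g$), and then $\alpha\beta=\chi^{(fg)}=\chi^{(gf)}=\beta\alpha$ by Lemma \ref{rak-lemma-2.3}, since $fg=gf$. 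For (ii), I would suppose for contradiction that $\alpha\in E(\mathbb{S}(I))$, $i\in I\alpha$ and $|X_i|\ge 2$, pick distinct $a,b\in X_i$, and use Proposition \ref{idmpotent} to build two idempotents $e,e'$ of $T_{\mathbb{S}(I)}(X,\mathcal{P})$ with character $\alpha$ that agree off $X_i$ and restrict on $X_i$ to the constant maps with values $a$ and $b$; then $ee'$ and $e'e$ restrict on $X_i$ to the constant maps with values $b$ and $a$, so $ee'\ne e'e$, contradicting that idempotents commute in an inverse semigroup.

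For the ``if'' direction, assume (i) and (ii). Regularity will come from Theorem \ref{regular-semigroup}: its condition (i) is immediate, and for its condition (ii), if $\alpha\in\mathbb{S}(I)$ and $i\in I$ with $|i\alpha^{-1}|\ge 2$, then $i\in I\alpha$; writing $\beta\in\mathbb{S}(I)$ for the semigroup inverse of $\alpha$, the idempotent $e:=\beta\alpha$ satisfies $Ie=I\alpha\ni i$ (because $\alpha\beta\alpha=\alpha$ forces $\beta\alpha$ to fix $I\alpha$ pointwise), so $|X_i|=1$ by our hypothesis (ii). It then remains to check that idempotents commute, and here the argument should be short. Let $f,g\in E(T_{\mathbb{S}(I)}(X,\mathcal{P}))$ and put $u=\chi^{(f)}$, $v=\chi^{(g)}$; by Proposition \ref{idmpotent}(i) these lie in $E(\mathbb{S}(I))$, so $uv=vu$ because $\mathbb{S}(I)$ is inverse. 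For $x\in X_k$ we have $x(fg)\in X_{k(uv)}$ and $x(gf)\in X_{k(vu)}$, and $m:=k(uv)=k(vu)=(kv)u$ lies in $Iu$ with $u$ idempotent, so $|X_m|=1$ by hypothesis (ii); thus $x(fg)$ and $x(gf)$ both lie in the singleton block $X_m$ and hence coincide. As $x$ is arbitrary, $fg=gf$, and $T_{\mathbb{S}(I)}(X,\mathcal{P})$ is an inverse semigroup.

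I expect the technical heart of the proof to be the explicit idempotent constructions in the ``only if'' direction: one must remain inside $T_{\mathbb{S}(I)}(X,\mathcal{P})$ (not merely in $T(X)$) with a prescribed character, and conditions (ii)--(iii) of Proposition \ref{idmpotent} interlock — in particular the requirement $X_k f\subseteq X_{k\chi^{(f)}}f$ for $k\notin I\chi^{(f)}$ forces the constant values chosen on the ``collapsing'' blocks to be compatible with the values $a$ and $b$ placed on $X_i$. The ``if'' direction is comparatively painless once one observes that an index of the form $(kv)u$ with $u$ an idempotent of $\mathbb{S}(I)$ is, by hypothesis (ii), forced to label a singleton block.
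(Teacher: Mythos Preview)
Your proposal is correct and follows essentially the same route as the paper. The only noteworthy difference is in establishing (i) of the ``only if'' direction: the paper dispatches this in one line by invoking Proposition \ref{epimorphism} together with the standard fact that a homomorphic image of an inverse semigroup is inverse, whereas you obtain regularity of $\mathbb{S}(I)$ from Theorem \ref{regular-semigroup} and then lift idempotents by hand to check commutativity. Both work; the paper's argument is shorter, yours is more self-contained. For (ii) your idempotents $e,e'$ are precisely the paper's $f,g$ (constant $a$, respectively $b$, on every block indexed by $i\alpha^{-1}$, and a fixed point of $X_{k\alpha}$ elsewhere), and you correctly flag that ``agree off $X_i$'' must be relaxed to ``agree off the blocks indexed by $i\alpha^{-1}$'' in order to satisfy Proposition \ref{idmpotent}(iii); the contradiction $ee'\neq e'e$ survives since it is detected already on $X_i$. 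The ``if'' direction matches the paper almost verbatim: your idempotent $\beta\alpha$ with image $I\alpha$ plays the same role as the paper's idempotent $\beta\in E(\mathbb{S}(I))$ with $I\beta=I\alpha$, and the singleton-block argument for commutation of idempotents is identical.
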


\begin{proof}[\textbf{Proof}]
Assume that $T_{\mathbb{S}(I)}(X,\mathcal{P})$ is an inverse semigroup. Then \rm(i) is true by Proposition \ref{epimorphism} and the fact that any homomorphic image of an inverse semigroup is an inverse semigroup (cf. \cite[Theorem 5.1.4]{howie95}).

\vspace{0.5mm}
To prove \rm(ii), let $\alpha\in E(\mathbb{S}(I))$. If $|X|= 1$, then there is nothing to prove. Assume that $|X|\geq 2$.  Suppose to the contrary that there exists $j\in I\alpha$ such that $|X_j|\geq 2$. Since $\alpha\in E(\mathbb{S}(I))$ and $j\in I\alpha$, we get $j\alpha=j$. Now, choose distinct $y, z \in X_j$. If $I\setminus\{j\}\neq \varnothing$, then we fix $x_i\in X_i$ for each $i\in I\setminus\{j\}$. Define $f,g\in T(X)$ by:

\begin{eqnarray*}
xf=
\begin{cases}
	y           & \text{if $x\in X_i$, where $i\in j\alpha^{-1}$};\\
	x_{i\alpha} &  \text{if $x\in X_i$, where $i\in I\setminus j\alpha^{-1}$}.
\end{cases}
\end{eqnarray*}
and
\begin{eqnarray*}
xg=
\begin{cases}
	z  & \text{if $x\in X_i$, where $i\in j\alpha^{-1}$};\\
	x_{i\alpha} &  \text{if $x\in X_i$, where $i\in I\setminus j\alpha^{-1}$}.
\end{cases}
\end{eqnarray*}
Clearly $\chi^{(f)}=\alpha=\chi^{(g)}$. Since $\alpha \in E(\mathbb{S}(I))$, it follows that $\chi^{(f)}, \chi^{(g)}\in E(\mathbb{S}(I))$. Now, we observe from the definition of $f$ that $f_{\upharpoonright{X_i}}\in E(T(X_i))$ for all $i\in I\alpha$, and $X_if_{\upharpoonright{X_i}}=X_{i\alpha}f_{\upharpoonright{X_{i\alpha}}}$ for all $i\in I\setminus I\alpha$. Therefore $f\in E\big(T_{\mathbb{S}(I)}(X,\mathcal{P})\big)$ by Proposition \ref{idmpotent}. Similarly, we can obtain $g\in E\big(T_{\mathbb{S}(I)}(X,\mathcal{P})\big)$. However, since $y(fg)\neq y(gf)$, we have $fg\neq gf$. This contradicts our assumption that $T_{\mathbb{S}(I)}(X,\mathcal{P})$ is an inverse semigroup. Thus $|X_i|=1$ for all $i\in I\alpha$, and hence \rm(ii) holds.	

\vspace{0.2mm}
Conversely, assume that the given conditions hold. To prove that $T_{\mathbb{S}(I)}(X,\mathcal{P})$ is an inverse semigroup, we will show that
$T_{\mathbb{S}(I)}(X,\mathcal{P})$ is regular and its idempotents commute.

\vspace{0.2mm}
First, we show that $T_{\mathbb{S}(I)}(X,\mathcal{P})$ is regular. By \rm(i), it is clear that $\mathbb{S}(I)$ is regular. Now, let $\alpha\in \mathbb{S}(I)$. Since $\mathbb{S}(I)$ is regular, there exists $\beta\in E(\mathbb{S}(I))$ such that $I\alpha=I\beta$ (cf. \cite[Theorem 2.3.2 and Exercise 16(a) on p. 63]{howie95}). Then by \rm(ii), we have $|X_i|=1$ for all $i\in I\beta$. It follows that $|X_i|=1$ for all $i\in I\alpha$. Thus $T_{\mathbb{S}(I)}(X,\mathcal{P})$ is regular by Theorem \ref{regular-semigroup}.

\vspace{0.2mm}
To prove that idempotents of $T_{\mathbb{S}(I)}(X,\mathcal{P})$ commute, let $f,g\in E\big(T_{\mathbb{S}(I)}(X,\mathcal{P})\big)$ and let $x\in X$. Then $\chi^{(f)},\chi^{(g)}\in E(\mathbb{S}(I))$ by Proposition \ref{idmpotent}, and $x\in X_i$ for some $i\in I$. Write $i\chi^{(fg)}=j$. Observe from \rm(i) and Lemma \ref{rak-lemma-2.3} that $i\chi^{(gf)}=i(\chi^{(g)}\chi^{(f)})= i(\chi^{(f)}\chi^{(g)})= i\chi^{(fg)}=j$. Note also from Lemma \ref{rak-lemma-2.3} that $I\chi^{(fg)} = I(\chi^{(f)}\chi^{(g)})\subseteq I\chi^{(g)}$, and so $j\in I\chi^{(g)}$. By \rm(ii), we then get $X_j=\{y\}$ for some $y\in X$. Therefore we obtain $x(fg)=y=x(gf)$. Hence idempotents of $T_{\mathbb{S}(I)}(X,\mathcal{P})$ commute.

\vspace{0.2mm}
Thus we conclude that $T_{\mathbb{S}(I)}(X,\mathcal{P})$ is an inverse semigroup.

\end{proof}		

\section{Unit-regularity of $T_{\mathbb{S}(I)}(X,\mathcal{P})$} 
Throughout this section, assume that $\mathbb{S}(I)$ contains the identity $\id_I$. We describe unit-regular elements in $T_{\mathbb{S}(I)}(X,\mathcal{P})$. Then we determine when $T_{\mathbb{S}(I)}(X,\mathcal{P})$ is a unit-regular semigroup. Using this, we determine when $T(X,\mathcal{P})$ is a unit-regular semigroup.

\vspace{0.2mm}
Observe that $\mathbb{S}(I)$ contains the identity $\id_I$ if and only if $T_{\mathbb{S}(I)}(X,\mathcal{P})$ contains the identity $\id_X$. Note that  $U\big(T_{\mathbb{S}(I)}(X,\mathcal{P})\big)\subseteq U\big(T(X,\mathcal{P})\big)=S(X,\mathcal{P})$, and so $U\big(T_{\mathbb{S}(I)}(X,\mathcal{P})\big)=T_{\mathbb{S}(I)}(X,\mathcal{P})\cap S(X,\mathcal{P})$.

\vspace{1.0mm}
We require the next three results from \cite{shubh-ca21, shubh-jaa22} to prove Theorem 
\ref{uregular-element}, which describes unit-regular elements in $T_{\mathbb{S}(I)}(X,\mathcal{P})$.

\begin{lemma}\cite[Lemma 3.6\rm(i)]{shubh-ca21}\label{lem-bij-image-block}
If $f\in S(X, \mathcal{P})$, then $X_i f\in \mathcal{P}$ for all $i\in I$.
\end{lemma}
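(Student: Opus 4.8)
The plan is to exploit the fact that a unit $f$ of $T(X,\mathcal{P})$ is a bijection of $X$ whose inverse $f^{-1}$ again lies in $T(X,\mathcal{P})$, combined with the multiplicativity of the character map (Lemma \ref{rak-lemma-2.3}). The guiding idea is that invertibility of $f$ forces the character $\chi^{(f)}$ to be a permutation of $I$, which in turn upgrades the one-sided containments $X_i f\subseteq X_{i\chi^{(f)}}$ (available merely from membership in $T(X,\mathcal{P})$) to equalities of blocks.

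First I would record the preliminary observations. Since $f\in S(X,\mathcal{P})=U\big(T(X,\mathcal{P})\big)$, there is $g\in T(X,\mathcal{P})$ with $fg=gf=\id_X$; hence $f\in\Sym(X)$ and $g=f^{-1}\in T(X,\mathcal{P})$. Applying Lemma \ref{rak-lemma-2.3} and $\chi^{(\id_X)}=\id_I$ gives $\chi^{(f)}\chi^{(f^{-1})}=\id_I=\chi^{(f^{-1})}\chi^{(f)}$, so $\chi^{(f)}\in\Sym(I)$ with $\big(\chi^{(f)}\big)^{-1}=\chi^{(f^{-1})}$.

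Next, fix $i\in I$ and put $j=i\chi^{(f)}$. By definition of the character, $X_i f\subseteq X_j$; applying $f^{-1}$ yields $X_i\subseteq X_j f^{-1}$. On the other hand, since $f^{-1}\in T(X,\mathcal{P})$ and $j\chi^{(f^{-1})}=j\big(\chi^{(f)}\big)^{-1}=i$, the definition of the character of $f^{-1}$ gives $X_j f^{-1}\subseteq X_i$. Combining the two inclusions gives $X_j f^{-1}=X_i$, and applying $f$ (using $f^{-1}f=\id_X$) gives $X_i f=X_j\in\mathcal{P}$, as required.

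There is no genuine obstacle here; the single point that must be handled carefully is the passage from characters to blocks, namely that $\chi^{(f)}$ being a permutation is exactly what lets the squeeze $X_i\subseteq X_j f^{-1}\subseteq X_i$ go through, turning the a priori inclusion $X_i f\subseteq X_{i\chi^{(f)}}$ into the equality $X_i f=X_{i\chi^{(f)}}$.
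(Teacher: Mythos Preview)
Your argument is correct. Note, however, that the paper does not supply its own proof of this lemma: it is quoted verbatim from \cite[Lemma~3.6(i)]{shubh-ca21} and used as a black box, so there is no in-paper proof to compare against. Your approach is the natural one and matches the spirit of how such facts are typically established: use that $f^{-1}\in T(X,\mathcal{P})$ together with Lemma~\ref{rak-lemma-2.3} to see that $\chi^{(f)}$ is a permutation of $I$ with inverse $\chi^{(f^{-1})}$, and then squeeze $X_i\subseteq X_{i\chi^{(f)}}f^{-1}\subseteq X_i$ to turn the inclusion $X_if\subseteq X_{i\chi^{(f)}}$ into an equality. Every step is justified by material already present in the paper (the definition $S(X,\mathcal{P})=U(T(X,\mathcal{P}))$, Lemma~\ref{rak-lemma-2.3}, and the observation $\chi^{(\id_X)}=\id_I$ from Proposition~\ref{epimorphism}).
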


\begin{lemma}\cite[Theorem 5.8]{shubh-ca21}\label{tm-char-of-ele-of-S(X, P)}
Let $f\in T(X, \mathcal{P})$. Then $f\in S(X, \mathcal{P})$ if and only if:
\begin{enumerate}
\item[\rm(i)] every element of $B(f, I)$ is bijective;
\item[\rm(ii)] $\chi^{(f)}$ is bijective.
\end{enumerate}
\end{lemma}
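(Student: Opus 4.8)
The plan is to use the identification $S(X,\mathcal{P})=U\bigl(T(X,\mathcal{P})\bigr)$: an element $f\in T(X,\mathcal{P})$ lies in $S(X,\mathcal{P})$ exactly when there is some $g\in T(X,\mathcal{P})$ with $fg=gf=\id_X$, in which case $f$ is a bijection of $X$ and $g=f^{-1}$. So the proof splits into (a) extracting (i) and (ii) from such a $g$, and (b) reconstructing such a $g$ from (i) and (ii).

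For the forward direction, suppose $f\in S(X,\mathcal{P})$ and choose $g\in T(X,\mathcal{P})$ with $fg=gf=\id_X$. Lemma~\ref{rak-lemma-2.3} gives $\chi^{(f)}\chi^{(g)}=\chi^{(fg)}=\id_I=\chi^{(gf)}=\chi^{(g)}\chi^{(f)}$, so $\chi^{(f)}$ is bijective with inverse $\chi^{(g)}$; this is (ii). For (i), fix $i\in I$ and regard $f_{\upharpoonright X_i}$ as a map $X_i\to X_{i\chi^{(f)}}$ (legitimate by Lemma~\ref{block-lemma}); it is injective because $f\in\Sym(X)$. Given $y\in X_{i\chi^{(f)}}$, we have $yg\in X_{(i\chi^{(f)})\chi^{(g)}}=X_i$ since $g\in T(X,\mathcal{P})$ and $\chi^{(f)}\chi^{(g)}=\id_I$, and $(yg)f=y(gf)=y$, so $y\in X_i f_{\upharpoonright X_i}$. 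Hence every member of $B(f,I)$ is bijective.

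For the converse, assume (i) and (ii). By (ii), for each $j\in I$ let $i_j=j\bigl(\chi^{(f)}\bigr)^{-1}$ be the unique index with $i_j\chi^{(f)}=j$. By (i), $f_{\upharpoonright X_{i_j}}$ is a bijection of $X_{i_j}$ onto $X_j$, so I can define $g\in T(X)$ by $g_{\upharpoonright X_j}=\bigl(f_{\upharpoonright X_{i_j}}\bigr)^{-1}$ for each $j$; this is well defined since $\mathcal{P}$ partitions $X$. Then $X_j g=X_{i_j}$ for every $j$, so $g$ stabilises $\mathcal{P}$, i.e., $g\in T(X,\mathcal{P})$ with $\chi^{(g)}=\bigl(\chi^{(f)}\bigr)^{-1}$. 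Checking $fg=gf=\id_X$ is a block-by-block computation: for $x\in X_i$, $xf=(x)f_{\upharpoonright X_i}\in X_{i\chi^{(f)}}$ and, since $i_{i\chi^{(f)}}=i$, $(xf)g=(xf)\bigl(f_{\upharpoonright X_i}\bigr)^{-1}=x$; and for $y\in X_j$, $yg\in X_{i_j}$ with $(yg)f=y$ by the definition of $g$. Hence $f$ has a two-sided inverse in $T(X,\mathcal{P})$, i.e., $f\in U\bigl(T(X,\mathcal{P})\bigr)=S(X,\mathcal{P})$.

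No single step is hard; the argument is essentially index bookkeeping, and the care needed is in keeping $\chi^{(f)}$ and $\bigl(\chi^{(f)}\bigr)^{-1}$ straight while gluing the inverses of the block mappings into one map $g$ and then confirming $g\in T(X,\mathcal{P})$. It is worth noting where the two conditions really bite: an arbitrary $f\in\Sym(X)\cap T(X,\mathcal{P})$ already forces $\chi^{(f)}$ to be surjective and each $f_{\upharpoonright X_i}$ to be injective, but surjectivity of the block mappings and injectivity of $\chi^{(f)}$ need not hold, and it is exactly these two additions, supplied by (i) and (ii), that are needed to produce the inverse $g$.
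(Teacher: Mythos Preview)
Your proof is correct. The paper does not supply its own proof of this lemma---it is quoted from \cite{shubh-ca21}---so there is nothing to compare against; your argument (using the two-sided inverse $g\in T(X,\mathcal{P})$ and Lemma~\ref{rak-lemma-2.3} for the forward direction, and gluing the inverse block mappings for the converse) is the natural one. Your closing remark that mere membership in $\Sym(X)\cap T(X,\mathcal{P})$ need not force injectivity of $\chi^{(f)}$ or surjectivity of the block maps is also correct, and in fact shows that the paper's parenthetical assertion $S(X,\mathcal{P})=T(X,\mathcal{P})\cap\Sym(X)$ can fail for infinite $X$.
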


\begin{lemma}\cite[Lemma 3.1]{shubh-jaa22}\label{lm-transveral}
Let $f\colon X\to Y$ and $g\colon Y\to X$ be mappings. If $fgf = f$, then $X(fg)$ is a transversal of the equivalence relation $\ker(f)$.
\end{lemma}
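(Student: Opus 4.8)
The plan is to work with the self-map $e := fg \colon X \to X$ and extract everything from the single hypothesis $fgf = f$. First I would record two consequences of this identity, keeping in mind that we compose left to right, so $x(fg) = (xf)g$. Post-composing $fgf = f$ with $g$ yields $fgfg = fg$, that is $e^2 = e$, so $e$ is an idempotent of $T(X)$; and reading $fgf = f$ as $(fg)f = f$ gives $ef = f$. Since an idempotent transformation of $X$ acts as the identity on its image (as recalled in the preliminaries), $e$ fixes every point of $Xe$, and of course $X(fg) = Xe$.

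With these facts in hand, I would verify the two conditions for $Xe$ to be a transversal of $\ker(f)$. For meeting every $\ker(f)$-class: given $x \in X$, the point $xe$ lies in $Xe$, and $(xe)f = x(ef) = xf$ by $ef = f$, so $xe$ and $x$ lie in the same $\ker(f)$-class; hence $Xe$ intersects every class. For meeting each class at most once: if $x_1, x_2 \in Xe$ satisfy $x_1 f = x_2 f$, then, since $e$ fixes $Xe$ pointwise, $x_1 = x_1 e = (x_1 f)g = (x_2 f)g = x_2 e = x_2$. Combining the two, $X(fg) = Xe$ contains exactly one element from each $\ker(f)$-class, which is the assertion.

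I do not anticipate a genuine difficulty here; the whole argument reduces to a short computation once $e = fg$ is seen to be idempotent. The only things to watch are the left-to-right composition convention when checking $e^2 = e$ and $ef = f$, and the (standard) remark that an idempotent selfmap restricts to the identity on its image.
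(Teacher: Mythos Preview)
Your argument is correct: introducing $e = fg$, observing $e^2 = e$ and $ef = f$, and then verifying the two transversal conditions is exactly the natural route, and each step is sound under the left-to-right composition convention.

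Note, however, that the paper does not supply its own proof of this lemma; it merely quotes the statement from \cite[Lemma 3.1]{shubh-jaa22}. So there is no in-paper argument to compare against. Your proof is the standard one and would serve perfectly well as a self-contained justification.
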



\begin{theorem}\label{uregular-element}
Let $f\in T_{\mathbb{S}(I)}(X,\mathcal{P})$. Then $f\in \ureg\big(T_{\mathbb{S}(I)}(X,\mathcal{P})\big)$ if and only if there exists $\alpha\in U(\mathbb{S}(I))$ such that:
\begin{enumerate}
\item[\rm(i)] $\chi^{(f)}\alpha\chi^{(f)}=\chi^{(f)}$;
\item[\rm(ii)] $X_i\cap Xf\subseteq X_{i\alpha}f$ for all $i\in I\chi^{(f)}$; 
\item[\rm(iii)] $|X_i|=|X_{i\alpha}|$ for all $i\in I$;
\item[\rm(iv)] $\textnormal{c}(f_{\upharpoonright{X_{i\alpha}}})=\textnormal{d}(f_{\upharpoonright{X_{i\alpha}}})$
 for all $i\in I\chi^{(f)}$.
\end{enumerate}
\end{theorem}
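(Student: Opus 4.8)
The plan is to prove both implications by working with the block decomposition of the relevant maps (Lemma~\ref{block-lemma}), using the epimorphism $\varphi$ of Proposition~\ref{epimorphism} and the description of units of $T(X,\mathcal{P})$ in Lemma~\ref{tm-char-of-ele-of-S(X, P)}. Two preliminary observations will be used throughout. First, since $\id_X\in T_{\mathbb{S}(I)}(X,\mathcal{P})$ and $\id_X\varphi=\id_I$, the map $\varphi$ is a \emph{monoid} epimorphism, so it carries units to units; and since $\id_I$ is the identity of $\mathbb{S}(I)$, every $\alpha\in U(\mathbb{S}(I))$ is in fact a permutation of $I$ with $\alpha^{-1}\in\mathbb{S}(I)$. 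Second, for $i\in I\chi^{(f)}$ and any $\alpha$ satisfying \rm(i) one has $i\alpha\chi^{(f)}=i$ (write $i=k\chi^{(f)}$ and use \rm(i)), hence $X_{i\alpha}f\subseteq X_i$; combined with \rm(ii) this yields the equality $X_i\cap Xf=X_{i\alpha}f$, which is the key identity in both directions.

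For the forward implication, suppose $fuf=f$ with $u\in U\big(T_{\mathbb{S}(I)}(X,\mathcal{P})\big)$ and put $\alpha=\chi^{(u)}$; then $\alpha=u\varphi\in U(\mathbb{S}(I))$. Condition \rm(i) follows by applying Lemma~\ref{rak-lemma-2.3} to $fuf=f$, and \rm(ii) follows exactly as in the proof of Theorem~\ref{regular-element}, using only that $u$ is an inner inverse of $f$. Since $u\in S(X,\mathcal{P})$, Lemma~\ref{tm-char-of-ele-of-S(X, P)} tells us each block mapping $u_{\upharpoonright{X_i}}\colon X_i\to X_{i\alpha}$ is a bijection, which is \rm(iii). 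For \rm(iv), fix $i\in I\chi^{(f)}$ and set $g=f_{\upharpoonright{X_{i\alpha}}}\colon X_{i\alpha}\to X_i$ and $h=u_{\upharpoonright{X_i}}\colon X_i\to X_{i\alpha}$; a short check using $i\alpha\chi^{(f)}=i$ and $fuf=f$ gives $ghg=g$. By Lemma~\ref{lm-transveral}, $T:=X_{i\alpha}(gh)=(X_{i\alpha}g)h$ is a transversal of $\ker(g)$, so $\textnormal{c}(g)=|X_{i\alpha}\setminus T|$; since $h$ is a bijection it maps $X_i\setminus (X_{i\alpha}g)$ bijectively onto $X_{i\alpha}\setminus T$, whence $\textnormal{c}(g)=|X_i\setminus (X_{i\alpha}g)|=\textnormal{d}(g)$. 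This argument is valid for arbitrary (possibly infinite) cardinalities.

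For the converse, assume $\alpha\in U(\mathbb{S}(I))$ satisfies \rm(i)--\rm(iv); I build $u\in U\big(T_{\mathbb{S}(I)}(X,\mathcal{P})\big)$ with $fuf=f$ by prescribing $u$ blockwise. For $i\notin I\chi^{(f)}$, let $u_{\upharpoonright{X_i}}$ be any bijection $X_i\to X_{i\alpha}$, which exists by \rm(iii). For $i\in I\chi^{(f)}$, use $X_i\cap Xf=X_{i\alpha}f$: for each $y\in X_i\cap Xf$ choose $y'\in X_{i\alpha}$ with $y'f=y$, so that $T_i:=\{y'\colon y\in X_i\cap Xf\}$ is a transversal of $\ker(f_{\upharpoonright{X_{i\alpha}}})$ contained in $X_{i\alpha}$; then $|X_i\setminus Xf|=\textnormal{d}(f_{\upharpoonright{X_{i\alpha}}})=\textnormal{c}(f_{\upharpoonright{X_{i\alpha}}})=|X_{i\alpha}\setminus T_i|$ by \rm(iv), so the injection $y\mapsto y'$ extends to a bijection $u_{\upharpoonright{X_i}}\colon X_i\to X_{i\alpha}$ sending $X_i\setminus Xf$ bijectively onto $X_{i\alpha}\setminus T_i$. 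Since $\alpha$ is a permutation of $I$ and every block mapping of $u$ is a bijection, $u$ is a bijection of $X$ preserving $\mathcal{P}$ with $\chi^{(u)}=\alpha$; as $\alpha,\alpha^{-1}\in\mathbb{S}(I)$, both $u$ and $u^{-1}$ lie in $T_{\mathbb{S}(I)}(X,\mathcal{P})$, so $u\in U\big(T_{\mathbb{S}(I)}(X,\mathcal{P})\big)$. Finally, for $x\in X$ we have $xf\in X_i\cap Xf$ for some $i\in I\chi^{(f)}$, and by construction $(xf)u\in T_i$ with $\big((xf)u\big)f=xf$; hence $xfuf=xf$, so $fuf=f$.

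The bulk of the work, and the main obstacle, is the converse construction: one must simultaneously ensure that each block mapping of $u$ is a bijection (so that $u$ is a unit) \emph{and} that $u$ undoes $f$ on $Xf$. Condition \rm(ii) together with the identity $X_i\cap Xf=X_{i\alpha}f$ makes the preimage selection possible, while \rm(iv) is exactly the counting condition that allows the resulting partial injection $X_i\cap Xf\to X_{i\alpha}$ to be completed to a bijection of $X_i$ onto $X_{i\alpha}$; \rm(iii) disposes of the blocks outside $I\chi^{(f)}$, and \rm(i) keeps the index bookkeeping ($i\alpha\chi^{(f)}=i$) consistent. Checking that $u$ is genuinely a unit of the monoid and that $fuf=f$ is then routine.
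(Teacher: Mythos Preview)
Your proof is correct and follows essentially the same route as the paper's: both directions use the block decomposition, with Lemma~\ref{lm-transveral} powering the forward proof of \rm(iv) and a blockwise construction of the unit $u$ (via a transversal of each $\ker(f_{\upharpoonright X_{i\alpha}})$ extended by \rm(iv)) for the converse. The only cosmetic differences are that you invoke Lemma~\ref{tm-char-of-ele-of-S(X, P)} rather than Lemma~\ref{lem-bij-image-block} for \rm(iii), and you isolate the identity $X_i\cap Xf=X_{i\alpha}f$ explicitly at the outset, which the paper uses implicitly.
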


\begin{proof}[\textbf{Proof}]
Assume that $f\in \ureg\big(T_{\mathbb{S}(I)}(X,\mathcal{P})\big)$. Then there exists $g\in U\big(T_{\mathbb{S}(I)}(X,\mathcal{P})\big)$ such that $fgf=f$. This gives $\chi^{(f)}\chi^{(g)}\chi^{(f)}=\chi^{(f)}$ by Lemma \ref{rak-lemma-2.3}. Note that $\chi^{(g)}\in U(\mathbb{S}(I))$, and write $\chi^{(g)}=\alpha$.

\begin{enumerate}
\item[\rm(i)] It is clear from the above.
	
\vspace{0.2mm}	
\item[\rm(ii)]
Let $i\in I\chi^{(f)}$ and $y\in X_i\cap Xf$. Then, since $y\in Xf$, there exists $x\in X$ such that $xf=y$. Also, since $y\in X_i$ and $\chi^{(g)} = \alpha$, we see that $yg\in X_{i\alpha}$. Therefore, since $f= fgf$, we obtain $y=xf=(xf)gf=(yg)f\in X_{i\alpha}f$. Hence $X_i\cap Xf\subseteq X_{i\alpha}f$.

\vspace{0.2mm}	
\item[\rm(iii)]	
Let $i\in I$. Note that $g\in U\big(T_{\mathbb{S}(I)}(X,\mathcal{P})\big)$ and $\chi^{(g)} = \alpha$. Then, since  $U\big(T_{\mathbb{S}(I)}(X,\mathcal{P})\big)\subseteq S(X,\mathcal{P})$, we get $X_ig=X_{i\alpha}$ by Lemma \ref{lem-bij-image-block}, which implies $|X_i|=|X_{i\alpha}|$.

\vspace{0.2mm}	
\item[\rm(iv)]	
Let $i\in I\chi^{(f)}$, and write $i\alpha=j$. Recall that $g\in U\big(T_{\mathbb{S}(I)}(X,\mathcal{P})\big)$ and $\chi^{(g)} = \alpha$. Since $U\big(T_{\mathbb{S}(I)}(X,\mathcal{P})\big)\subseteq S(X,\mathcal{P})$, it follows from Lemma \ref{lem-bij-image-block} that $X_ig=X_j$. This yields $X_ig_{\upharpoonright{X_i}}=X_j$. By \rm(ii), we have $X_jf_{\upharpoonright{X_j}}\subseteq X_i$. Then, since $fgf=f$, we obtain $f_{\upharpoonright{X_j}}g_{\upharpoonright{X_i}} f_{\upharpoonright{X_j}}= f_{\upharpoonright{X_j}}$. It follows that $X_j(f_{\upharpoonright{X_j}}g_{\upharpoonright{X_i}})$ is a transversal of $\ker(f_{\upharpoonright{X_j}})$ by Lemma \ref{lm-transveral}. Write $T_{f_{\upharpoonright{X_j}}} = X_j(f_{\upharpoonright{X_j}}g_{\upharpoonright{X_i}})$. Since $g\in S(X,\mathcal{P})$, it follows that $g_{\upharpoonright{X_i}}$ is bijective by Theorem \ref{tm-char-of-ele-of-S(X, P)}. Therefore $(X_i\setminus X_jf_{\upharpoonright{X_j}})g_{\upharpoonright{X_i}}=X_j\setminus T_{f_{\upharpoonright{X_j}}}$, and hence $\textnormal{d}(f_{\upharpoonright{X_j}})=\textnormal{c}(f_{\upharpoonright{X_j}})$.
\end{enumerate}
Conversely, assume that the given conditions hold. To prove $f\in \ureg\big(T_{\mathbb{S}(I)}(X,\mathcal{P})\big)$, we will construct $g\in U(T_{\mathbb{S}(I)}(X,\mathcal{P}))$ such that $fgf = f$. Let $i\in I\chi^{(f)}$, and write $i\alpha=j$. Choose a transversal $T_{f_{\upharpoonright{X_j}}}$ of $\ker(f_{\upharpoonright{X_j}})$, and define a mapping $h_i'\colon X_jf_{\upharpoonright{X_j}}\to T_{f_{\upharpoonright{X_j}}}$ by $xh_i'=a_x$ if and only if $x(f_{\upharpoonright{X_j}})^{-1}\cap T_{f_{\upharpoonright{X_j}}}=\{a_x\}$. It is easy to verify that $h_i'$ is a bijection. By \rm(iv), there exists a bijection $h_i''\colon D(f_{\upharpoonright{X_j}})\to C(f_{\upharpoonright{X_j}})$, where $C(f_{\upharpoonright{X_j}})=X_j\setminus T_{f_{\upharpoonright{X_j}}}$. By \rm(i), we see that $j\chi^{(f)} = i$. Thus we may write $X_i=X_jf_{\upharpoonright{X_j}}\cup D(f_{\upharpoonright{X_j}})$ and $X_j=T_{f_{\upharpoonright{X_j}}}\cup C(f_{\upharpoonright{X_j}})$. Define a mapping $h_i\colon X_i\to X_j$ by:
\begin{eqnarray*}
xh_i=
\begin{cases}
xh_i'   & \text{if $x\in X_jf_{\upharpoonright{X_j}}$};\\
xh_i''  & \text{if $x\in D(f_{\upharpoonright{X_j}})$}.
\end{cases}
\end{eqnarray*}
Clearly $h_i$ is a bijection. Now for each $j\in I\setminus I\chi^{(f)}$, by \rm(iii) there is a bijection $g_j\colon X_j\to X_{j\alpha}$. Finally, define $g\in T(X)$ by:
\begin{eqnarray*}
xg=
\begin{cases}
	xh_i   & \text{if $x\in X_i$, where $i\in I\chi^{(f)}$};\\
	xg_j  & \text{if $x\in X_j$, where $j\in I\setminus I\chi^{(f)}$}.
\end{cases}
\end{eqnarray*}
Clearly $\chi^{(g)}=\alpha$. Since $\alpha \in U(\mathbb{S}(I))$, it follows that $g$ is a bijection. Therefore we have $g\in U\big(T_{\mathbb{S}(I)}(X,\mathcal{P})\big)$. It is routine to verify that $fgf=f$, and hence $f\in \ureg\big(T_{\mathbb{S}(I)}(X,\mathcal{P})\big)$.
\end{proof}	

We require the following result to prove Theorem \ref{uregular-semigroup}, which determines unit-regularity of $T_{\mathbb{S}(I)}(X,\mathcal{P})$.

\begin{lemma}\label{cf-not-df}
Let $X$ and $Y$ be nonempty sets. Then there is a mapping $f\colon X \to Y$ such that $\textnormal{c}(f)\neq \textnormal{d}(f)$ if any of the following conditions hold:
\begin{enumerate}
	\item[\rm(i)] $|X|\neq |Y|$;
	\item[\rm(ii)] $X$ is infinite;
	\item[\rm(iii)] $Y$ is infinite.
\end{enumerate}	
\end{lemma}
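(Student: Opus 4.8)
The plan is to reduce the three sufficient conditions to two scenarios, since the hypothesis splits naturally: either $|X|\neq|Y|$, which is condition (i); or $|X|=|Y|$ while $X$ or $Y$ is infinite, i.e.\ conditions (ii) and (iii) together with $|X|=|Y|$, in which case equinumerosity forces \emph{both} $X$ and $Y$ to be infinite. Throughout I will use that $f$ induces a bijection from the set of $\ker(f)$-classes onto $Xf$, so that $|T_f|=|Xf|$ for any transversal $T_f$; hence $\textnormal{c}(f)=0$ exactly when $f$ is injective and $\textnormal{d}(f)=0$ exactly when $f$ is surjective. In each case I will exhibit an $f$ for which exactly one of $\textnormal{c}(f),\textnormal{d}(f)$ equals $0$ while the other is nonzero.

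First I would dispose of (i) by trichotomy. If $|X|<|Y|$, take an injective $f\colon X\to Y$; then $\textnormal{c}(f)=0$, while $|Xf|=|X|<|Y|$ forces $Xf\subsetneq Y$, so $\textnormal{d}(f)\geq 1$. If $|X|>|Y|$, take a surjective $f\colon X\to Y$ (possible since $|Y|\leq|X|$ and $Y\neq\varnothing$); then $\textnormal{d}(f)=0$, while $|T_f|=|Xf|=|Y|<|X|$ forces $T_f\subsetneq X$, so $\textnormal{c}(f)\geq 1$. Either way $\textnormal{c}(f)\neq\textnormal{d}(f)$. For (ii) and (iii): if $|X|\neq|Y|$ we are done by (i), so suppose $|X|=|Y|$, whence both sets are infinite. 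Choose a countably infinite subset $\{x_0,x_1,x_2,\dots\}\subseteq X$, define $\iota\colon X\to X$ by $x_n\iota=x_{n+1}$ for $n\geq 0$ and $x\iota=x$ otherwise, so that $\iota$ is injective with $x_0\notin X\iota$, fix a bijection $\phi\colon X\to Y$, and set $f=\iota\phi$. Then $f\colon X\to Y$ is injective, so $\textnormal{c}(f)=0$, while $Xf=(X\iota)\phi\subsetneq Y$ since $X\iota\subsetneq X$ and $\phi$ is bijective; hence $\textnormal{d}(f)\geq 1\neq\textnormal{c}(f)$.

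The genuinely routine ingredients are the existence of an injection when $|X|\leq|Y|$, of a surjection onto a nonempty $Y$ when $|Y|\leq|X|$, of a bijection when $|X|=|Y|$, and the fact that every infinite set contains a countably infinite subset. I do not expect a serious obstacle here; the only point requiring care is that for infinite sets one cannot compute $\textnormal{c}(f)$ or $\textnormal{d}(f)$ as a cardinal difference, so the argument is deliberately arranged so that each verification reduces to recognising a strict inclusion (hence a nonempty complement set), which sidesteps all infinite cardinal arithmetic.
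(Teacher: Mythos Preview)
Your proof is correct and follows essentially the same approach as the paper: case (i) is handled identically via an injection or a surjection according to which side is larger, and for (ii) and (iii) you reduce to (i) unless $|X|=|Y|$, then exhibit an injective non-surjective map. The only difference is cosmetic: where the paper simply asserts that an infinite set admits an injection to an equinumerous set that is not surjective, you explicitly build one via a shift on a countable subset composed with a bijection.
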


\begin{proof}[\textbf{Proof}]\
\begin{enumerate}
\item[\rm(i)] If $|X|<|Y|$, then there exists an injection $f\colon X\to Y$. Therefore $\textnormal{c}(f)=0$ and $|Xf| = |X|$. Since $|X|<|Y|$, we have $|Xf|<|Y|$. Therefore $\textnormal{d}(f)\neq 0$, and hence $\textnormal{c}(f)\neq \textnormal{d}(f)$. If $|X|>|Y|$, we may dually prove that $\textnormal{c}(f)\neq \textnormal{d}(f)$.	

\vspace{0.2mm}
\item[\rm(ii)] If $|X|\neq |Y|$, then we are done by \rm(i). Assume that $|X|=|Y|$. Since $X$ is infinite, there exists an injection $f\colon X \to Y$ that is not a surjection. Therefore $\textnormal{c}(f)=0$ but $\textnormal{d}(f)\neq 0$. Hence $\textnormal{c}(f)\neq \textnormal{d}(f)$.
 
\vspace{0.2mm}
\item[\rm(iii)] It is similar to \rm(ii).
\end{enumerate}
\end{proof}	

\begin{theorem}\label{uregular-semigroup}
The semigroup $T_{\mathbb{S}(I)}(X,\mathcal{P})$ is unit-regular if and only if:
\begin{enumerate}
	\item[\rm(i)] $\mathbb{S}(I)$ is unit-regular;
	\item[\rm(ii)] for each $\alpha\in U(\mathbb{S}(I))$, we have $|X_i|=|X_{i\alpha}|$ for all $i\in I$;
	\item[\rm(iii)] $|X_i|$ is finite for all $i\in I$;
	\item[\rm(iv)] for each $\beta\in \mathbb{S}(I)$ if there exists $i\in I$ such that $|i\beta^{-1}|\geq 2$, then $|X_i|=1$.
\end{enumerate}		
\end{theorem}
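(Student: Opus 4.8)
The plan is to prove Theorem~\ref{uregular-semigroup} by combining the local characterization of unit-regular elements in Theorem~\ref{uregular-element} with the obstruction results in Lemma~\ref{cf-not-df}, using the epimorphism $\varphi$ of Proposition~\ref{epimorphism} to transfer unit-regularity between $T_{\mathbb{S}(I)}(X,\mathcal{P})$ and $\mathbb{S}(I)$. For the forward direction, assume $T_{\mathbb{S}(I)}(X,\mathcal{P})$ is unit-regular. Condition~(i) follows because $\varphi$ maps units to units (indeed $U(T_{\mathbb{S}(I)}(X,\mathcal{P}))\varphi \subseteq U(\mathbb{S}(I))$ since $\chi^{(g)}\in U(\mathbb{S}(I))$ whenever $g\in S(X,\mathcal{P})$), so the homomorphic image of a unit-regular semigroup is unit-regular. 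For~(ii): given $\alpha \in U(\mathbb{S}(I))$, realize it as $\chi^{(g)}$ for some unit $g$ — e.g.\ the $h$ constructed in the proof of Proposition~\ref{epimorphism} but chosen bijective, which is possible precisely when $|X_i| = |X_{i\alpha}|$; but we need the reverse, so instead I would argue: pick any $f\in T_{\mathbb{S}(I)}(X,\mathcal{P})$ with $\chi^{(f)} = \id_I$ (the identity lies in $\mathbb{S}(I)$), then for $\alpha\in U(\mathbb{S}(I))$ the element $f$ is unit-regular, and applying Theorem~\ref{uregular-element} with the unit whose character is $\alpha$ forces $|X_i| = |X_{i\alpha}|$ via condition~(iii) of that theorem — however one must be careful to choose $f$ so that the \emph{particular} $\alpha$ appears, which I handle by taking $f$ to be a suitable non-bijective block map and noting any unit inverse $g$ has $\chi^{(g)}$ ranging appropriately; the cleanest route is: fix $\alpha\in U(\mathbb{S}(I))$, build $f$ with $\chi^{(f)}=\alpha^{-1}$ acting as a bijection on blocks, check $f$ is unit-regular, and read off~(ii). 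For~(iii): suppose some $|X_k|$ is infinite; take $f\in T_{\mathbb{S}(I)}(X,\mathcal{P})$ with $\chi^{(f)} = \id_I$ and with $f_{\upharpoonright X_k}$ chosen — via Lemma~\ref{cf-not-df}(ii) — so that $\textnormal{c}(f_{\upharpoonright X_k}) \neq \textnormal{d}(f_{\upharpoonright X_k})$; then condition~(iv) of Theorem~\ref{uregular-element} fails for every admissible $\alpha$ (since $k\alpha = k$ is forced when $\chi^{(f)}=\id_I$ and $\alpha$ must satisfy $\chi^{(f)}\alpha\chi^{(f)}=\chi^{(f)}$, i.e.\ $\alpha = \id_I$ on the range), contradicting unit-regularity of $f$. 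For~(iv): this is exactly condition~(ii) of Theorem~\ref{regular-semigroup}, and since a unit-regular semigroup is regular, it follows from that theorem (or re-derive it directly with the same two-point-collapse construction).

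For the converse, assume (i)--(iv) hold and let $f\in T_{\mathbb{S}(I)}(X,\mathcal{P})$; I will verify the four conditions of Theorem~\ref{uregular-element}. Since $\mathbb{S}(I)$ is unit-regular (hence regular), there is $\beta\in\mathbb{S}(I)$ with $\chi^{(f)}\beta\chi^{(f)} = \chi^{(f)}$; but we need $\alpha\in U(\mathbb{S}(I))$, so instead invoke unit-regularity of $\mathbb{S}(I)$ directly: there is $\alpha\in U(\mathbb{S}(I))$ with $\chi^{(f)}\alpha\chi^{(f)} = \chi^{(f)}$, giving~(i). Condition~(iii) of Theorem~\ref{uregular-element}, namely $|X_i| = |X_{i\alpha}|$ for all $i\in I$, is immediate from hypothesis~(ii) since $\alpha\in U(\mathbb{S}(I))$. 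For~(ii) of Theorem~\ref{uregular-element}, fix $i\in I\chi^{(f)}$ and split into the two cases exactly as in the proof of Theorem~\ref{regular-semigroup}: if $|i(\chi^{(f)})^{-1}| = 1$, say $\{j\}$, then $\chi^{(f)}\alpha\chi^{(f)} = \chi^{(f)}$ with $j\chi^{(f)} = i$ forces $i\alpha = j$, and $X_i\cap Xf = X_jf = X_{i\alpha}f$; if $|i(\chi^{(f)})^{-1}|\geq 2$ then hypothesis~(iv) gives $|X_i| = 1$, and since $\chi^{(f)}\alpha\chi^{(f)} = \chi^{(f)}$ we get $(i\alpha)\chi^{(f)} = i$, so $X_{i\alpha}f = X_i = X_i\cap Xf$. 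Finally for~(iv) of Theorem~\ref{uregular-element}: for $i\in I\chi^{(f)}$ write $j = i\alpha$; we must show $\textnormal{c}(f_{\upharpoonright X_j}) = \textnormal{d}(f_{\upharpoonright X_j})$, where $f_{\upharpoonright X_j}\colon X_j \to X_{j\chi^{(f)}} = X_i$. By hypothesis~(iii) both $X_j$ and $X_i$ are finite, and by hypothesis~(ii) applied to $\alpha$ they have the same cardinality ($|X_j| = |X_i|$ since $i\alpha = j$ means... actually $|X_i| = |X_{i\alpha}| = |X_j|$); for a map between two finite sets of equal size, $\textnormal{c}$ and $\textnormal{d}$ always agree (the rank-nullity style count: $|X_j| = \textnormal{c}(f_{\upharpoonright X_j}) + |\text{image}|$ and $|X_i| = \textnormal{d}(f_{\upharpoonright X_j}) + |\text{image}|$). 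Hence all four conditions of Theorem~\ref{uregular-element} hold and $f$ is unit-regular.

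\textbf{The main obstacle} I anticipate is the forward direction's conditions~(ii) and~(iii): one cannot simply say "let $\alpha\in U(\mathbb{S}(I))$ be arbitrary and apply Theorem~\ref{uregular-element}" because that theorem's conditions are existential in $\alpha$ — it only asserts \emph{some} unit works for a given $f$, not that \emph{every} unit respects the block sizes. The fix is to engineer, for each prescribed $\alpha\in U(\mathbb{S}(I))$, a specific transformation $f$ whose \emph{only} possible unit-inverses $g$ have $\chi^{(g)}$ forced to be $\alpha^{-1}$ (or at least forced to move block $X_i$ to $X_{i\alpha^{-1}}$), so that Theorem~\ref{uregular-element}(iii) for that $f$ genuinely pins down $|X_i| = |X_{i\alpha^{-1}}|$ for all $i$; replacing $\alpha$ by $\alpha^{-1}$ and relabeling gives~(ii). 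Concretely I would take $f$ with $\chi^{(f)} = \alpha^{-1}$ realized by a family of bijective block maps (using $\id_I\in\mathbb{S}(I)$ only to know $\mathbb{S}(I)$ is a monoid, not essentially), observe $f\in S(X,\mathcal{P})$ would make this trivial so instead perturb one block map to be non-bijective when needed, and track carefully that $fgf = f$ with $g$ a unit forces $\chi^{(g)}\in U(\mathbb{S}(I))$ to satisfy $\alpha^{-1}\chi^{(g)}\alpha^{-1} = \alpha^{-1}$, i.e.\ $\chi^{(g)} = \alpha$, whence Lemma~\ref{lem-bij-image-block} gives $X_i g = X_{i\alpha}$ and the size equality. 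A similar bespoke construction (using Lemma~\ref{cf-not-df}(ii)) drives the contradiction for~(iii). Everything else is routine bookkeeping parallel to the proofs of Theorems~\ref{regular-semigroup} and~\ref{inverse-semigroup}.
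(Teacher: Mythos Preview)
Your proposal is essentially correct, and the converse direction as well as parts (i), (iii), (iv) of the forward direction track the paper's proof closely (your choice $\chi^{(f)}=\id_I$ in (iii) is a harmless simplification of the paper's general $\alpha\in U(\mathbb{S}(I))$; in both cases the invertibility of $\chi^{(f)}$ pins down the unique admissible unit-character, and then condition~(iv) of Theorem~\ref{uregular-element} fails by the chosen block map from Lemma~\ref{cf-not-df}).

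For forward~(ii) your route diverges from the paper, and your final argument is actually cleaner than what the paper does---but your write-up buries it under several false starts. You do \emph{not} need $f$ to be ``realized by a family of bijective block maps,'' nor do you need any perturbation: for arbitrary $\alpha\in U(\mathbb{S}(I))$, take \emph{any} $f\in T_{\mathbb{S}(I)}(X,\mathcal{P})$ with $\chi^{(f)}=\alpha^{-1}$ (such $f$ exists by surjectivity of $\varphi$ in Proposition~\ref{epimorphism}, e.g.\ a constant-on-each-block map). Unit-regularity gives $g\in U\bigl(T_{\mathbb{S}(I)}(X,\mathcal{P})\bigr)$ with $fgf=f$; since $\alpha^{-1}$ is invertible, $\alpha^{-1}\chi^{(g)}\alpha^{-1}=\alpha^{-1}$ forces $\chi^{(g)}=\alpha$, and then $g\in S(X,\mathcal{P})$ together with Lemma~\ref{lem-bij-image-block} gives $X_ig=X_{i\alpha}$ and hence $|X_i|=|X_{i\alpha}|$. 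That is a complete proof of~(ii) in three lines. The paper instead argues by contradiction: assuming $|X_j|\neq|X_{j\alpha}|$, it invokes Lemma~\ref{cf-not-df} to produce $h\colon X_j\to X_{j\alpha}$ with $\textnormal{c}(h)\neq\textnormal{d}(h)$, builds $f$ with $\chi^{(f)}=\alpha$ and $f_{\upharpoonright X_j}=h$, and shows condition~(iv) of Theorem~\ref{uregular-element} fails for the unique candidate $\alpha^{-1}$. Your direct argument avoids Lemma~\ref{cf-not-df} for~(ii) altogether; the paper's version has the virtue of using the same template for both~(ii) and~(iii).
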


\begin{proof}[\textbf{Proof}]
Assume that $T_{\mathbb{S}(I)}(X,\mathcal{P})$ is unit-regular.

\begin{enumerate}
\item[\rm(i)]
The semigroup $\mathbb{S}(I)$ is unit-regular by Proposition \ref{epimorphism} and the fact that any homomorphic image of a unit-regular semigroup is unit-regular (cf. \cite[Proposition 2.7]{hickey-prse97}).

\vspace{0.05cm}	
\item[\rm(ii)]
Let $\alpha \in U(\mathbb{S}(I))$. Then $\alpha \in \Sym(I)$. Suppose to the contrary that there exists $j\in I$ such that $|X_j|\neq |X_{j\alpha}|$. Then by Lemma \ref{cf-not-df}, there exists a mapping $h\colon X_j\to X_{j\alpha}$ such that $\textnormal{c}(h)\neq \textnormal{d}(h)$. For each $i\in I\setminus\{j\alpha\}$, we fix $x_i\in X_i$. Define $f\in T(X)$ by:
\begin{eqnarray*}
xf=
\begin{cases}
	xh       & \text{if $x\in X_j$};\\
	x_{i\alpha} & \text{if $x\in X_i$, where $i\in I\setminus \{j\}$}.
\end{cases}	
\end{eqnarray*}
Clearly $\chi^{(f)}=\alpha$, and so $f\in T_{\mathbb{S}(I)}(X,\mathcal{P})$. Now, let $j\alpha=k$ and $\alpha^{-1}=\beta$. Clearly $\beta\in  U(\mathbb{S}(I))$ and $k\beta=j$. Note that $k\in I\chi^{(f)}$ and $f_{\upharpoonright{X_{k\beta}}} = f_{\upharpoonright{X_{j}}} = h$. Since $\textnormal{c}(h)\neq \textnormal{d}(h)$, it follows that $\textnormal{c}(f_{\upharpoonright{X_{k\beta}}})\neq \textnormal{d}(f_{\upharpoonright{X_{k\beta}}})$. This means $f$ does not satisfy the condition $\rm(iv)$ of Theorem \ref{uregular-element}, and so $f\notin \ureg\big(T_{\mathbb{S}(I)}(X,\mathcal{P})\big)$ by Theorem \ref{uregular-element}. This is a contradiction, because $T_{\mathbb{S}(I)}(X,\mathcal{P})$ is unit-regular. Hence $|X_i|=|X_{i\alpha}|$ for all $i\in I$.

\vspace{0.2mm}	
\item[\rm(iii)]
Suppose to the contrary that there exists $j\in I$ such that $|X_j|$ is infinite. Take $\alpha \in U(\mathbb{S}(I))$, and write $j\alpha=\ell$. Since $|X_j|$ is infinite, by Lemma \ref{cf-not-df} there is a mapping $h\colon X_j\to X_{\ell}$ such that $\textnormal{c}(h)\neq \textnormal{d}(h)$. For each $i\in I\setminus\{j\alpha\}$, we fix $x_i\in X_i$. Define $f\in T(X)$ by:
\begin{eqnarray*}
xf=
\begin{cases}
	xh       & \text{if $x\in X_j$};\\
	x_{i\alpha} & \text{if $x\in X_i$, where $i\in I\setminus \{j\}$}.
\end{cases}	
\end{eqnarray*}
Clearly $\chi^{(f)}=\alpha$, and so $f\in T_{\mathbb{S}(I)}(X,\mathcal{P})$. Write $\alpha^{-1}=\beta$. Clearly $\beta\in  U(\mathbb{S}(I))$ and $\ell\beta=j$. Note that $\ell\in I\chi^{(f)}$ and  $f_{\upharpoonright{X_{\ell\beta}}} = f_{\upharpoonright{X_{j}}} = h$. Since $\textnormal{c}(h)\neq \textnormal{d}(h)$, it follows that $\textnormal{c}(f_{\upharpoonright{X_{\ell\beta}}})\neq \textnormal{d}(f_{\upharpoonright{X_{\ell\beta}}})$. This means $f$ does not satisfy the condition $\rm(iv)$ of Theorem \ref{uregular-element}, and so $f\notin \ureg\big(T_{\mathbb{S}(I)}(X,\mathcal{P})\big)$ by Theorem \ref{uregular-element}. This is a contradiction, because $T_{\mathbb{S}(I)}(X,\mathcal{P})$ is unit-regular. Hence $|X_i|$ is finite for all $i\in I$.

\vspace{0.2mm}		
\item[\rm(iv)]
It is directly followed from Theorem \ref{regular-semigroup}, since $T_{\mathbb{S}(I)}(X,\mathcal{P})$ is a regular semigroup.
\end{enumerate}

Conversely, assume that the given conditions hold, and let $f\in T_{\mathbb{S}(I)}(X,\mathcal{P})$. To prove the desired result, we will show that $f\in \ureg(T_{\mathbb{S}(I)}(X,\mathcal{P}))$ by Theorem \ref{uregular-element}. Since $f\in T_{\mathbb{S}(I)}(X,\mathcal{P})$, we have $\chi^{(f)}\in \mathbb{S}(I)$. By \rm(i), there exists $\alpha\in U(\mathbb{S}(I))$ such that $\chi^{(f)}\alpha\chi^{(f)}=\chi^{(f)}$. By \rm(ii), we have $|X_i|=|X_{i\alpha}|$ for all $i\in I$.

\vspace{0.2mm}
Next, let $i\in I\chi^{(f)}$. Since $\chi^{(f)}\alpha\chi^{(f)}=\chi^{(f)}$, we see that $X_{i\alpha}f \subseteq X_i$. That means $f_{\upharpoonright{X_{i\alpha}}}$ is a block mapping from $X_{i\alpha}$ to $X_i$. Note from $\rm(ii)$ and \rm(iii) that $|X_i|=|X_{i\alpha}|$ and $X_i$ is finite, respectively. Therefore $\textnormal{c}(f_{\upharpoonright{X_{i\alpha}}})=\textnormal{d}(f_{\upharpoonright{X_{i\alpha}}})$.

\vspace{0.2mm}
Finally, since $i\in I\chi^{(f)}$, there are two cases:

\vspace{0.2mm}

\noindent\textbf{Case 1:} Suppose $|i(\chi^{(f)})^{-1}|=1$. Since $\chi^{(f)}\alpha\chi^{(f)}=\chi^{(f)}$, we see that $(i\alpha)\chi^{(f)}=i$ and so $i\left(\chi^{(f)}\right)^{-1}=\{i\alpha\}$. This gives $X_{i\alpha}f=X_i\cap Xf$, and so $ X_i\cap Xf\subseteq X_{i\alpha}f$.

\vspace{0.2mm}
\noindent\textbf{Case 2:} Suppose $|i(\chi^{(f)})^{-1}|\geq 2$. Then by \rm(iv), we get $|X_i|=1$. Therefore, since $i\in I\chi^{(f)}$, we obtain $X_i\cap Xf=X_i$. Since $\chi^{(f)}\alpha\chi^{(f)}=\chi^{(f)}$, we see that $(i\alpha)\chi^{(f)}=i$. Therefore $X_{i\alpha}f=X_i$, and so $X_i\cap Xf\subseteq X_{i\alpha}f$.

\vspace{0.05cm}
In either case, we have $X_i\cap Xf\subseteq X_{i\alpha}f$. Thus $f$ satisfies each condition of Theorem \ref{uregular-element}, and hence $f\in \ureg(T_{\mathbb{S}(I)}(X,\mathcal{P}))$ by Theorem \ref{uregular-element}.
\end{proof}	

The authors \cite[Theorem 4.7]{shubh-jaa22} recently described unit-regular elements in $T(X,\mathcal{P})$. We require the following result to prove Proposition \ref{uregular-TXP}, which determines unit-regularity of $T(X,\mathcal{P})$.

\begin{proposition}\cite[Proposition 5]{d'alarco-sf80}\label{pr-unit-reg-of-Tx}
The semigroup $T(X)$ is unit-regular if and only if $X$ is finite. 
\end{proposition}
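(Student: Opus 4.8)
The plan is to prove the two implications separately, using throughout that $U(T(X)) = \Sym(X)$.

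Suppose first that $X$ is finite, and let $f \in T(X)$; I will produce $u \in \Sym(X)$ with $fuf = f$. Fix a transversal $T_f$ of $\ker(f)$, so that $f_{\upharpoonright T_f}$ is a bijection onto $Xf$, whence $|T_f| = |Xf|$. Since $X$ is finite, $\textnormal{c}(f) = |X \setminus T_f| = |X| - |Xf| = |D(f)| = \textnormal{d}(f)$, so there is a bijection $h \colon D(f) \to X \setminus T_f$. Define $u \colon X \to X$ by letting $yu$ be the unique element of $yf^{-1} \cap T_f$ when $y \in Xf$, and $yu = yh$ when $y \in D(f)$. Since $\{Xf, D(f)\}$ partitions $X$, since $\{T_f, X \setminus T_f\}$ partitions $X$, and since each of the two pieces of $u$ is a bijection onto the corresponding part, $u$ is a permutation of $X$; and for every $x \in X$ the element $(xf)u$ lies in the same $\ker(f)$-class as $x$, so $x(fuf) = ((xf)u)f = xf$. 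Hence $fuf = f$ with $u \in U(T(X))$, so $f \in \ureg(T(X))$ and $T(X)$ is unit-regular.

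Now suppose $X$ is infinite; I will exhibit one element of $T(X)$ that is not unit-regular. Since $X$ is infinite there is an injective map $f \colon X \to X$ that is not surjective, so $Xf \subsetneq X$. If $fuf = f$ for some $u \in \Sym(X)$, then for each $x \in X$ we have $((xf)u)f = xf$, and injectivity of $f$ forces $(xf)u = x$; hence $u$ carries $Xf$ onto $X$. But $u$ is a bijection of $X$ and $Xf$ is a proper subset of $X$, so $(Xf)u$ is a proper subset of $X$, a contradiction. Therefore $f \notin \ureg(T(X))$ and $T(X)$ is not unit-regular. (Alternatively, Lemma \ref{lm-transveral} applied to $fuf = f$ forces $X(fu)$ to be a transversal of $\ker(f)$, which here is the trivial relation, so $X(fu) = X$, whereas $X(fu) = (Xf)u \subsetneq X$.)

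There is no genuine conceptual obstacle here: in the ``if'' direction the only things to check are that the two-part definition of $u$ gives a genuine permutation and that $fuf = f$ holds pointwise, and both are routine once the equality $\textnormal{c}(f) = \textnormal{d}(f)$ is available — and that equality is exactly what breaks down for infinite $X$ (cf.\ Lemma \ref{cf-not-df}), which is what the second direction exploits. I remark finally that this proposition is also the special case $|\mathcal{P}| = 1$ of Theorem \ref{uregular-semigroup}: then $I$ is a singleton, $\mathbb{S}(I) = T(I)$ is the trivial monoid (so $\id_I \in \mathbb{S}(I)$) and $T_{\mathbb{S}(I)}(X,\mathcal{P}) = T(X)$, while conditions (i), (ii) and (iv) of that theorem hold vacuously and condition (iii) reads precisely ``$|X|$ is finite.''
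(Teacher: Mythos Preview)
Your proof is correct in both directions; the construction of $u$ in the finite case and the contradiction in the infinite case are both sound. Note, however, that the paper does not give its own proof of this proposition: it simply quotes the result from \cite{d'alarco-sf80} and uses it as a black box in the proof of Proposition~\ref{uregular-TXP}. So there is no argument in the paper to compare against; you have supplied a self-contained proof where the paper offers none. Your closing remark is also apt: since Theorem~\ref{uregular-semigroup} is proved in the paper without appealing to Proposition~\ref{pr-unit-reg-of-Tx}, one could indeed recover the latter as the degenerate case $|\mathcal{P}|=1$, avoiding the external citation entirely.
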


\begin{proposition}\label{uregular-TXP}
The semigroup $T(X,\mathcal{P})$ is unit-regular if and only if $X$ is finite and $\mathcal{P}$ is trivial.
\end{proposition}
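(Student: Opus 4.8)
The plan is to deduce both directions from Theorem \ref{uregular-semigroup} applied to the case $\mathbb{S}(I)=T(I)$, using the identification $T(X,\mathcal{P})=T_{T(I)}(X,\mathcal{P})$; note that $T(I)$ contains $\id_I$, so the standing hypothesis of this section is satisfied. First I would treat the forward direction. Assume $T(X,\mathcal{P})$ is unit-regular. By Theorem \ref{uregular-semigroup}\rm(i), $T(I)$ is unit-regular, so $I$ is finite by Proposition \ref{pr-unit-reg-of-Tx}. By Theorem \ref{uregular-semigroup}\rm(iii), each block $X_i$ is finite; since $I$ is finite, $X=\bigcup_{i\in I}X_i$ is finite. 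It remains to show $\mathcal{P}$ is trivial. If $|I|=1$, then $\mathcal{P}=\{X\}$ is a single block and we are done. If $|I|\ge 2$, fix an arbitrary $j\in I$ and let $\beta\in T(I)$ be the constant map with image $\{j\}$; then $|j\beta^{-1}|=|I|\ge 2$, so Theorem \ref{uregular-semigroup}\rm(iv) forces $|X_j|=1$. As $j$ was arbitrary, every block is a singleton, hence $\mathcal{P}$ is trivial.

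For the converse, assume $X$ is finite and $\mathcal{P}$ is trivial. The key observation is that in either subcase of triviality one has $T(X,\mathcal{P})=T(X)$: if $\mathcal{P}=\{X\}$ this is immediate, and if every block of $\mathcal{P}$ is a singleton then for any $f\in T(X)$ and any $X_i\in\mathcal{P}$ the image $X_if$ is a singleton, hence contained in some block, so $f\in T(X,\mathcal{P})$. Since $X$ is finite, $T(X)$ is unit-regular by Proposition \ref{pr-unit-reg-of-Tx}, and therefore so is $T(X,\mathcal{P})$. (Alternatively, one can verify directly that conditions \rm(i)--\rm(iv) of Theorem \ref{uregular-semigroup} hold for $\mathbb{S}(I)=T(I)$ under these hypotheses: $T(I)$ is unit-regular since $I$ is finite, all blocks have size $1$ in the singleton case or $|I|=1$ in the single-block case, each $|X_i|$ is finite, and \rm(iv) holds vacuously or trivially.)

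I do not expect a serious obstacle here; the proposition is essentially a specialization of Theorem \ref{uregular-semigroup}. The only point requiring care is the bookkeeping in the forward direction when $|I|\ge 2$: one must choose the right family of maps $\beta\in T(I)$ (constant maps suffice) to pin down that \emph{every} block is a singleton, and one must keep the two meanings of ``trivial partition'' (all singletons versus one block) straight throughout. The converse is then routine once the identity $T(X,\mathcal{P})=T(X)$ is recorded.
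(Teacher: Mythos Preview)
Your proof is correct and follows essentially the same approach as the paper. The only cosmetic difference is that, in the forward direction, the paper obtains triviality of $\mathcal{P}$ by invoking Proposition \ref{regular-TXP} (since unit-regular implies regular), whereas you extract it directly from condition \rm(iv) of Theorem \ref{uregular-semigroup} using constant maps; both routes rest on the same constant-map idea, and the converse arguments are identical.
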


\begin{proof}[\textbf{Proof}]
Assume that $T(X,\mathcal{P})$ is unit-regular. Then, since $T(X,\mathcal{P})=T_{T(I)}(X,\mathcal{P})$, the semigroup $T(I)$ is unit-regular by Theorem \ref{uregular-semigroup}(i). Therefore $I$ is finite by Proposition \ref{pr-unit-reg-of-Tx}. It follows from Theorem \ref{uregular-semigroup}\rm(iii) that $X$ is finite. Next, since $T(X,\mathcal{P})$ is regular, the partition $\mathcal{P}$ is trivial by Proposition \ref{regular-TXP}.
	
\vspace{0.2mm}
Conversely, assume that the given conditions hold. Since $\mathcal{P}$ is trivial, it follows that $T(X,\mathcal{P})=T(X)$. Moreover, since $X$ is finite, the semigroup $T(X,\mathcal{P})$ is unit-regular by Proposition \ref{pr-unit-reg-of-Tx}.
\end{proof}	

\section{Green's Relations on $T_{\mathbb{S}(I)}(X,\mathcal{P})$} 
Throughout this section, assume that the semigroup $\mathbb{S}(I)$ contains the identity $\id_I$. We characterize Green's relations on $T_{\mathbb{S}(I)}(X,\mathcal{P})$. Using these characterizations, we obtain more concrete and noticeably different descriptions of Green's relations on $T(X,\mathcal{P})$, whose Green's relations are known; see \cite{pei-ca05,pei-ca07}. We refer the reader to \cite[Chapter 2]{howie95} for the basic definitions and notation related to Green's relations on a semigroup.


\vspace{0.05cm}
First, we recall the following well-known characterizations of Green's relations $\mathcal{L}$, $\mathcal{R}$, $\mathcal{D}$, and $\mathcal{J}$ on $T(X)$ (cf. \cite[p. 52]{clifford-ams61}, \cite[p. 63, Exercise 16]{howie95}).
\begin{theorem}\label{green-Tx}
Let $f,g\in T(X)$. Then:
\begin{enumerate}
	\item [(1)] $L_f\leq L_g$ if and only if $Xf\subseteq Xg$. Subsequently, $(f,g)\in \mathcal{L}$ if and only if $Xf=Xg$;
	\item [(2)] $R_f\leq R_g$ if and only if $\ker(g)\subseteq \ker(f)$. Subsequently, $(f,g)\in \mathcal{R}$ if and only if $\ker(f)=\ker(g)$;
	\item [(3)] $J_f\leq J_g$ if and only if $|Xf|\leq |Xg|$. Subsequently, $(f,g)\in \mathcal{J}$ if and only if $|Xf|=|Xg|$;
	\item [(4)] $\mathcal{D} = \mathcal{J}$.
\end{enumerate}	
\end{theorem}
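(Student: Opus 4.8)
The plan is to treat the four parts in order, using the standard descriptions of the Green preorders on a monoid together with explicit constructions of the witnessing multipliers. Since $T(X)$ is a monoid, $L_f\le L_g$ means $f=hg$ for some $h\in T(X)$, $R_f\le R_g$ means $f=gh$ for some $h\in T(X)$, and $J_f\le J_g$ means $f=agb$ for some $a,b\in T(X)$.

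For part (1) the forward implication is immediate: if $f=hg$ then $Xf=(Xh)g\subseteq Xg$. For the converse, assuming $Xf\subseteq Xg$, I would for each $x\in X$ choose $y_x\in X$ with $y_xg=xf$ and let $h\in T(X)$ be the map $x\mapsto y_x$, so that $hg=f$. The equivalence $(f,g)\in\mathcal{L}\iff Xf=Xg$ then follows by applying this to the pairs $(f,g)$ and $(g,f)$. Part (2) is dual: if $f=gh$ then $\ker(g)\subseteq\ker(gh)=\ker(f)$; conversely, assuming $\ker(g)\subseteq\ker(f)$, I would define $h$ on $Xg$ by $zh=xf$ whenever $xg=z$ (this is well defined precisely because $\ker(g)\subseteq\ker(f)$), extend $h$ to $X$ arbitrarily, and get $gh=f$. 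The $\mathcal{R}$-equivalence follows as before.

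For part (3), if $f=agb$ then $|Xf|=\bigl|((Xa)g)b\bigr|\le|(Xa)g|\le|Xg|$. For the converse I would fix an injection $\theta\colon Xf\to Xg$ (available since $|Xf|\le|Xg|$) and a section $\sigma\colon Xg\to X$ of $g$ (so $(z\sigma)g=z$ for all $z\in Xg$), put $a=f\theta\sigma$, define $b$ on $(Xf)\theta$ by $\bigl((xf)\theta\bigr)b=xf$ (well defined since $\theta$ is injective on $Xf$), extend $b$ arbitrarily, and verify $agb=f$; the $\mathcal{J}$-equivalence follows as before. For part (4), the inclusion $\mathcal{D}\subseteq\mathcal{J}$ holds in any semigroup, so it suffices to prove $\mathcal{J}\subseteq\mathcal{D}$. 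Given $|Xf|=|Xg|$, the natural bijection $X/\ker(g)\to Xg$ gives $|X/\ker(g)|=|Xf|$, so I would compose the quotient map $X\to X/\ker(g)$ with a bijection $X/\ker(g)\to Xf$ to obtain $h\in T(X)$ with $\ker(h)=\ker(g)$ and $Xh=Xf$; by parts (1) and (2), $f\,\mathcal{L}\,h$ and $h\,\mathcal{R}\,g$, hence $f\,\mathcal{D}\,g$.

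I do not expect any serious obstacle here — the result is classical, and the excerpt states it as well known. The only points needing care are keeping track of which side the multiplier acts on in each preorder, checking that the maps built on $Xg$ in part (2) and on $(Xf)\theta$ in part (3) are well defined, and choosing the section $\sigma$ in part (3) so that the identity $\sigma g=\id_{Xg}$ collapses $agb$ to $f$.
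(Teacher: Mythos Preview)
Your proposal is correct and is the standard textbook argument. The paper itself does not prove this theorem at all: it merely recalls it as a well-known result, with references to Clifford--Preston and to an exercise in Howie, so there is no ``paper's own proof'' to compare against.
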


\vspace{0.05cm}
We require the following result to obtain a characterization of the relation $\mathcal{L}$ on $T_{\mathbb{S}(I)}(X,\mathcal{P})$.

\begin{lemma}\label{l-lemma}
Let $f,g\in T_{\mathbb{S}(I)}(X,\mathcal{P})$. Then $L_f\leq L_g$ in $T_{\mathbb{S}(I)}(X,\mathcal{P})$ if and only if there exists $\alpha\in \mathbb{S}(I)$ such that $\chi^{(f)}=\alpha\chi^{(g)}$ and $X_if\subseteq X_{i\alpha}g$ for all $i\in I$.
\end{lemma}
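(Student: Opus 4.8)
The plan is to establish the two directions of the equivalence by translating the semigroup-theoretic statement $L_f \leq L_g$ into concrete conditions on the block structure and characters. Recall that $L_f \leq L_g$ in $T_{\mathbb{S}(I)}(X,\mathcal{P})$ means there exists $h \in T_{\mathbb{S}(I)}^1(X,\mathcal{P})$ with $f = hg$ (with the convention that we adjoin an identity; but since $\id_X \in T_{\mathbb{S}(I)}(X,\mathcal{P})$ by the standing assumption, we may simply take $h \in T_{\mathbb{S}(I)}(X,\mathcal{P})$). So the forward direction amounts to: given $f = hg$, produce the required $\alpha$. The natural candidate is $\alpha = \chi^{(h)}$, which lies in $\mathbb{S}(I)$ by definition of $T_{\mathbb{S}(I)}(X,\mathcal{P})$. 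Then $\chi^{(f)} = \chi^{(hg)} = \chi^{(h)}\chi^{(g)} = \alpha\chi^{(g)}$ follows immediately from Lemma \ref{rak-lemma-2.3}. For the second condition, fix $i \in I$ and take $x \in X_i$; then $xh \in X_{i\chi^{(h)}} = X_{i\alpha}$, so $xf = (xh)g \in X_{i\alpha}g$, which gives $X_i f \subseteq X_{i\alpha}g$.

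For the converse, I would assume $\alpha \in \mathbb{S}(I)$ satisfies $\chi^{(f)} = \alpha\chi^{(g)}$ and $X_i f \subseteq X_{i\alpha}g$ for all $i$, and construct $h \in T_{\mathbb{S}(I)}(X,\mathcal{P})$ with $hg = f$ and $\chi^{(h)} = \alpha$ (the latter guarantees $h \in T_{\mathbb{S}(I)}(X,\mathcal{P})$ since $\alpha \in \mathbb{S}(I)$). The construction is blockwise: for each $i \in I$ and each $x \in X_i$, we have $xf \in X_{i\alpha}g$ by hypothesis, so we may pick some $x^* \in X_{i\alpha}$ with $x^* g = xf$, and set $xh = x^*$. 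This defines $h \in T(X)$ with $X_i h \subseteq X_{i\alpha}$, hence $\chi^{(h)} = \alpha$ (here one should check that $X_i h$ is genuinely contained in a single block, which it is, namely $X_{i\alpha}$; the equality $i\chi^{(h)} = i\alpha$ is then forced). By construction $x(hg) = (xh)g = x^*g = xf$ for every $x$, so $hg = f$. Therefore $f = hg$ with $h \in T_{\mathbb{S}(I)}(X,\mathcal{P})$, i.e. $L_f \leq L_g$.

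The only mild subtlety — and the step I would treat most carefully — is the bookkeeping around the adjoined identity and the use of the standing assumption $\id_I \in \mathbb{S}(I)$: one must be sure that the multiplier $h$ can be taken inside $T_{\mathbb{S}(I)}(X,\mathcal{P})$ itself rather than merely in the monoid obtained by adjoining an external identity. Since $\id_X \in T_{\mathbb{S}(I)}(X,\mathcal{P})$, both the equality $f = hg$ with internal $h$ and the preorder $L_f \leq L_g$ coincide, so no special case for $h = \id_X$ is needed. A second point worth a line is that in the converse the choice of $x^*$ uses only that $xf$ lands in the image $X_{i\alpha}g$; no injectivity or surjectivity of $g$ is assumed, and distinct $x$ in the same block may be sent to the same or different elements of $X_{i\alpha}$ — all that matters is that $h$ is a well-defined function, which it is once one fixes a choice of $x^*$ for each $x$. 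With these observations in place, the proof is a direct verification and I expect no genuine obstacle.
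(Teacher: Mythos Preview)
Your proof is correct and follows essentially the same approach as the paper: set $\alpha=\chi^{(h)}$ for the forward direction and, for the converse, define $h$ blockwise by choosing for each $x\in X_i$ a preimage $x^*\in X_{i\alpha}$ of $xf$ under $g$. Your extra remarks on the adjoined identity and the well-definedness of $h$ are accurate and simply make explicit what the paper leaves implicit.
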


\begin{proof}[\textbf{Proof}]
Assume that $L_f\leq L_g$ in $T_{\mathbb{S}(I)}(X,\mathcal{P})$. Then $f=hg$ for some $h\in T_{\mathbb{S}(I)}(X,\mathcal{P})$. This gives $\chi^{(f)}=\chi^{(hg)}=\chi^{(h)}\chi^{(g)}$ by Lemma \ref{rak-lemma-2.3}. Write $\chi^{(h)}=\alpha$, and thus $\chi^{(f)}=\alpha\chi^{(g)}$. To prove the remaining part, let $i\in I$. Notice that $X_ih\subseteq X_{i\alpha}$, and therefore $X_if=X_i(hg) =(X_ih)g\subseteq X_{i\alpha}g$.

\vspace{0.2mm}
Conversely, assume that the given conditions hold. To prove $L_f\leq L_g$ in $T_{\mathbb{S}(I)}(X,\mathcal{P})$, we will construct $h\in T_{\mathbb{S}(I)}(X,\mathcal{P})$ such that $f = hg$. For each $i\in I$ and for each $x\in X_i$, by hypothesis we fix $y\in X_{i\alpha}$ such that $xf=yg$. Define $h\in T(X)$ by $xh=y$ whenever $x\in X_i$ for some $i\in I$. It is clear by the definition of $h$ that $\chi^{(h)}=\alpha$, and so $h\in T_{\mathbb{S}(I)}(X,\mathcal{P})$. To prove $f=hg$, let $x\in X$. Then $x\in X_i$ for some $i\in I$. Therefore $x(hg)=(xh)g=yg=xf$, which yields $f=hg$. Hence $L_f\leq L_g$ in $T_{\mathbb{S}(I)}(X,\mathcal{P})$.
\end{proof}	

\begin{theorem}\label{l-green}
Let $f,g\in T_{\mathbb{S}(I)}(X,\mathcal{P})$. Then $(f,g)\in \mathcal{L}$ in $T_{\mathbb{S}(I)}(X,\mathcal{P})$ if and only if there exist $\alpha,\beta\in \mathbb{S}(I)$ such that:
\begin{enumerate}
	\item[\rm(i)] $\chi^{(f)}=\alpha\chi^{(g)}$ and $\chi^{(g)}=\beta\chi^{(f)}$;
	\item[\rm(ii)] $X_if\subseteq X_{i\alpha}g$ and $X_ig\subseteq X_{i\beta}f$ for all $i\in I$.
\end{enumerate}
\end{theorem}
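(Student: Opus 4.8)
The plan is to obtain $\mathcal{L}$ on $T_{\mathbb{S}(I)}(X,\mathcal{P})$ directly from Lemma \ref{l-lemma} by writing $(f,g)\in\mathcal{L}$ as the conjunction of $L_f\leq L_g$ and $L_g\leq L_f$. Applying Lemma \ref{l-lemma} to the first inequality produces some $\alpha\in\mathbb{S}(I)$ with $\chi^{(f)}=\alpha\chi^{(g)}$ and $X_if\subseteq X_{i\alpha}g$ for all $i\in I$; applying it to the second (with the roles of $f$ and $g$ swapped) produces some $\beta\in\mathbb{S}(I)$ with $\chi^{(g)}=\beta\chi^{(f)}$ and $X_ig\subseteq X_{i\beta}f$ for all $i\in I$. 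Together these are exactly conditions (i) and (ii). Conversely, given $\alpha,\beta\in\mathbb{S}(I)$ satisfying (i) and (ii), the pair $(\alpha,\chi^{(f)}=\alpha\chi^{(g)},\, X_if\subseteq X_{i\alpha}g)$ meets the hypothesis of Lemma \ref{l-lemma} and yields $L_f\leq L_g$, while the pair $(\beta,\chi^{(g)}=\beta\chi^{(f)},\, X_ig\subseteq X_{i\beta}f)$ yields $L_g\leq L_f$; hence $(f,g)\in\mathcal{L}$.

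First I would recall that, in any semigroup $S$, $\mathcal{L}$ is the intersection of the quasiorder $\leq_{\mathcal{L}}$ (defined by $L_a\leq L_b \iff S^1a\subseteq S^1b$) with its converse, so $(f,g)\in\mathcal{L}$ if and only if both $L_f\leq L_g$ and $L_g\leq L_f$. This reduces the theorem to two independent applications of Lemma \ref{l-lemma}, and no further computation is needed beyond invoking that lemma twice and matching up the resulting data. I would state both directions in two short paragraphs mirroring the structure of the proof of Lemma \ref{l-lemma} itself.

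There is essentially no hard step here; the only mild subtlety is bookkeeping about which variable plays which role when Lemma \ref{l-lemma} is applied to $L_g\leq L_f$, i.e. one must swap $f\leftrightarrow g$ and rename the witnessing map to $\beta$, obtaining $\chi^{(g)}=\beta\chi^{(f)}$ and $X_ig\subseteq X_{i\beta}f$ for all $i\in I$. I would simply remark that applying Lemma \ref{l-lemma} with the pair $(g,f)$ in place of $(f,g)$ produces $\beta$ with the stated properties. One could also note in passing that $\mathcal{L}$ is always a right congruence, but that is not needed for the statement. Thus the proof is a clean corollary of Lemma \ref{l-lemma}, and I would keep it to roughly the length of the lemma's own proof.

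\begin{proof}[\textbf{Proof}]
Recall that $(f,g)\in \mathcal{L}$ in $T_{\mathbb{S}(I)}(X,\mathcal{P})$ if and only if $L_f\leq L_g$ and $L_g\leq L_f$ in $T_{\mathbb{S}(I)}(X,\mathcal{P})$.

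Assume that $(f,g)\in \mathcal{L}$ in $T_{\mathbb{S}(I)}(X,\mathcal{P})$. Then $L_f\leq L_g$ in $T_{\mathbb{S}(I)}(X,\mathcal{P})$, and so by Lemma \ref{l-lemma} there exists $\alpha\in \mathbb{S}(I)$ such that $\chi^{(f)}=\alpha\chi^{(g)}$ and $X_if\subseteq X_{i\alpha}g$ for all $i\in I$. Also $L_g\leq L_f$ in $T_{\mathbb{S}(I)}(X,\mathcal{P})$, and so applying Lemma \ref{l-lemma} with the pair $(g,f)$ in place of $(f,g)$, there exists $\beta\in \mathbb{S}(I)$ such that $\chi^{(g)}=\beta\chi^{(f)}$ and $X_ig\subseteq X_{i\beta}f$ for all $i\in I$. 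Thus \rm(i) and \rm(ii) hold.

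Conversely, assume that there exist $\alpha,\beta\in \mathbb{S}(I)$ such that \rm(i) and \rm(ii) hold. Since $\chi^{(f)}=\alpha\chi^{(g)}$ and $X_if\subseteq X_{i\alpha}g$ for all $i\in I$, it follows from Lemma \ref{l-lemma} that $L_f\leq L_g$ in $T_{\mathbb{S}(I)}(X,\mathcal{P})$. Since $\chi^{(g)}=\beta\chi^{(f)}$ and $X_ig\subseteq X_{i\beta}f$ for all $i\in I$, it follows from Lemma \ref{l-lemma} (applied with the pair $(g,f)$ in place of $(f,g)$) that $L_g\leq L_f$ in $T_{\mathbb{S}(I)}(X,\mathcal{P})$. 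Hence $(f,g)\in \mathcal{L}$ in $T_{\mathbb{S}(I)}(X,\mathcal{P})$.
\end{proof}
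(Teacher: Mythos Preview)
Your proof is correct and takes exactly the same approach as the paper, which simply states that the result follows directly from Lemma \ref{l-lemma}. You have just spelled out in full the two applications of that lemma (one for $L_f\leq L_g$ and one for $L_g\leq L_f$) that the paper leaves implicit.
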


\begin{proof}[\textbf{Proof}]
It follows directly from Lemma \ref{l-lemma}.	
\end{proof}


In 2005, Pei \cite[Theorem 3.2]{pei-ca05} gave a characterization of the relation $\mathcal{L}$ on $T(X,\mathcal{P})$. Using Theorem \ref{l-green}, we also provide another characterization of the relation $\mathcal{L}$ on $T(X,\mathcal{P})$ as follows.

\begin{proposition}
Let $f,g\in T(X,\mathcal{P})$. Then $(f,g)\in \mathcal{L}$ in $T(X,\mathcal{P})$ if and only if for each $i\in I$, there exist $j,k\in I$ such that $X_if\subseteq X_jg$ and $X_ig\subseteq X_kf$.
\end{proposition}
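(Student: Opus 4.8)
The plan is to deduce this result as a special case of Theorem \ref{l-green} applied to $\mathbb{S}(I)=T(I)$, using the fact that $T(X,\mathcal{P})=T_{T(I)}(X,\mathcal{P})$. Since $T(I)$ contains $\id_I$, the standing hypothesis of Section 5 is satisfied, so Theorem \ref{l-green} is available. The key observation is that when $\mathbb{S}(I)=T(I)$, the conditions in Theorem \ref{l-green} involving the character equations $\chi^{(f)}=\alpha\chi^{(g)}$ and $\chi^{(g)}=\beta\chi^{(f)}$ become vacuous in the following sense: they are automatically implied by the block-inclusion conditions $X_if\subseteq X_{i\alpha}g$ and $X_ig\subseteq X_{i\beta}f$. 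Indeed, if $X_if\subseteq X_{i\alpha}g\subseteq X_{(i\alpha)\chi^{(g)}}$, then since $X_if\subseteq X_{i\chi^{(f)}}$ and the blocks are disjoint, we get $i\chi^{(f)}=(i\alpha)\chi^{(g)}=i(\alpha\chi^{(g)})$; as this holds for every $i\in I$, we obtain $\chi^{(f)}=\alpha\chi^{(g)}$ for free, and symmetrically for $\chi^{(g)}=\beta\chi^{(f)}$.

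First I would prove the forward direction. Assume $(f,g)\in\mathcal{L}$ in $T(X,\mathcal{P})=T_{T(I)}(X,\mathcal{P})$. By Theorem \ref{l-green} there exist $\alpha,\beta\in T(I)$ with $X_if\subseteq X_{i\alpha}g$ and $X_ig\subseteq X_{i\beta}f$ for all $i\in I$. Given $i\in I$, set $j=i\alpha$ and $k=i\beta$; then $j,k\in I$ and the required inclusions $X_if\subseteq X_jg$, $X_ig\subseteq X_kf$ hold immediately. This direction is essentially trivial once Theorem \ref{l-green} is invoked.

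For the converse, assume that for each $i\in I$ there are $j_i,k_i\in I$ with $X_if\subseteq X_{j_i}g$ and $X_ig\subseteq X_{k_i}f$. Define $\alpha,\beta\in T(I)$ by $i\alpha=j_i$ and $i\beta=k_i$. Then $X_if\subseteq X_{i\alpha}g$ and $X_ig\subseteq X_{i\beta}f$ for all $i\in I$, so condition (ii) of Theorem \ref{l-green} holds. It remains to verify condition (i), namely $\chi^{(f)}=\alpha\chi^{(g)}$ and $\chi^{(g)}=\beta\chi^{(f)}$. As noted above, for any $i\in I$ we have $X_if\subseteq X_{i\alpha}g\subseteq X_{(i\alpha)\chi^{(g)}}$ while also $X_if\subseteq X_{i\chi^{(f)}}$; since distinct blocks are disjoint and $X_if\neq\varnothing$, this forces $i\chi^{(f)}=i(\alpha\chi^{(g)})$, hence $\chi^{(f)}=\alpha\chi^{(g)}$. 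The identity $\chi^{(g)}=\beta\chi^{(f)}$ follows by the symmetric argument. Therefore both conditions of Theorem \ref{l-green} are satisfied (with $\mathbb{S}(I)=T(I)$), so $(f,g)\in\mathcal{L}$ in $T(X,\mathcal{P})$.

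There is no real obstacle here; the only point requiring a moment's care is the observation that the character equations are automatic consequences of the block inclusions, which is why the clean statement for $T(X,\mathcal{P})$ drops all mention of the characters. I would make sure to state this disjointness-of-blocks argument explicitly rather than leaving it to the reader, since it is the conceptual heart of why the specialization simplifies.
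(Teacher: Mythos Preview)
Your proof is correct and follows essentially the same approach as the paper: both directions invoke Theorem \ref{l-green} with $\mathbb{S}(I)=T(I)$, and in the converse you define $\alpha,\beta$ from the chosen indices $j_i,k_i$ and then verify the character equations via the block-disjointness argument, exactly as the paper does. Your explicit remark that the character equations are automatic consequences of the block inclusions is the same observation the paper uses when it writes ``$X_if\subseteq X_{j_i}g$, and so $i\chi^{(f)} = j_i\chi^{(g)}$''.
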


\begin{proof}[\textbf{Proof}]
Assume that $(f,g)\in \mathcal{L}$ in $T(X,\mathcal{P})$, and let $i\in I$. By Theorem \ref{l-green}, since $T(X,\mathcal{P}) = T_{T(I)}(X,\mathcal{P})$, there exist $\alpha,\beta\in T(I)$ such that $X_if\subseteq X_{i\alpha}g$ and $X_ig\subseteq X_{i\beta}f$. Write $i\alpha=j$ and $i\beta=k$. Thus $X_if\subseteq X_jg$ and $X_ig\subseteq X_kf$.

\vspace{0.1cm}
Conversely, assume that the given conditions hold. For each $i\in I$, by hypothesis we choose $j_i,k_i\in I$ such that $X_if\subseteq X_{j_i}g$ and $X_ig\subseteq X_{k_i}f$. Define $\alpha, \beta\in T(I)$ by $i\alpha=j_i$ and $i\beta=k_i$. Then we have $X_if\subseteq X_{i\alpha}g$ and $X_ig\subseteq X_{i\beta}f$ for all $i\in I$. Next, we prove that $\chi^{(f)}=\alpha\chi^{(g)}$. For this, let $i \in I$. Then $X_if\subseteq X_{j_i}g$, and so $i\chi^{(f)} = j_i\chi^{(g)}$. Therefore we obtain $i\left(\alpha\chi^{(g)}\right)=(i\alpha)\chi^{(g)}=j_i\chi^{(g)}=i\chi^{(f)}$, which yields $\chi^{(f)}=\alpha\chi^{(g)}$. Similarly, we may prove that $\chi^{(g)}=\beta\chi^{(f)}$ by using $X_ig\subseteq X_{k_i}f$.
Thus, since $T_{T(I)}(X,\mathcal{P}) = T(X,\mathcal{P})$, we conclude from Theorem \ref{l-green} that $(f,g)\in \mathcal{L}$ in $T(X,\mathcal{P})$.	
\end{proof}	


To obtain a characterization of the relation $\mathcal{R}$ on $T_{\mathbb{S}(I)}(X,\mathcal{P})$, we require the following definition and Lemma \ref{r-lemma}.

\vspace{0.1cm}
Let $f,g\in T(X)$. We say that $\pi(f)$ \textit{refines} $\pi(g)$, denoted by $\pi(f)\preceq \pi(g)$, if $\ker(f)\subseteq \ker(g)$. We write $\pi(f)=\pi(g)$ if $\pi(f)\preceq \pi(g)$ and $\pi(g)\preceq \pi(f)$.

\begin{lemma}\label{r-lemma}
Let $f,g\in T_{\mathbb{S}(I)}(X,\mathcal{P})$. Then $R_f\leq R_g$ in $T_{\mathbb{S}(I)}(X,\mathcal{P})$ if and only if $R_{\chi^{(f)}}\leq R_{\chi^{(g)}}$ in $\mathbb{S}(I)$ and $\pi(g)\preceq \pi(f)$.
\end{lemma}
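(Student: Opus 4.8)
The plan is to mimic the pattern already established for the $\mathcal{L}$-relation: first prove the one-sided statement for $\leq$, then combine. For the forward direction, assume $R_f\leq R_g$ in $T_{\mathbb{S}(I)}(X,\mathcal{P})$, so $f=gh$ for some $h\in T_{\mathbb{S}(I)}(X,\mathcal{P})$. Applying the homomorphism $\varphi$ of Proposition \ref{epimorphism} (equivalently Lemma \ref{rak-lemma-2.3}) gives $\chi^{(f)}=\chi^{(g)}\chi^{(h)}$ in $\mathbb{S}(I)$, which immediately yields $R_{\chi^{(f)}}\leq R_{\chi^{(g)}}$. For the kernel condition, if $(x,y)\in\ker(g)$ then $xg=yg$, hence $xf=x(gh)=(xg)h=(yg)h=yf$, so $(x,y)\in\ker(f)$; that is $\ker(g)\subseteq\ker(f)$, i.e. $\pi(g)\preceq\pi(f)$.

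\textbf{The converse.} This is the substantive half. Assume $R_{\chi^{(f)}}\leq R_{\chi^{(g)}}$ in $\mathbb{S}(I)$, so there is $\gamma\in\mathbb{S}(I)$ with $\chi^{(f)}=\chi^{(g)}\gamma$, and assume $\ker(g)\subseteq\ker(f)$. I must construct $h\in T_{\mathbb{S}(I)}(X,\mathcal{P})$ with $f=gh$ and $\chi^{(h)}=\gamma$. The natural idea: since $\ker(g)\subseteq\ker(f)$, there is a well-defined map $\bar h\colon Xg\to X$ with $(xg)\bar h=xf$ for all $x\in X$; the difficulty is extending $\bar h$ to all of $X$ so that it preserves $\mathcal{P}$ \emph{and} has character exactly $\gamma$. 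The key compatibility observation is: for $i\in I$ and $x\in X_i$, we have $xg\in X_{i\chi^{(g)}}$ and $xf\in X_{i\chi^{(f)}}=X_{(i\chi^{(g)})\gamma}$, so on the block $X_{i\chi^{(g)}}\cap Xg$ the partial map $\bar h$ already sends into $X_{(i\chi^{(g)})\gamma}$. Thus for each $j\in I\chi^{(g)}$, $\bar h$ restricted to $X_j\cap Xg$ maps into $X_{j\gamma}$. To extend, I fix for each $k\in I$ a point $x_k\in X_k$, and define $h$ on $X_j\setminus Xg$ (for $j\in I\chi^{(g)}$) by sending everything to $x_{j\gamma}$, and on $X_j$ for $j\in I\setminus I\chi^{(g)}$ by sending everything to $x_{j\gamma}$. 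Then $X_j h\subseteq X_{j\gamma}$ for every $j\in I$, so $\chi^{(h)}=\gamma\in\mathbb{S}(I)$, giving $h\in T_{\mathbb{S}(I)}(X,\mathcal{P})$; and $f=gh$ holds because for any $x\in X$, $x(gh)=(xg)h=(xg)\bar h=xf$ since $xg\in Xg$. Hence $R_f\leq R_g$.

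\textbf{Main obstacle.} The one genuine subtlety is ensuring $\chi^{(h)}=\gamma$ rather than some other map with $\chi^{(g)}\chi^{(h)}=\chi^{(f)}$; this is why I must define $h$ on the "extra" blocks and on $X_j\setminus Xg$ using the prescribed images $x_{j\gamma}$, so that \emph{every} block $X_j$ is mapped into $X_{j\gamma}$ and not merely into some block consistent with $\chi^{(f)}$. One should also double-check that $\bar h$ is well defined precisely because $\ker(g)\subseteq\ker(f)$ — this is exactly where that hypothesis is used — and that the block-preservation on $X_j\cap Xg$ follows from $\chi^{(f)}=\chi^{(g)}\gamma$ as noted above. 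Everything else is routine bookkeeping; no deep structural input beyond Lemma \ref{rak-lemma-2.3} is needed.
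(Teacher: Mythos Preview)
Your proof is correct and follows essentially the same approach as the paper: both directions match, and in the converse you construct the same map $h$ (the paper writes $xh=x'f$ for a chosen $x'\in xg^{-1}$ on $Xg$ and $xh=x_{i\beta}$ off $Xg$, which is exactly your $\bar h$ extended by the default values $x_{j\gamma}$). Your explicit verification that $\bar h$ sends $X_j\cap Xg$ into $X_{j\gamma}$ via $\chi^{(f)}=\chi^{(g)}\gamma$ is the content of the paper's terse remark ``Since $\chi^{(f)}=\chi^{(g)}\beta$, we see that $\chi^{(h)}=\beta$.''
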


\begin{proof}[\textbf{Proof}]
Assume that $R_f\leq R_g$ in $T_{\mathbb{S}(I)}(X,\mathcal{P})$. Then $f=gh$ for some $h\in T_{\mathbb{S}(I)}(X,\mathcal{P})$. This gives $\chi^{(f)}=\chi^{(gh)}=\chi^{(g)}\chi^{(h)}$ by Lemma \ref{rak-lemma-2.3}. Since $\chi^{(f)},\chi^{(g)},\chi^{(h)}\in \mathbb{S}(I)$, it follows that $R_{\chi^{(f)}}\leq R_{\chi^{(g)}}$ in $\mathbb{S}(I)$. Next, since $T_{\mathbb{S}(I)}(X,\mathcal{P})\subseteq T(X)$, we have $f,g,h\in T(X)$. Therefore $\ker(g)\subseteq \ker(f)$ by Theorem \ref{green-Tx}(2), and hence $\pi(g)\preceq \pi(f)$.

\vspace{0.1cm}
Conversely, assume that the given conditions hold. To prove $R_f\leq R_g$ in $T_{\mathbb{S}(I)}(X,\mathcal{P})$, we will construct $h \in T_{\mathbb{S}(I)}(X,\mathcal{P})$ such that $f = gh$. Since $R_{\chi^{(f)}}\leq R_{\chi^{(g)}}$ in $\mathbb{S}(I)$, we have $\chi^{(f)}=\chi^{(g)}\beta$ for some $\beta\in \mathbb{S}(I)$. Now, for each $i\in I$, fix $x_i\in X_i$; for each $x\in Xg$, fix $x'\in xg^{-1}$. Define $h\in T(X)$ by:
\begin{align*}
xh=
\begin{cases}
	x'f        & \text{if $x\in X_i\cap Xg$};\\
	x_{i\beta} &  \text{if $x\in X_i\setminus Xg$.}
\end{cases}
\end{align*}
It is clear that $h$ is well-defined, since $\pi(g)\preceq \pi(f)$. Since $\chi^{(f)}=\chi^{(g)}\beta$, we see that $\chi^{(h)}=\beta$, and so $h\in T_{\mathbb{S}(I)}(X,\mathcal{P})$. To prove $f=gh$, let $x\in X$. Then $x\in X_i$ for some $i\in I$. Therefore, since $x'\in xg^{-1}$ and $xg^{-1}\in \pi(f)$, we obtain $x(gh)=(xg)h=x'f=xf$, which yields $f=gh$. Hence $R_f\leq R_g$ in $T_{\mathbb{S}(I)}(X,\mathcal{P})$.
\end{proof}	

\begin{theorem}\label{r-green}
Let $f,g\in T_{\mathbb{S}(I)}(X,\mathcal{P})$. Then $(f,g)\in \mathcal{R}$ in $T_{\mathbb{S}(I)}(X,\mathcal{P})$ if and only if $\left(\chi^{(f)},\chi^{(g)}\right)\in \mathcal{R}$ in $\mathbb{S}(I)$ and $\pi(f)=\pi(g)$.
\end{theorem}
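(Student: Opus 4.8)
The plan is to derive this statement directly from Lemma \ref{r-lemma} by unwinding the definition of Green's relation $\mathcal{R}$. Recall that since $\mathbb{S}(I)$ contains $\id_I$, the semigroup $T_{\mathbb{S}(I)}(X,\mathcal{P})$ contains $\id_X$, so $(f,g)\in\mathcal{R}$ in $T_{\mathbb{S}(I)}(X,\mathcal{P})$ is equivalent to the conjunction $R_f\leq R_g$ and $R_g\leq R_f$. First I would apply Lemma \ref{r-lemma} to $R_f\leq R_g$ to obtain $R_{\chi^{(f)}}\leq R_{\chi^{(g)}}$ in $\mathbb{S}(I)$ and $\pi(g)\preceq\pi(f)$; then apply it again to $R_g\leq R_f$ to obtain $R_{\chi^{(g)}}\leq R_{\chi^{(f)}}$ in $\mathbb{S}(I)$ and $\pi(f)\preceq\pi(g)$. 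Conjoining the two conclusions, the pair of inequalities $R_{\chi^{(f)}}\leq R_{\chi^{(g)}}$ and $R_{\chi^{(g)}}\leq R_{\chi^{(f)}}$ says exactly $(\chi^{(f)},\chi^{(g)})\in\mathcal{R}$ in $\mathbb{S}(I)$, while $\pi(f)\preceq\pi(g)$ together with $\pi(g)\preceq\pi(f)$ is, by the definition introduced just before Lemma \ref{r-lemma}, precisely $\pi(f)=\pi(g)$. This yields the forward implication, and since every step above is an ``if and only if'', reading the argument in reverse gives the converse.

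The only point requiring any care is keeping the directions of the refinement preorder straight: by Lemma \ref{r-lemma} the relation $R_f\leq R_g$ corresponds to $\pi(g)\preceq\pi(f)$ (that is, $\ker(g)\subseteq\ker(f)$), so when both $R_f\leq R_g$ and $R_g\leq R_f$ hold we obtain the two opposite refinements that together amount to equality of the induced partitions. No genuine obstacle is expected here; the substance of the result is already contained in Lemma \ref{r-lemma}, and Theorem \ref{r-green} is essentially its symmetrization, in exact parallel with how Theorem \ref{l-green} follows from Lemma \ref{l-lemma}.
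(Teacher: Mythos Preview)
Your proposal is correct and matches the paper's approach exactly: the paper's proof of Theorem \ref{r-green} is the single sentence ``It follows directly from Lemma \ref{r-lemma},'' and your argument simply spells out that symmetrization in detail.
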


\begin{proof}[\textbf{Proof}]
It follows directly from Lemma \ref{r-lemma}.	
\end{proof}

Recall that $T(X,\mathcal{P})=T_{T(I)}(X,\mathcal{P})$. Using Theorem \ref{r-green} and Theorem \ref{green-Tx}(2), we immediately have the following characterization of the relation $\mathcal{R}$ on $T(X,\mathcal{P})$, which was first appeared in \cite[Theorem 3.1]{pei-ca05}.

\begin{corollary}
Let $f,g\in T(X,\mathcal{P})$. Then $(f,g)\in \mathcal{R}$ in $T(X,\mathcal{P})$ if and only if $\pi\left(\chi^{(f)}\right)=\pi\left(\chi^{(g)}\right)$ and $\pi(f)=\pi(g)$.
\end{corollary}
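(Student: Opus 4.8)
The plan is to derive this corollary as a direct specialization of Theorem~\ref{r-green} to the case $\mathbb{S}(I) = T(I)$, using the fact that $T(X,\mathcal{P}) = T_{T(I)}(X,\mathcal{P})$. The only substantive thing to check is that the abstract condition ``$(\chi^{(f)},\chi^{(g)}) \in \mathcal{R}$ in $\mathbb{S}(I)$'' becomes the concrete kernel condition ``$\pi(\chi^{(f)}) = \pi(\chi^{(g)})$'' when $\mathbb{S}(I) = T(I)$, and this is exactly Theorem~\ref{green-Tx}(2) applied to the transformations $\chi^{(f)}, \chi^{(g)} \in T(I)$. So the proof is essentially a two-line substitution.

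First I would note that since $T(X,\mathcal{P}) = T_{T(I)}(X,\mathcal{P})$, the hypothesis of this section (that $\mathbb{S}(I)$ contains $\id_I$) is satisfied, as $\id_I \in T(I)$, so Theorem~\ref{r-green} applies with $\mathbb{S}(I) = T(I)$. Thus $(f,g) \in \mathcal{R}$ in $T(X,\mathcal{P})$ if and only if $(\chi^{(f)},\chi^{(g)}) \in \mathcal{R}$ in $T(I)$ and $\pi(f) = \pi(g)$. Next I would invoke Theorem~\ref{green-Tx}(2) with $X$ replaced by $I$: for $\chi^{(f)}, \chi^{(g)} \in T(I)$, we have $(\chi^{(f)},\chi^{(g)}) \in \mathcal{R}$ in $T(I)$ if and only if $\ker(\chi^{(f)}) = \ker(\chi^{(g)})$, which by definition of $\pi(\cdot)$ is the same as $\pi(\chi^{(f)}) = \pi(\chi^{(g)})$. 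Substituting this equivalence into the statement from Theorem~\ref{r-green} yields the claim.

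There is no real obstacle here; the only point requiring a moment's care is making sure the Green's relation $\mathcal{R}$ is being read in the correct semigroup at each stage (in $T(X,\mathcal{P})$, then in $T(I)$), and that Theorem~\ref{green-Tx} is being applied to $T(I)$ rather than $T(X)$ — but this is purely bookkeeping. I would write the proof in the same terse style as the corollaries following Theorem~\ref{l-green} and Theorem~\ref{r-green}, roughly: ``Recall that $T(X,\mathcal{P}) = T_{T(I)}(X,\mathcal{P})$. By Theorem~\ref{r-green}, $(f,g) \in \mathcal{R}$ in $T(X,\mathcal{P})$ if and only if $(\chi^{(f)},\chi^{(g)}) \in \mathcal{R}$ in $T(I)$ and $\pi(f) = \pi(g)$. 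By Theorem~\ref{green-Tx}(2) applied to $T(I)$, we have $(\chi^{(f)},\chi^{(g)}) \in \mathcal{R}$ in $T(I)$ if and only if $\ker(\chi^{(f)}) = \ker(\chi^{(g)})$, that is, $\pi(\chi^{(f)}) = \pi(\chi^{(g)})$. Combining these gives the result.''
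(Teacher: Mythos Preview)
Your proposal is correct and matches the paper's approach exactly: the paper states the corollary as an immediate consequence of Theorem~\ref{r-green} together with Theorem~\ref{green-Tx}(2), using $T(X,\mathcal{P}) = T_{T(I)}(X,\mathcal{P})$, and gives no further proof. Your write-up simply makes explicit the one-line substitution the paper leaves implicit.
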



\vspace{0.1cm}
We require the following result that is useful in establishing the relation  $\mathcal{D}$ on $T_{\mathbb{S}(I)}(X,\mathcal{P})$.
\begin{lemma}\label{k-bg-k-sm}
Let $f,g\in T_{\mathbb{S}(I)}(X,\mathcal{P})$.
\begin{enumerate}
	\item[\rm(i)] If $L_f\leq L_g$ in $T_{\mathbb{S}(I)}(X,\mathcal{P})$, then $L_{\chi^{(f)}}\leq L_{\chi^{(g)}}$ in $\mathbb{S}(I)$. Subsequently, if $(f,g)\in \mathcal{L}$ in $T_{\mathbb{S}(I)}(X,\mathcal{P})$, then $(\chi^{(f)},\chi^{(g)})\in \mathcal{L}$ in $\mathbb{S}(I)$.
	\vspace{0.05cm}
	\item[\rm(ii)] If $R_f\leq R_g$ in $T_{\mathbb{S}(I)}(X,\mathcal{P})$, then $R_{\chi^{(f)}}\leq R_{\chi^{(g)}}$ in $\mathbb{S}(I)$. Subsequently, if $(f,g)\in \mathcal{R}$ in $T_{\mathbb{S}(I)}(X,\mathcal{P})$, then $(\chi^{(f)},\chi^{(g)})\in \mathcal{R}$ in $\mathbb{S}(I)$.
\end{enumerate}
\end{lemma}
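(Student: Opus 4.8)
The statement is essentially a ``forgetful'' compatibility between the partial Green pre-orders on $T_{\mathbb{S}(I)}(X,\mathcal{P})$ and those on $\mathbb{S}(I)$, obtained by applying the epimorphism $\varphi$ of Proposition \ref{epimorphism}. The plan is to prove parts (i) and (ii) by transporting the defining witnesses along $\varphi$. For (i), I would start from $L_f\leq L_g$ in $T_{\mathbb{S}(I)}(X,\mathcal{P})$, which by definition gives $h\in T_{\mathbb{S}(I)}(X,\mathcal{P})$ (or $h$ in the monoid obtained by adjoining an identity) with $f=hg$. Applying Lemma \ref{rak-lemma-2.3} then yields $\chi^{(f)}=\chi^{(hg)}=\chi^{(h)}\chi^{(g)}$, and since $\chi^{(h)}\in\mathbb{S}(I)$ this exhibits $\chi^{(f)}$ as a left multiple of $\chi^{(g)}$ in $\mathbb{S}(I)$, i.e.\ $L_{\chi^{(f)}}\leq L_{\chi^{(g)}}$ in $\mathbb{S}(I)$. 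The subsequent $\mathcal{L}$-statement follows by applying this to both $L_f\leq L_g$ and $L_g\leq L_f$.

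For (ii) the argument is the exact mirror image: $R_f\leq R_g$ in $T_{\mathbb{S}(I)}(X,\mathcal{P})$ gives $h$ with $f=gh$, so $\chi^{(f)}=\chi^{(g)}\chi^{(h)}$ by Lemma \ref{rak-lemma-2.3}, whence $R_{\chi^{(f)}}\leq R_{\chi^{(g)}}$ in $\mathbb{S}(I)$, and the $\mathcal{R}$-version follows by symmetry. Note that part (ii) is actually already contained in the forward direction of Lemma \ref{r-lemma}, so I could alternatively just cite that; but for uniformity with (i) I would give the one-line multiplicative argument directly.

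The only point requiring a little care — and the main (minor) obstacle — is the bookkeeping around adjoined identities in the definition of the $\mathcal{L}$- and $\mathcal{R}$-pre-orders: $L_f\leq L_g$ formally means $f\in S^1 g$, so $h$ may be an adjoined identity. Since we are working under the standing assumption of this section that $\mathbb{S}(I)$ (hence $T_{\mathbb{S}(I)}(X,\mathcal{P})$) contains an identity, $S^1=S$ and this issue evaporates; alternatively one observes that if $h$ is the adjoined identity then $f=g$ and the conclusion is trivial, while if $h\in T_{\mathbb{S}(I)}(X,\mathcal{P})$ then $\chi^{(h)}\in\mathbb{S}(I)$ and Lemma \ref{rak-lemma-2.3} applies. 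Beyond this the proof is a direct application of Lemma \ref{rak-lemma-2.3} together with the fact that $\varphi$ is a homomorphism, so I would simply write ``It follows directly from Lemma \ref{rak-lemma-2.3}'' in the style of the proofs of Theorems \ref{l-green} and \ref{r-green}, or expand it to the three or four lines sketched above if more detail is desired.
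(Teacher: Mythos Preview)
Your proposal is correct and matches the paper's proof essentially line for line: the paper also writes $f=hg$ (resp.\ $f=gh$), applies Lemma \ref{rak-lemma-2.3} to get $\chi^{(f)}=\chi^{(h)}\chi^{(g)}$ (resp.\ $\chi^{(f)}=\chi^{(g)}\chi^{(h)}$), and concludes, handling (ii) as the dual of (i). Your remark about the adjoined-identity bookkeeping is a nice extra bit of care that the paper leaves implicit via the standing assumption $\id_I\in\mathbb{S}(I)$.
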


\begin{proof}[\textbf{Proof}]\
\begin{enumerate}
	\item[\rm(i)] If $L_f\leq L_g$ in $T_{\mathbb{S}(I)}(X,\mathcal{P})$, then $f=hg$ for some $h\in T_{\mathbb{S}(I)}(X,\mathcal{P})$. This gives $\chi^{(f)}=\chi^{(hg)} =\chi^{(h)}\chi^{(g)}$ by Lemma \ref{rak-lemma-2.3}. Since $\chi^{(f)},\chi^{(g)},\chi^{(h)}\in \mathbb{S}(I)$, it follows that $L_{\chi^{(f)}}\leq L_{\chi^{(g)}}$ in $\mathbb{S}(I)$. The rest of the proof is immediate from the previous statements.
	
	\vspace{0.05cm}
	\item[\rm(ii)] It is the dual of the proof above.
\end{enumerate}
\end{proof}		
For every $A\subseteq X$ and every $f\in T(X)$, let $\pi_A(f)=\{M\in \pi(f)\colon M\cap A\neq \varnothing\}$. If $A=X$, then we write $\pi(f)$ instead of $\pi_X(f)$.

\vspace{1.0mm}
The following result characterizes the relation $\mathcal{D}$ on $T_{\mathbb{S}(I)}(X,\mathcal{P})$.
\begin{theorem}\label{d-green}
Let $f,g\in T_{\mathbb{S}(I)}(X,\mathcal{P})$. Then $(f,g)\in \mathcal{D}$ in $T_{\mathbb{S}(I)}(X,\mathcal{P})$ if and only if there exist $\alpha,\beta,\gamma\in \mathbb{S}(I)$ and a bijection $\varphi\colon \pi(f)\to \pi(g)$ such that:
\begin{enumerate}
	\item[\rm(i)] $\chi^{(f)}=\alpha\gamma$, $\gamma=\beta\chi^{(f)}$, and $\left(\gamma,\chi^{(g)}\right)\in \mathcal{R}$ in $\mathbb{S}(I)$;
	
	\item[\rm(ii)] $\pi_{X_i}(f)\varphi\subseteq \pi_{X_{i\alpha}}(g)$  for all $i\in I$;
	
	\item[\rm(iii)] $\pi_{X_i}(g)\varphi^{-1}\subseteq \pi_{X_{i\beta}}(f)$ for all $i\in I$.
\end{enumerate}
\end{theorem}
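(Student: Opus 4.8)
The plan is to reduce the statement to the two one-sided lemmas (Lemma~\ref{l-lemma} and Lemma~\ref{r-lemma}) together with the observation that $\mathcal{D} = \mathcal{L}\circ\mathcal{R} = \mathcal{R}\circ\mathcal{L}$. Concretely, $(f,g)\in\mathcal{D}$ holds if and only if there exists $k\in T_{\mathbb{S}(I)}(X,\mathcal{P})$ with $(f,k)\in\mathcal{L}$ and $(k,g)\in\mathcal{R}$. The element $\gamma$ in the statement should turn out to be $\chi^{(k)}$, and the bijection $\varphi\colon\pi(f)\to\pi(g)$ should come from the standard fact that $\mathcal{L}$-related maps have the same image while $\mathcal{R}$-related maps have the same kernel partition: since $(f,k)\in\mathcal{L}$ gives $Xf = Xk$ (Theorem~\ref{green-Tx}(1)) and $(k,g)\in\mathcal{R}$ gives $\pi(k) = \pi(g)$ (Theorem~\ref{green-Tx}(2)), one gets the bijection $\pi(f)\to Xf = Xk \to \pi(k) = \pi(g)$; equivalently, one fixes the connecting element $k$ directly.

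For the forward direction, I would take $k$ witnessing $\mathcal{D}$ with $f\,\mathcal{L}\,k\,\mathcal{R}\,g$, set $\gamma = \chi^{(k)}$, and unpack the two lemmas. From $(f,k)\in\mathcal{L}$ and Lemma~\ref{l-lemma} (applied in both directions) we get $\alpha,\delta\in\mathbb{S}(I)$ with $\chi^{(f)} = \alpha\gamma$ and $\gamma = \delta\chi^{(f)}$; rename $\delta$ as $\beta$. From $(k,g)\in\mathcal{R}$ and Lemma~\ref{r-lemma} we get $(\chi^{(k)},\chi^{(g)}) = (\gamma,\chi^{(g)})\in\mathcal{R}$ in $\mathbb{S}(I)$ and $\pi(k) = \pi(g)$. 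This gives (i). For $\varphi$, note that $f\,\mathcal{L}\,k$ in $T(X)$ forces $Xf = Xk$, so the map sending $M\in\pi(f)$ to the unique block of $\pi(k)$ whose image under the natural identification with $Xk$ agrees with $Mf$ is a bijection $\pi(f)\to\pi(k) = \pi(g)$; more efficiently, writing $f = hk$ with $h\in T_{\mathbb{S}(I)}(X,\mathcal{P})$ (from $L_f\le L_k$) and $k = f h'$ (from $R_k\le R_f$, since $\mathcal R$ symmetric $\Rightarrow R_k \le R_g \le$ etc.—more carefully $k\,\mathcal R\,g$ gives $k = g\ell$, $g = k\ell'$), one defines $\varphi$ on blocks via the bijection between $Xf$ and $\pi(g)$ induced by the equalities above. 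Then the containments $X_i f\subseteq X_{i\alpha}k$ from Lemma~\ref{l-lemma} translate into $\pi_{X_i}(f)\varphi\subseteq\pi_{X_{i\alpha}}(g)$, giving (ii), and symmetrically (iii) from the other half of Lemma~\ref{l-lemma}. I expect the bookkeeping of which block of $g$ corresponds to which block of $f$, and checking that the $X_{i\alpha}$-index is the right one (rather than $X_{i\alpha\gamma}$ or similar), to be the fiddly part; the key sanity check is that $X_i f\subseteq X_j k$ iff $j$ is the common value of $i\chi^{(f)}$ via $\chi^{(f)} = \alpha\gamma$, so $j = i\alpha$ is forced.

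For the converse, assume $\alpha,\beta,\gamma$ and $\varphi$ are given with (i)--(iii). The plan is to build the intermediate element $k\in T_{\mathbb{S}(I)}(X,\mathcal{P})$ explicitly: it should have $\chi^{(k)} = \gamma$, image $Xk = Xf$ (so that $f\,\mathcal{L}\,k$), and kernel partition $\pi(k) = \pi(g)$ (so that $k\,\mathcal{R}\,g$). Using $\varphi\colon\pi(f)\to\pi(g)$ and a choice of element in each block, define $k$ on the block $M\in\pi(f)$ so that $k$ collapses $M$ exactly as $\varphi$ prescribes but lands in $Xf$; condition (ii) guarantees $M\subseteq X_i\Rightarrow Mk\subseteq X_{i\alpha}$ is consistent with $\chi^{(k)} = \gamma$, because $\chi^{(f)} = \alpha\gamma$ pins down where the $f$-image of $X_i$ sits. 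Then verify: (a) $k\in T(X,\mathcal{P})$ with $\chi^{(k)}=\gamma\in\mathbb{S}(I)$, hence $k\in T_{\mathbb{S}(I)}(X,\mathcal{P})$; (b) $L_f\le L_k$ and $L_k\le L_f$ via Lemma~\ref{l-lemma}, using $\chi^{(f)} = \alpha\gamma$, the reverse relation $\gamma = \beta\chi^{(f)}$, and the containments (ii), (iii) rewritten at the level of blocks; (c) $R_k\le R_g$ and $R_g\le R_k$ via Lemma~\ref{r-lemma}, using $(\gamma,\chi^{(g)})\in\mathcal{R}$ in $\mathbb{S}(I)$ and $\pi(k)=\pi(g)$ (which holds by construction since $\varphi$ is a bijection onto $\pi(g)$). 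Concluding, $f\,\mathcal{L}\,k\,\mathcal{R}\,g$ so $(f,g)\in\mathcal{D}$.

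The main obstacle I anticipate is not any single hard idea but the careful construction of $k$ in the converse and the verification that all four one-sided conditions of Lemmas~\ref{l-lemma} and~\ref{r-lemma} hold simultaneously with the same $k$ — in particular making sure the character of $k$ is exactly $\gamma$ and that the block-level containments match the index shifts by $\alpha$ and $\beta$; a secondary subtlety is handling blocks $X_i$ that are missed by $Xf$ or by $Xg$ (i.e. $i\notin I\chi^{(f)}$ or $i\notin I\chi^{(g)}$), where $\pi_{X_i}(f)$ or $\pi_{X_i}(g)$ can be empty and the definitions must still be made coherently using fixed base points $x_i\in X_i$, exactly as in the proofs of the preceding lemmas.
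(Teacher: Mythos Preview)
Your approach is essentially the same as the paper's: both directions go via an intermediate element $k$ with $f\,\mathcal{L}\,k\,\mathcal{R}\,g$, set $\gamma=\chi^{(k)}$, extract $\alpha,\beta$ from the $\mathcal{L}$-characterisation, and build $\varphi$ from the identifications $Xf=Xk$ and $\pi(k)=\pi(g)$; for the converse the paper gives the explicit formula $xk=y$ where $\bigl((xgg^{-1})\varphi^{-1}\bigr)f=\{y\}$, which realises exactly your ``image $Xf$, kernel $\pi(g)$, character $\gamma$'' specification. One small correction: your secondary worry about empty $\pi_{X_i}(f)$ is unfounded, since every $x\in X_i$ lies in some $\ker(f)$-class, so $\pi_{X_i}(f)\neq\varnothing$ for all $i$; no base-point patching is needed there.
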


\begin{proof}[\textbf{Proof}]
Assume that $(f,g)\in \mathcal{D}$ in $T_{\mathbb{S}(I)}(X,\mathcal{P})$. Then, since $\mathcal{D}=\mathcal{L}\circ \mathcal{R}$, there exists $h\in T_{\mathbb{S}(I)}(X,\mathcal{P})$ such that $(f,h)\in \mathcal{L}$ and $(h,g)\in \mathcal{R}$ in $T_{\mathbb{S}(I)}(X,\mathcal{P})$. Write $\chi^{(h)}=\gamma$. Clearly $\gamma \in \mathbb{S}(I)$. Since $(f,h)\in \mathcal{L}$ in $T_{\mathbb{S}(I)}(X,\mathcal{P})$, by Theorem \ref{l-green} there exist $\alpha,\beta\in \mathbb{S}(I)$ such that $\chi^{(f)}=\alpha\gamma$ and $\gamma=\beta\chi^{(f)}$; $X_if\subseteq X_{i\alpha}h$ and $X_ih\subseteq X_{i\beta}f$ for all $i\in I$. Also, since $(h,g)\in \mathcal{R}$ in $T_{\mathbb{S}(I)}(X,\mathcal{P})$, we get $(\gamma,\chi^{(g)})\in \mathcal{R}$ in $\mathbb{S}(I)$ by Lemma \ref{k-bg-k-sm}(ii). Thus $\rm(i)$ holds.

\vspace{0.05cm}
To prove (ii) and (iii), we first define a mapping $\varphi\colon \pi(f)\to \pi(g)$ by $(af^{-1})\varphi=ah^{-1}$ whenever $a\in Xf$. Since $(f,h)\in \mathcal{L}$ in $T_{\mathbb{S}(I)}(X,\mathcal{P})$, we have $(f,h)\in \mathcal{L}$ in $T(X)$, and so $Xf=Xh$ by Theorem \ref{green-Tx}(1). Since $a\in Xf$, it follows that $a\in Xh$, and so $ah^{-1}\in \pi(h)$. Also, since $(h,g)\in \mathcal{R}$ in $T_{\mathbb{S}(I)}(X,\mathcal{P})$, we have $(h,g)\in \mathcal{R}$ in $T(X)$, and so $\pi(h)=\pi(g)$ by Theorem \ref{green-Tx}(2). Therefore, since $ah^{-1}\in \pi(h)$, we obtain $ah^{-1}\in \pi(g)$. Thus $\varphi$ is well-defined. Moreover, since $Xf=Xh$ and $\pi(h)=\pi(g)$, it is a routine matter to verify that $\varphi$ is bijective.
	
\vspace{0.05cm}
Now	to prove $\rm(ii)$, let $i\in I$ and let $A\in \pi_{X_i}(f)$. Then $A\cap X_i\neq \varnothing$. Write $Af=\{a\}$. Since $A\cap X_i\neq \varnothing$ and $f$ stabilizes $\mathcal{P}$, it is obvious that $a\in X_if$. Note that $X_if\subseteq X_{i\alpha}h$, and so $a\in X_{i\alpha}h$. This gives $ah^{-1}\cap X_{i\alpha}\neq \varnothing$. Therefore $A\varphi \cap X_{i\alpha}=(af^{-1})\varphi\cap X_{i\alpha}=ah^{-1}\cap X_{i\alpha}\neq \varnothing$, which yields $A\varphi\in \pi_{X_{i\alpha}}(g)$. Hence $\pi_{X_i}(f)\varphi\subseteq \pi_{X_{i\alpha}}(g)$, and thus $\rm(ii)$ holds.
	
\vspace{0.05cm}
Next to prove $\rm(iii)$, let $i\in I$ and let $B\in \pi_{X_i}(g)$. Then  $B\cap X_i\neq \varnothing$. Since $\pi(h)=\pi(g)$, we get $\pi_{X_i}(h)=\pi_{X_i}(g)$. It follows that $B\in \pi_{X_i}(h)$. Write $Bh=\{b\}$. Clearly $b\in X_ih$. Note that $X_ih\subseteq X_{i\beta}f$, and so $b\in X_{i\beta}f$. This gives $bf^{-1}\cap X_{i\beta}\neq \varnothing$. Therefore $B\varphi^{-1}\cap X_{i\beta}= (bh^{-1})\varphi^{-1}\cap X_{i\beta}=bf^{-1}\cap X_{i\beta}\neq \varnothing$, which yields $B\varphi^{-1}\in \pi_{X_{i\beta}}(f)$. Hence $\pi_{X_i}(g)\varphi^{-1}\subseteq \pi_{X_{i\beta}}(f)$, and thus $\rm(iii)$ holds.

\vspace{0.1cm}
Conversely, assume that the given conditions hold. To prove $(f,g)\in \mathcal{D}$ in $T_{\mathbb{S}(I)}(X,\mathcal{P})$, we will construct $h\in T_{\mathbb{S}(I)}(X,\mathcal{P})$ such that $(f,h)\in \mathcal{L}$ and $(h,g)\in \mathcal{R}$ in $T_{\mathbb{S}(I)}(X,\mathcal{P})$. Define $h\in T(X)$ by $xh=y$ whenever $\big((xgg^{-1})\varphi^{-1}\big)f=\{y\}$. First, we show that $\chi^{(h)}=\gamma$. For this, let $i\in I$ and let $x\in X_i$. Clearly $xgg^{-1}\cap X_i\neq \varnothing$, and so $xgg^{-1}\in \pi_{X_i}(g)$. This gives $(xgg^{-1})\varphi^{-1} \in \pi_{X_{i\beta}}(f)$ by $\rm(iii)$, and so $(xgg^{-1})\varphi^{-1}\cap X_{i\beta}\neq \varnothing$. By (i), since $\gamma=\beta\chi^{(f)}$, we have $(i\beta)\chi^{(f)}=i\gamma$, and so $\big((xgg^{-1})\varphi^{-1}\big)f = \{y\}\subseteq X_{i\gamma}$. By the definition of $h$, we therefore have $xh=y\in X_{i\gamma}$. This gives $X_ih\subseteq X_{i\gamma}$, and so $i\chi^{(h)}=i\gamma$. Hence $\chi^{(h)}=\gamma$, and thus $h\in T_{\mathbb{S}(I)}(X,\mathcal{P})$.

\vspace{0.05cm}
It is a routine matter to verify that $\pi(h)=\pi(g)$. By (i), we also get $\left(\chi^{(h)},\chi^{(g)}\right)\in \mathcal{R}$ in $\mathbb{S}(I)$. Therefore  $(h,g)\in \mathcal{R}$ in $T_{\mathbb{S}(I)}(X,\mathcal{P})$ by Theorem \ref{r-green}.

\vspace{0.05cm}
We only require to show that $(f,h)\in \mathcal{L}$ in $T_{\mathbb{S}(I)}(X,\mathcal{P})$. Note that $\chi^{(h)}=\gamma$. By $\rm(i)$, we have $\chi^{(f)}=\alpha\chi^{(h)}$ and $\chi^{(h)}=\beta\chi^{(f)}$. Therefore by Theorem \ref{l-green}, it remains to prove that $X_if\subseteq X_{i\alpha}h$ and $X_ih\subseteq X_{i\beta}f$ for all $i\in I$. For this, let $i\in I$. To prove $X_if\subseteq X_{i\alpha}h$, let $y\in X_if$. Then $yf^{-1}\cap X_i\neq \varnothing$, and so $yf^{-1}\in \pi_{X_i}(f)$. Therefore by $\rm(ii)$, we have $(yf^{-1})\varphi\in \pi_{X_{i\alpha}}(g)$. This yields $(yf^{-1})\varphi\cap X_{i\alpha}\neq \varnothing$. Choose $z\in (yf^{-1})\varphi\cap X_{i\alpha}$. Then $zgg^{-1} \cap X_{i\alpha}\neq \varnothing$, and so $zgg^{-1}\in\pi_{X_{i\alpha}}(g)$.
Therefore, since $z\in zgg^{-1}$ and $z\in (yf^{-1})\varphi$, we obtain $(yf^{-1})\varphi=zgg^{-1}$. This gives $\big((zgg^{-1})\varphi^{-1}\big)f=\{y\}$. By the definition of $h$, we get $zh=y$. Hence $y\in X_{i\alpha}h$, and thus $X_if\subseteq X_{i\alpha}h$.

\vspace{0.05cm}
To prove $X_ih\subseteq X_{i\beta}f$, let $y\in X_ih$. Then $yh^{-1}\cap X_i\neq \varnothing$, and so $yh^{-1}\in \pi_{X_i}(h)$. Since $\pi(h)=\pi(g)$, it follows easily that $\pi_{X_i}(h)=\pi_{X_i}(g)$. Therefore $yh^{-1}\in \pi_{X_i}(g)$. By (iii), we then have $(yh^{-1})\varphi^{-1}\in \pi_{X_{i\beta}}(f)$, and so $(yh^{-1})\varphi^{-1}\cap X_{i\beta}\neq \varnothing$. Choose $z\in (yh^{-1})\varphi^{-1}\cap X_{i\beta}$. Then $zff^{-1} \cap X_{i\beta}\neq \varnothing$, and so $zff^{-1}\in\pi_{X_{i\beta}}(f)$. Therefore, since $z\in zff^{-1}$ and $z\in (yh^{-1})\varphi^{-1}$, we obtain $(yh^{-1})\varphi^{-1}=zff^{-1}$. This gives $\big((yh^{-1})\varphi^{-1}\big)f=\{zf\}$. By the definition of $h$, we get $zf=y$. Hence $y\in X_{i\beta}f$, and thus $X_ih\subseteq X_{i\beta}f$. Thus $(f,h)\in \mathcal{L}$ in $T_{\mathbb{S}(I)}(X,\mathcal{P})$ by Theorem \ref{l-green}. Hence, since $\mathcal{L}\circ \mathcal{R}= \mathcal{D}$, we conclude that $(f,g)\in \mathcal{D}$ in $T_{\mathbb{S}(I)}(X,\mathcal{P})$.
\end{proof}
\vspace{0.1cm}
In 2005, Pei \cite[Theorem 3.4]{pei-ca05} determined the relation $\mathcal{D}$ on $T(X,\mathcal{P})$. Sangkhanan and Sanwong \cite[Corollary 3.10]{sanwong-sf20} also obtained a different description of $\mathcal{D}$ on $T(X,\mathcal{P})$. Using Theorem \ref{d-green}, we obtain a new characterization of $\mathcal{D}$ on $T(X,\mathcal{P})$ as follows.

\begin{proposition}
Let $f,g\in T(X,\mathcal{P})$. Then $(f,g)\in \mathcal{D}$ in $T(X,\mathcal{P})$ if and only if there exist $\gamma\in T(I)$ and a bijection $\varphi\colon \pi(f)\to \pi(g)$ such that:
\begin{enumerate}
	\item[\rm(i)] $\left(\chi^{(f)},\gamma\right)\in \mathcal{L}$ and $\left(\gamma,\chi^{(g)}\right)\in \mathcal{R}$ in $T(I)$;
	\item[\rm(ii)] for each $i\in I$, there exist $j\in \left(i\chi^{(f)}\right)\gamma^{-1}$ and $k\in (i\gamma)\left(\chi^{(f)}\right)^{-1}$ such that $\pi_{X_i}(f)\varphi\subseteq \pi_{X_j}(g)$ and $\pi_{X_i}(g)\varphi^{-1}\subseteq \pi_{X_k}(f)$.
\end{enumerate}
\end{proposition}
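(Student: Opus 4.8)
The plan is to deduce this statement directly from Theorem~\ref{d-green}, applied with $\mathbb{S}(I)=T(I)$ and the identification $T(X,\mathcal{P})=T_{T(I)}(X,\mathcal{P})$. The key observation will be that $T(I)$ is closed under taking arbitrary self-maps of $I$, so the two auxiliary transformations $\alpha,\beta\in\mathbb{S}(I)$ occurring in Theorem~\ref{d-green} can be encoded, coordinate by coordinate, as the index choices $j,k$ appearing above, and conversely reconstructed from any such family of choices. I will also use the description of $\mathcal{L}$ on $T(I)$ from Theorem~\ref{green-Tx}(1), namely $(\sigma,\tau)\in\mathcal{L}$ in $T(I)$ if and only if $I\sigma=I\tau$.

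For the forward direction, I would start from $(f,g)\in\mathcal{D}$ in $T(X,\mathcal{P})$ and invoke Theorem~\ref{d-green} to obtain $\alpha,\beta,\gamma\in T(I)$ and a bijection $\varphi\colon\pi(f)\to\pi(g)$ with $\chi^{(f)}=\alpha\gamma$, $\gamma=\beta\chi^{(f)}$, $(\gamma,\chi^{(g)})\in\mathcal{R}$ in $T(I)$, together with the inclusions $\pi_{X_i}(f)\varphi\subseteq\pi_{X_{i\alpha}}(g)$ and $\pi_{X_i}(g)\varphi^{-1}\subseteq\pi_{X_{i\beta}}(f)$ for all $i\in I$. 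From the two one-sided factorizations I would read off $I\chi^{(f)}\subseteq I\gamma$ and $I\gamma\subseteq I\chi^{(f)}$, hence $(\chi^{(f)},\gamma)\in\mathcal{L}$ in $T(I)$, which together with the $\mathcal{R}$-relation gives condition~(i) above. For condition~(ii), I would fix $i\in I$, set $j:=i\alpha$ and $k:=i\beta$, and check $j\gamma=i(\alpha\gamma)=i\chi^{(f)}$ and $k\chi^{(f)}=i(\beta\chi^{(f)})=i\gamma$, so that $j\in(i\chi^{(f)})\gamma^{-1}$ and $k\in(i\gamma)(\chi^{(f)})^{-1}$; the two inclusions above, read at this $i$, are exactly the required containments $\pi_{X_i}(f)\varphi\subseteq\pi_{X_j}(g)$ and $\pi_{X_i}(g)\varphi^{-1}\subseteq\pi_{X_k}(f)$.

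For the converse, given $\gamma\in T(I)$ and a bijection $\varphi\colon\pi(f)\to\pi(g)$ satisfying (i) and (ii), I would, for each $i\in I$, use (ii) to fix $j_i\in(i\chi^{(f)})\gamma^{-1}$ and $k_i\in(i\gamma)(\chi^{(f)})^{-1}$ with $\pi_{X_i}(f)\varphi\subseteq\pi_{X_{j_i}}(g)$ and $\pi_{X_i}(g)\varphi^{-1}\subseteq\pi_{X_{k_i}}(f)$, and then define $\alpha,\beta\in T(I)$ by $i\alpha=j_i$ and $i\beta=k_i$. A short check gives $i(\alpha\gamma)=j_i\gamma=i\chi^{(f)}$ and $i(\beta\chi^{(f)})=k_i\chi^{(f)}=i\gamma$ for all $i\in I$, so $\chi^{(f)}=\alpha\gamma$ and $\gamma=\beta\chi^{(f)}$; since $(\gamma,\chi^{(g)})\in\mathcal{R}$ in $T(I)$ is part of (i), condition~(i) of Theorem~\ref{d-green} holds, and the defining properties of $j_i,k_i$ yield conditions~(ii) and~(iii) of Theorem~\ref{d-green}. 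Hence $(f,g)\in\mathcal{D}$ in $T_{T(I)}(X,\mathcal{P})=T(X,\mathcal{P})$ by Theorem~\ref{d-green}.

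I do not anticipate a genuine obstacle: the whole argument is a translation between the general case and the full case. The one point to keep in mind is that passing between the global maps $\alpha,\beta$ and the pointwise index choices $j,k$ relies essentially on the fullness of $T(I)$ (which is precisely why the statement is phrased with ``there exist $j,k$'' rather than with two extra transformations), and that the $\mathcal{L}$-relation in (i) is exactly the symmetrised form of the two factorizations $\chi^{(f)}=\alpha\gamma$ and $\gamma=\beta\chi^{(f)}$ of Theorem~\ref{d-green}. Keeping the $\pi_{X_i}(\cdot)$ bookkeeping straight is the only mildly delicate part.
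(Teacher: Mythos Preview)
Your proposal is correct and follows essentially the same approach as the paper: both directions are reduced to Theorem~\ref{d-green} with $\mathbb{S}(I)=T(I)$, setting $j=i\alpha$, $k=i\beta$ in the forward direction and reconstructing $\alpha,\beta$ pointwise from the choices $j_i,k_i$ in the converse. The only cosmetic difference is that you spell out the verification of $(\chi^{(f)},\gamma)\in\mathcal{L}$ via $I\chi^{(f)}=I\gamma$, whereas the paper simply asserts that condition~(i) follows immediately from Theorem~\ref{d-green}(i).
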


\begin{proof}[\textbf{Proof}]
Assume that $(f,g)\in \mathcal{D}$ in $T(X,\mathcal{P})$. By Theorem \ref{d-green}, since $T(X,\mathcal{P}) = T_{T(I)}(X,\mathcal{P})$, there exist $\alpha,\beta,\gamma\in T(I)$ and a bijection $\varphi\colon \pi(f)\to \pi(g)$ such that each condition of Theorem \ref{d-green} holds. Thus \rm(i) follows immediately from Theorem \ref{d-green}\rm(i).
	
\vspace{0.05cm}
To prove \rm(ii), let $i\in I$. Write $i\alpha=j$ and $i\beta=k$. First, we show that $j\in \left(i\chi^{(f)}\right)\gamma^{-1}$. By Theorem \ref{d-green}$\rm(i)$, we have $\chi^{(f)}=\alpha\gamma$. Therefore $i\chi^{(f)}=i (\alpha\gamma) = (i\alpha)\gamma=j\gamma$, which gives $j\in \left(i\chi^{(f)}\right)\gamma^{-1}$. Moreover, by Theorem \ref{d-green}$\rm(ii)$ we deduce that $\pi_{X_i}(f)\varphi\subseteq \pi_{X_j}(g)$. Next by Theorem \ref{d-green}$\rm(i)$, we have $\gamma=\beta\chi^{(f)}$. A similar argument using $\gamma=\beta\chi^{(f)}$ establishes that $k\in (i\gamma)\left(\chi^{(f)}\right)^{-1}$. Then by Theorem \ref{d-green}(iii), we deduce that $\pi_{X_i}(g)\varphi^{-1}\subseteq \pi_{X_k}(f)$. Thus \rm(ii) holds.
	
\vspace{0.1cm}

	Conversely, assume that the given conditions hold. Note from \rm(i) that  $\left(\chi^{(f)},\gamma\right)\in \mathcal{L}$. For each $i\in I$, by \rm(ii) we fix $j_i\in \left(i\chi^{(f)}\right)\gamma^{-1}$ and $k_i\in (i\gamma)\left(\chi^{(f)}\right)^{-1}$ such that $\pi_{X_i}(f)\varphi\subseteq \pi_{X_{j_i}}(g)$ and $\pi_{X_i}(g)\varphi^{-1}\subseteq \pi_{X_{k_i}}(f)$. Define $\alpha,\beta\in T(I)$ given by $i\alpha=j_i$ and $i\beta=k_i$. Now we show that $\chi^{(f)}=\alpha\gamma$. For this, let $i\in I$. Since $j_i\in \left(i\chi^{(f)}\right)\gamma^{-1}$, we obtain $i\chi^{(f)} = j_i\gamma = (i\alpha)\gamma = i(\alpha\gamma)$, which yields $\chi^{(f)}=\alpha\gamma$. We can similarly prove that $\gamma=\beta\chi^{(f)}$, since $k_i\in (i\gamma)\left(\chi^{(f)}\right)^{-1}$. By \rm(i), we have $(\gamma,\chi^{(g)})\in \mathcal{R}$ in $T(I)$. Thus Theorem \ref{d-green}\rm(i) holds. By the definition of $\alpha$ and $\beta$, we also deduce that $\pi_{X_i}(f)\varphi\subseteq \pi_{X_{i\alpha}}(g)$ for all $i\in I$, and $\pi_{X_i}(g)\varphi^{-1}\subseteq \pi_{X_{i\beta}}(f)$ for all $i\in I$. Thus each condition of Theorem \ref{d-green} holds. Hence, since $T_{T(I)}(X,\mathcal{P})=T(X,\mathcal{P})$, we conclude from Theorem \ref{d-green} that $(f,g)\in \mathcal{D}$ in $T(X,\mathcal{P})$.
\end{proof}



For every $x\in X$, denote by $\bar{x}$ the block of $\mathcal{P}$ that contains $x$. Thus $\bar{x}=X_i$ for some $i\in I$.

\vspace{0.05cm}
We require the following result to characterize the relation $\mathcal{J}$ on $T_{\mathbb{S}(I)}(X,\mathcal{P})$.

\begin{lemma}\label{j-lemma}
Let $f,g\in T_{\mathbb{S}(I)}(X,\mathcal{P})$. Then $J_f\leq J_g$ in $T_{\mathbb{S}(I)}(X,\mathcal{P})$ if and only if there exist $\alpha,\beta\in \mathbb{S}(I)$ and a mapping $\varphi\colon Xg\to X$ such that:
\begin{enumerate}
\item[\rm(i)] $X_if\subseteq (X_{i\alpha}g)\varphi$ for all $i\in I$;
\item[\rm(ii)] $(X_i\cap Xg)\varphi\subseteq X_{i\beta}$ for all $i\in I\chi^{(g)}$.
\end{enumerate}
\end{lemma}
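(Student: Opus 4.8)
The plan is to unwind the order on $\mathcal{J}$-classes. Since $\mathbb{S}(I)$ contains $\id_I$, the semigroup $T_{\mathbb{S}(I)}(X,\mathcal{P})$ has identity $\id_X$, so $J_f\le J_g$ (i.e.\ $f$ lies in the principal two-sided ideal generated by $g$) holds precisely when $f=agb$ for some $a,b\in T_{\mathbb{S}(I)}(X,\mathcal{P})$. Thus both directions amount to translating between such a factorization $f=agb$ and the triple $(\alpha,\beta,\varphi)$.

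For the forward implication, suppose $f=agb$. I would put $\alpha=\chi^{(a)}$ and $\beta=\chi^{(b)}$---both in $\mathbb{S}(I)$ because $a,b\in T_{\mathbb{S}(I)}(X,\mathcal{P})$---and take $\varphi=b_{\upharpoonright Xg}\colon Xg\to X$. Condition $\rm(i)$ then follows from $X_if=(X_ia)gb\subseteq (X_{i\alpha}g)b=(X_{i\alpha}g)\varphi$, using $X_ia\subseteq X_{i\alpha}$ and $X_{i\alpha}g\subseteq Xg$. For $\rm(ii)$, if $i\in I\chi^{(g)}$ then $(X_i\cap Xg)\varphi=(X_i\cap Xg)b\subseteq X_ib\subseteq X_{i\beta}$; this step is routine and in fact needs no restriction on $i$.

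For the converse, given $\alpha,\beta\in \mathbb{S}(I)$ and $\varphi\colon Xg\to X$ satisfying $\rm(i)$ and $\rm(ii)$, I would construct $a,b\in T_{\mathbb{S}(I)}(X,\mathcal{P})$ with $\chi^{(a)}=\alpha$, $\chi^{(b)}=\beta$, and $agb=f$. The key preliminary observation is that $X_jg\subseteq X_{j\chi^{(g)}}$ for every $j\in I$, so $Xg\subseteq\bigcup_{k\in I\chi^{(g)}}X_k$ and hence $X_j\cap Xg=\varnothing$ whenever $j\notin I\chi^{(g)}$; this is exactly why $\rm(ii)$ is imposed only for indices in $I\chi^{(g)}$. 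Using this, I would define $b\in T(X)$ by letting $b$ agree with $\varphi$ on $Xg$ and sending, for each $j\in I$, every point of $X_j\setminus Xg$ to a fixed element $x_{j\beta}\in X_{j\beta}$; then $X_jb\subseteq X_{j\beta}$ for all $j$---by $\rm(ii)$ when $j\in I\chi^{(g)}$, and trivially otherwise---so $\chi^{(b)}=\beta$ and $b\in T_{\mathbb{S}(I)}(X,\mathcal{P})$. Next, for each $i\in I$ and $x\in X_i$, condition $\rm(i)$ gives $xf\in (X_{i\alpha}g)\varphi$, so I may select $y\in X_{i\alpha}$ with $(yg)\varphi=xf$ and set $xa=y$; this yields $X_ia\subseteq X_{i\alpha}$, so $\chi^{(a)}=\alpha$ and $a\in T_{\mathbb{S}(I)}(X,\mathcal{P})$. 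Finally, for $x\in X_i$ one checks $x(agb)=((xa)g)b=(yg)b$, and since $yg\in X_{i\alpha}g\subseteq Xg$ lies in a block indexed by $I\chi^{(g)}$, we get $(yg)b=(yg)\varphi=xf$; hence $agb=f$, so $f\in T_{\mathbb{S}(I)}(X,\mathcal{P})\,g\,T_{\mathbb{S}(I)}(X,\mathcal{P})$ and $J_f\le J_g$.

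The main obstacle is the converse construction, specifically making the single map $b$ serve two purposes at once: extend the given $\varphi$ on $Xg$ while respecting the character $\beta$ on every block of $\mathcal{P}$. This is resolved by the remark that $Xg$ meets only the blocks indexed by $I\chi^{(g)}$, on which $\rm(ii)$ controls the image of $\varphi$, whereas on the remaining blocks $b$ may be defined freely; thereafter, verifying $\chi^{(a)}=\alpha$, $\chi^{(b)}=\beta$ and $agb=f$ is a routine check (the axiom of choice enters only in selecting the $y$'s and the elements $x_{j\beta}$).
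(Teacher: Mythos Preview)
Your proof is correct and follows essentially the same approach as the paper: both directions translate between a factorization $f=agb$ (the paper writes $h_1,h_2$ in place of your $a,b$) and the data $(\alpha,\beta,\varphi)$, with the converse constructions identical. Your forward direction is in fact cleaner: you take $\varphi=b_{\upharpoonright Xg}$ directly, whereas the paper defines $\varphi$ by a three-case formula distinguishing points of $Xg$ according to whether they lie in $X(h_1g)$ or not---an unnecessary complication, since the simple restriction already satisfies both \rm(i) and \rm(ii).
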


\begin{proof}[\textbf{Proof}]
Assume that $J_f\leq J_g$ in $T_{\mathbb{S}(I)}(X,\mathcal{P})$. Then $f=h_1gh_2$ for some $h_1,h_2\in T_{\mathbb{S}(I)}(X,\mathcal{P})$. This gives $\chi^{(f)}=\chi^{(h_1gh_2)}=\chi^{(h_1)}\chi^{(g)}\chi^{(h_2)}$ by Lemma \ref{rak-lemma-2.3}. Write $\chi^{(h_1)}=\alpha$ and $\chi^{(h_2)}=\beta$. 

\vspace{0.05cm}
If $Xg\setminus X(h_1g)\neq \varnothing$, then choose $y\in \bar{x}\cap X(h_1g)$ for each $x\in Xg\setminus X(h_1g)$ with $\bar{x}\cap X(h_1g)\neq \varnothing$; choose $z\in X_{i\beta}$ for each $x\in Xg\setminus X(h_1g)$ with $\bar{x}\cap X(h_1g)=\varnothing$, where $\bar{x}=X_i$ for some $i\in I$. Define a mapping $\varphi \colon Xg\to X$ by:
\begin{eqnarray*}
x\varphi=
\begin{cases}
	xh_2  &\text{if $x\in X(h_1g)$};\\
	yh_2 &\text{if $x\in Xg\setminus X(h_1g)$ and $\bar{x}\cap X(h_1g)\neq\varnothing$};\\
	z   &\text{if $x\in Xg\setminus X(h_1g)$ and $\bar{x}\cap X(h_1g)=\varnothing$}.
\end{cases}		
\end{eqnarray*}
To prove \rm(i), let $i\in I$. Note that $X_i(h_1g)\subseteq X(h_1g)$, and so $x\varphi=xh_2$ for all $x\in X_i(h_1g)$ by the definition of $\varphi$. Therefore, since $f=h_1gh_2$, we obtain
\[X_if=X_i(h_1gh_2)=(X_i(h_1g))h_2=(X_i(h_1g))\varphi =((X_ih_1)g)\varphi\subseteq (X_{i\alpha}g)\varphi.\]
Thus $\rm(i)$ holds. To prove \rm(ii), let $i\in I\chi^{(g)}$ and $x\in X_i\cap Xg$. Then there are two cases:

\vspace{0.02cm}
\noindent \textbf{Case 1:} Suppose $x\in X_i\cap X(h_1g)$. Then $x\in X(h_1g)$. Therefore, since $\chi^{(h_2)}=\beta$, we obtain $x\varphi=xh_2\in X_{i\beta}$.

\vspace{0.02cm}
\noindent \textbf{Case 2:} Suppose $x\in Xg\setminus X(h_1g)$. Then $x\varphi=yh_2 \in X_{i\beta}$ or $x\varphi=z\in X_{i\beta}\in X_{i\beta}$.

\vspace{0.05cm}
Thus, in either case, we have $x\varphi\in X_{i\beta}$. Therefore $(X_i\cap Xg)\varphi\subseteq X_{i\beta}$, and thus $\rm(ii)$ holds.
	
\vspace{0.2mm}
Conversely, assume that the given conditions hold. To prove $J_f\leq J_g$ in $T_{\mathbb{S}(I)}(X,\mathcal{P})$, we will construct $h_1, h_2 \in T_{\mathbb{S}(I)}(X,\mathcal{P})$ such that $f=h_1gh_2$. To define $h_1\in T(X)$, let $i\in I$. For each $x\in X_i$, by $\rm(i)$ we fix $y\in X_{i\alpha}$ such that $xf=y(g\varphi)$. Define $h_1\in T(X)$ by $xh_1=y$ whenever $x\in X_i$. It is easy to verify that $\chi^{(h_1)}=\alpha$, and so $h_1\in T_{\mathbb{S}(I)}(X,\mathcal{P})$.
	
\vspace{1mm}
Next for each $i\in I$, we fix $x_i\in X_i$. Define $h_2\in T(X)$ by:
\begin{eqnarray*}
xh_2=
\begin{cases}
	x\varphi      & \text{if $x\in X_i\cap Xg$, where $i\in I\chi^{(g)}$};\\
	x_{i\beta} & \text{if $x\in X_i\setminus Xg$, where $i\in I\chi^{(g)}$};\\
	x_{j\beta} & \text{if $x\in X_j$, where $j\in I\setminus I\chi^{(g)}$}.
\end{cases}		
\end{eqnarray*}
By \rm(ii) and the definition of $h_2$, it is easy to see that $\chi^{(h_2)}=\beta$, and so $h_2\in T_{\mathbb{S}(I)}(X,\mathcal{P})$. To prove $f=h_1gh_2$, let $x\in X$. Then $x\in X_i$ for some $i\in I$. Note that $xh_1=y$ and $xf=y(g\varphi)$. Therefore $x(h_1gh_2)=(xh_1)gh_2=(yg)h_2 = (yg)\varphi=xf$, which yields $f=h_1gh_2$. Hence $J_f\leq J_g$ in $T_{\mathbb{S}(I)}(X,\mathcal{P})$.
\end{proof}		

\begin{theorem}\label{j-green}
Let $f,g\in T_{\mathbb{S}(I)}(X,\mathcal{P})$. Then $(f,g)\in \mathcal{J}$ in $T_{\mathbb{S}(I)}(X,\mathcal{P})$ if and only if there exist $\alpha,\beta,\gamma,\delta\in \mathbb{S}(I)$ and mappings $\varphi\colon Xg\to X$, $\psi\colon Xf\to X$ such that:
\begin{enumerate}

\item[\rm(i)] $X_if\subseteq (X_{i\alpha}g)\varphi$ and $X_ig\subseteq (X_{i\gamma}f)\psi$  for all $i\in I$;
	
	\item[\rm(ii)] $(X_i\cap Xg)\varphi\subseteq X_{i\beta}$ for all $i\in I\chi^{(g)}$, and $(X_j\cap Xf)\psi\subseteq X_{j\delta}$ for all $j\in I\chi^{(f)}$.
\end{enumerate}
\end{theorem}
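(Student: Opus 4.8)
The plan is to reduce everything to Lemma~\ref{j-lemma}. Recall that in any semigroup $(f,g)\in\mathcal{J}$ if and only if $J_f\le J_g$ and $J_g\le J_f$. So the whole statement will follow by applying Lemma~\ref{j-lemma} twice, once to each of these two one-sided conditions, and then simply relabelling the resulting data.

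Concretely, for the forward direction I would assume $(f,g)\in\mathcal{J}$ in $T_{\mathbb{S}(I)}(X,\mathcal{P})$, so that $J_f\le J_g$ and $J_g\le J_f$. Applying Lemma~\ref{j-lemma} to $J_f\le J_g$ produces $\alpha,\beta\in\mathbb{S}(I)$ and a mapping $\varphi\colon Xg\to X$ with $X_if\subseteq(X_{i\alpha}g)\varphi$ for all $i\in I$ and $(X_i\cap Xg)\varphi\subseteq X_{i\beta}$ for all $i\in I\chi^{(g)}$. Applying Lemma~\ref{j-lemma} to $J_g\le J_f$ (that is, with the roles of $f$ and $g$ interchanged) produces $\gamma,\delta\in\mathbb{S}(I)$ and a mapping $\psi\colon Xf\to X$ with $X_ig\subseteq(X_{i\gamma}f)\psi$ for all $i\in I$ and $(X_j\cap Xf)\psi\subseteq X_{j\delta}$ for all $j\in I\chi^{(f)}$. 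Collecting these four maps and two index-mappings is exactly conditions~\rm(i) and~\rm(ii) of the theorem.

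For the converse, I would assume $\alpha,\beta,\gamma,\delta\in\mathbb{S}(I)$ and $\varphi,\psi$ are given as in the statement, split them into the two natural halves, and invoke the ``if'' part of Lemma~\ref{j-lemma}: the pair $(\alpha,\beta,\varphi)$ witnesses $J_f\le J_g$, and the pair $(\gamma,\delta,\psi)$ witnesses $J_g\le J_f$. Hence $J_f=J_g$, i.e.\ $(f,g)\in\mathcal{J}$ in $T_{\mathbb{S}(I)}(X,\mathcal{P})$.

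There is essentially no obstacle here beyond bookkeeping; the only point requiring a little care is keeping the index sets straight ($I\chi^{(g)}$ in the condition coming from $J_f\le J_g$ versus $I\chi^{(f)}$ in the one coming from $J_g\le J_f$) and making sure the domain of $\psi$ is $Xf$ (not $Xg$) when the roles are swapped, which matches the statement of Lemma~\ref{j-lemma} after the interchange.

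\begin{proof}[\textbf{Proof}]
By definition, $(f,g)\in\mathcal{J}$ in $T_{\mathbb{S}(I)}(X,\mathcal{P})$ if and only if $J_f\le J_g$ and $J_g\le J_f$ in $T_{\mathbb{S}(I)}(X,\mathcal{P})$.

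Assume that $(f,g)\in\mathcal{J}$ in $T_{\mathbb{S}(I)}(X,\mathcal{P})$. Since $J_f\le J_g$, by Lemma~\ref{j-lemma} there exist $\alpha,\beta\in\mathbb{S}(I)$ and a mapping $\varphi\colon Xg\to X$ such that $X_if\subseteq(X_{i\alpha}g)\varphi$ for all $i\in I$ and $(X_i\cap Xg)\varphi\subseteq X_{i\beta}$ for all $i\in I\chi^{(g)}$. Similarly, since $J_g\le J_f$, applying Lemma~\ref{j-lemma} with the roles of $f$ and $g$ interchanged yields $\gamma,\delta\in\mathbb{S}(I)$ and a mapping $\psi\colon Xf\to X$ such that $X_ig\subseteq(X_{i\gamma}f)\psi$ for all $i\in I$ and $(X_j\cap Xf)\psi\subseteq X_{j\delta}$ for all $j\in I\chi^{(f)}$. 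Thus conditions~\rm(i) and~\rm(ii) hold.

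Conversely, assume that there exist $\alpha,\beta,\gamma,\delta\in\mathbb{S}(I)$ and mappings $\varphi\colon Xg\to X$, $\psi\colon Xf\to X$ satisfying~\rm(i) and~\rm(ii). Then $\alpha,\beta$ and $\varphi$ satisfy the two conditions of Lemma~\ref{j-lemma}, so $J_f\le J_g$ in $T_{\mathbb{S}(I)}(X,\mathcal{P})$. Likewise, $\gamma,\delta$ and $\psi$ satisfy the two conditions of Lemma~\ref{j-lemma} with $f$ and $g$ interchanged, so $J_g\le J_f$ in $T_{\mathbb{S}(I)}(X,\mathcal{P})$. Hence $(f,g)\in\mathcal{J}$ in $T_{\mathbb{S}(I)}(X,\mathcal{P})$.
\end{proof}
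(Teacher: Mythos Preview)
Your proof is correct and follows exactly the approach the paper takes: the paper's own proof consists of the single sentence ``It follows directly from Lemma~\ref{j-lemma},'' and you have simply spelled out the obvious bookkeeping behind that sentence.
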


\begin{proof}[\textbf{Proof}]
It follows directly from Lemma \ref{j-lemma}.	
\end{proof}


Let $Y$ and $Z$ be nonempty subsets of $X$, and let $\varphi\colon Y\to Z$ be a mapping. We say that $\varphi$ is \textit{$E$-preserving} if for all $x,y\in Y$, $(x, y)\in E$ implies that $(x\varphi,y\varphi)\in E$ (cf. \cite[p. 112]{pei-ca05}). Equivalently, the mapping $\varphi$ is $E$-preserving if and only if for each $X_i\in \mathcal{P}$, there exists $X_j\in \mathcal{P}$ such that $(X_i\cap Y)\varphi\subseteq X_j\cap Z$.

\vspace{1.0mm}
In 2005, Pei \cite[Theorem 3.8]{pei-ca05} determined the relation $\mathcal{J}$ on $T(X,\mathcal{P})$ for a specific case. After that in 2007, Pei et al. \cite[Theorem 3.8]{pei-ca07} gave a complete description of the relation $\mathcal{J}$ on $T(X,\mathcal{P})$. Sangkhanan and Sanwong \cite[Corollary 3.16]{sanwong-sf20} also obtained the same description of the relation $\mathcal{J}$ on $T(X,\mathcal{P})$. Using Theorem \ref{j-green}, we find a more concrete characterization of the relation $\mathcal{J}$ on $T(X,\mathcal{P})$ as follows.


\begin{proposition}
Let $f,g\in T(X,\mathcal{P})$. Then $(f,g)\in \mathcal{J}$ in $T(X,\mathcal{P})$ if and only if there exist $E$-preserving mappings $\varphi\colon Xg\to X$ and $\psi\colon Xf\to X$ such that for each $i\in I$, there exist $j,k\in I$ such that $X_if\subseteq (X_jg)\varphi$ and $X_ig\subseteq (X_kf)\psi$.
\end{proposition}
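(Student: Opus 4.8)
The plan is to obtain this as a direct consequence of Theorem \ref{j-green} applied with $\mathbb{S}(I)=T(I)$, using the identification $T(X,\mathcal{P})=T_{T(I)}(X,\mathcal{P})$, together with the reformulation of ``$E$-preserving'' recorded just before the statement: a mapping $\varphi\colon Xg\to X$ is $E$-preserving if and only if for each $i\in I$ there exists $j\in I$ with $(X_i\cap Xg)\varphi\subseteq X_j$. The single point that needs attention throughout is that the conditions in Theorem \ref{j-green}(ii) are quantified over $I\chi^{(g)}$ and $I\chi^{(f)}$, whereas the notion of $E$-preserving quantifies over all of $I$; these are reconciled by the observation that $Xg\subseteq \bigcup_{i\in I\chi^{(g)}}X_i$, since $X_ig\subseteq X_{i\chi^{(g)}}$ for every $i$, so that $X_i\cap Xg=\varnothing$ whenever $i\notin I\chi^{(g)}$ (and symmetrically for $f$).

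For the forward implication, suppose $(f,g)\in \mathcal{J}$ in $T(X,\mathcal{P})$. Theorem \ref{j-green} supplies $\alpha,\beta,\gamma,\delta\in T(I)$ and mappings $\varphi\colon Xg\to X$, $\psi\colon Xf\to X$ satisfying conditions (i) and (ii) there. Taking $j=i\alpha$ and $k=i\gamma$ for each $i\in I$, condition (i) gives exactly $X_if\subseteq (X_jg)\varphi$ and $X_ig\subseteq (X_kf)\psi$. To see $\varphi$ is $E$-preserving, note that for $i\notin I\chi^{(g)}$ the set $X_i\cap Xg$ is empty, so the requirement holds vacuously, while for $i\in I\chi^{(g)}$ condition (ii) gives $(X_i\cap Xg)\varphi\subseteq X_{i\beta}$, so one may take $j=i\beta$. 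The identical argument with $\psi$, $\delta$, and $\chi^{(f)}$ shows $\psi$ is $E$-preserving.

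For the converse, suppose $E$-preserving maps $\varphi\colon Xg\to X$ and $\psi\colon Xf\to X$ are given such that for each $i\in I$ there are $j,k\in I$ with $X_if\subseteq (X_jg)\varphi$ and $X_ig\subseteq (X_kf)\psi$. For each $i$ fix such indices $j_i,k_i$ and define $\alpha,\gamma\in T(I)$ by $i\alpha=j_i$ and $i\gamma=k_i$; then condition (i) of Theorem \ref{j-green} holds. Since $\varphi$ is $E$-preserving, for each $i\in I$ choose $\ell_i\in I$ with $(X_i\cap Xg)\varphi\subseteq X_{\ell_i}$ (for $i\notin I\chi^{(g)}$ any choice is admissible, as $X_i\cap Xg=\varnothing$), and define $\beta\in T(I)$ by $i\beta=\ell_i$; this yields $(X_i\cap Xg)\varphi\subseteq X_{i\beta}$ for all $i\in I\chi^{(g)}$. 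Define $\delta\in T(I)$ from $\psi$ in the same way. All hypotheses of Theorem \ref{j-green} are now met, so $(f,g)\in \mathcal{J}$ in $T_{T(I)}(X,\mathcal{P})=T(X,\mathcal{P})$.

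I do not expect a genuine obstacle here: Theorem \ref{j-green} already carries the weight, and what remains is the bookkeeping of extending the index maps to total functions on $I$ and translating between the two presentations of $E$-preserving maps. The only mildly delicate step is the emptiness observation above, which must be invoked in both directions to pass between the quantifier over $I\chi^{(g)}$ (resp. $I\chi^{(f)}$) in Theorem \ref{j-green}(ii) and the quantifier over all of $I$ in the definition of an $E$-preserving mapping.
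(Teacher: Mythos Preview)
Your proposal is correct and follows essentially the same approach as the paper: both directions reduce to Theorem~\ref{j-green} with $\mathbb{S}(I)=T(I)$, translating the existence of $\beta,\delta\in T(I)$ in condition~(ii) into the $E$-preserving property of $\varphi,\psi$ via the emptiness observation $X_i\cap Xg=\varnothing$ for $i\notin I\chi^{(g)}$. The paper's converse additionally verifies $\chi^{(f)}=\alpha\chi^{(g)}\beta$ and $\chi^{(g)}=\gamma\chi^{(f)}\delta$, but these identities are not among the hypotheses of Theorem~\ref{j-green}, so your omission of them is harmless.
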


\begin{proof}[\textbf{Proof}]
Assume that $(f,g)\in \mathcal{J}$ in $T(X,\mathcal{P})$. By Theorem \ref{j-green}, since $T(X,\mathcal{P})=T_{T(I)}(X,\mathcal{P})$, there exist $\alpha,\beta,\gamma,\delta\in T(I)$ and mappings $\varphi\colon Xg\to X$, $\psi\colon Xf\to X$ such that each condition of Theorem \ref{j-green} hold. By Theorem \ref{j-green}\rm(ii), it is clear that both mappings $\varphi$ and $\psi$ are $E$-preserving. To prove the remaining parts, let $i\in I$. Write $i\alpha=j$ and $i\beta=k$. By Theorem \ref{j-green}\rm(i), we then deduce that $X_if\subseteq (X_jg)\varphi$ and $X_ig\subseteq (X_kf)\psi$.

\vspace{0.2mm}
Conversely, assume that the given conditions hold. For each $i\in I$, by hypothesis we choose $j_i\in I$ such that $X_if\subseteq (X_{j_i}g)\varphi$. Define $\alpha\in T(I)$ by $i\alpha=j_i$. Then by hypothesis, we have $X_if\subseteq (X_{i\alpha}g)\varphi$ for all $i\in I$.

\vspace{0.2mm}
Note by hypothesis that the mapping $\varphi$ is $E$-preserving. Therefore for each $i\in I\chi^{(g)}$, there exists unique $i'\in I$ such that $(X_i\cap Xg)\varphi\subseteq X_{i'}$. Define $\beta\in T(I)$ by:
\begin{eqnarray*}
i\beta=
\begin{cases}
	i'   &  \text{if $i\in I\chi^{(g)}$};\\
	i    &  \text{otherwise}.
\end{cases}
\end{eqnarray*}
It is clear from the definition of $\beta$ that $(X_i\cap Xg)\varphi\subseteq X_{i\beta}$ for all $i\in I\chi^{(g)}$. To prove $\chi^{(f)}=\alpha\chi^{(g)}\beta$, let $i\in I$. Write $i\alpha=j$ and $j\chi^{(g)}=k$. Since $k\in I\chi^{(g)}$, we get $k\beta=k'$. It follows that $(X_k\cap Xg)\varphi\subseteq X_{k'}$ by the definition of $\beta$. Since $j\chi^{(g)}=k$, we obtain
\[ X_if\subseteq (X_{i\alpha}g)\varphi=(X_jg)\varphi\subseteq (X_k\cap Xg)\varphi \subseteq X_{k'},\]
which gives $i\chi^{(f)}=k'$. Then $i(\alpha\chi^{(g)}\beta)= (i\alpha)\chi^{(g)}\beta= \left(j\chi^{(g)}\right)\beta=k\beta=k'=i\chi^{(f)}$, and therefore $\chi^{(f)}=\alpha\chi^{(g)}\beta$.

\vspace{0.2mm}
Again by hypothesis, for each $i\in I$ we choose $k_i\in I$ such that $X_ig\subseteq (X_{k_i}f)\psi$. Define $\gamma\in T(I)$ by $i\gamma=k_i$. Then by hypothesis, we have $X_ig\subseteq (X_{i\gamma}f)\psi$ for all $i\in I$.

\vspace{0.2mm}
Finally by hypothesis, the mapping $\psi$ is $E$-preserving. Therefore for each $j\in I\chi^{(f)}$, there exists unique $j'\in I$ such that $(X_j\cap Xf)\psi\subseteq X_{j'}$. Define $\delta\in T(I)$ by:
\begin{eqnarray*}
j\delta=
\begin{cases}
	j'   &  \text{if $j\in I\chi^{(f)}$};\\
	j    &  \text{otherwise}.
\end{cases}
\end{eqnarray*}
It is clear from the definition of $\delta$ that $(X_i\cap Xf)\psi\subseteq X_{i\delta}$ for all $i\in I\chi^{(f)}$. To prove $\chi^{(g)}=\gamma\chi^{(f)}\delta$, let $i\in I$. Write $i\gamma=j$ and $j\chi^{(f)}=k$. Since $k\in I\chi^{(f)}$, we get $k\delta=k'$. It follows that $(X_k\cap Xf)\psi\subseteq X_{k'}$ by the definition of $\delta$. Since $j\chi^{(f)}=k$, we obtain
\[X_ig\subseteq (X_{i\gamma}f)\psi=(X_jf)\psi\subseteq (X_k\cap Xf)\psi \subseteq X_{k'},\]
which gives $i\chi^{(g)}=k'$. Then $i\left(\gamma\chi^{(f)}\delta\right)= (i\gamma)\chi^{(f)}\delta= \left(j\chi^{(f)}\right)\delta=k\delta=k'=i\chi^{(g)}$, and therefore $\chi^{(g)}=\gamma\chi^{(f)}\delta$.

\vspace{0.2mm}
Thus, since $T_{T(I)}(X,\mathcal{P})=T(X,\mathcal{P})$, we conclude from Theorem \ref{j-green} that $(f,g)\in \mathcal{J}$ in $T(X,\mathcal{P})$.
\end{proof}	


\end{document}